\documentclass[preprint]{imsart}

\RequirePackage{amsthm,amsmath,amsfonts,amssymb}
\RequirePackage[authoryear]{natbib}
\RequirePackage[colorlinks,citecolor=blue,urlcolor=blue,backref=page,backref=page]{hyperref}
\RequirePackage{graphicx}

\usepackage{bigints}
\usepackage{chngcntr}
\usepackage{amssymb}
\usepackage{wasysym}
\usepackage{color}
\usepackage{bm}
\usepackage{extarrows}
\usepackage{pstricks}
\usepackage{makecell,rotating}
\usepackage[labelsep=quad,indention=10pt]{caption}
\usepackage[list=true]{subcaption}
\usepackage{subcaption}
\usepackage{booktabs}
\usepackage{tabularx}
\usepackage{threeparttable}
\usepackage{float}

\pubyear{2026}
\volume{TBA}
\issue{TBA}
\firstpage{1}
\lastpage{1}

\startlocaldefs
\theoremstyle{plain}

\newtheorem{theorem}{Theorem}[section]
\newtheorem{lemma}[theorem]{Lemma}
\newtheorem{corollary}[theorem]{Corollary}
\theoremstyle{definition}
\newtheorem{definition}[theorem]{Definition}

\newtheorem{remark}[theorem]{Remark}
\theoremstyle{remark}

\def\tsc#1{\csdef{#1}{\textsc{\lowercase{#1}}\xspace}}
\tsc{WGM}
\tsc{QE}

\def \R {\mathbb{R}}

\def \N {\mathcal{N}}
\def \E {\mathbb{E}}
\def \Var {\mathbb{V}\textup{ar}}

\def \bbeta {\bm{\eta}}
\def \btheta {\bm{\theta}}
\def \P {\mathbb{P}}
\def \O {\mathcal{O}}
\def \bY {\mathbf{Y}}
\def \G {\mathcal{G}}
\def \X {\mathcal{X}}
\def \F {\mathcal{F}}
\def \bu {\mathbf{u}}
\endlocaldefs

\begin{document}

\begin{frontmatter}
\title{The minimax optimal convergence rate of posterior density in the weighted orthogonal polynomials}
\runtitle{The minimax optimal convergence rate of posterior density}

\begin{aug}
\author[Y]{\fnms{Yiqi}~\snm{Luo}\ead[label=e1]{luoyiqi1999@163.com}}
\and
\author[Y,X,Z]{\fnms{Xue}~\snm{Luo}\ead[label=e2]{xluo@buaa.edu.cn}\orcid{0000-0003-1187-7599}}

\address[Z] {Corresponding author}
\address[Y]{School of Mathematical Sciences, Beihang University (Shahe Campus), Changping district, Beijing, 102206, P. R. China\printead[presep={,\ }]{e1}}

\address[X]{Key Laboratory of Mathematics, Informatics and Behavioral Semantics (LMIB), Haidian district, Beijing, 100191, P. R. China\printead[presep={,\ }]{e2}}

\runauthor{Y. Luo et al.}
\end{aug}

\begin{abstract}
We investigate Bayesian nonparametric density estimation via orthogonal polynomial expansions in weighted Sobolev spaces. A core challenge is establishing minimax optimal posterior convergence rates, especially for densities on unbounded domains without a strictly positive lower bound.
For densities bounded away from zero, we give sufficient conditions under which the framework of \cite{shen2001} applies directly. For densities lacking a positive lower bound, the equivalence between Hellinger and weighted $L_2$-norm distance fails, invalidating the original theory.
We propose a novel shifting method that lifts the true density $g_0$ to a sequence of proxy densities $g_{0,n}$. We prove a modified convergence theorem applicable to these shifted densities, preserving the optimal rate. We also construct a Gaussian sieve prior that achieves the minimax rate $\varepsilon_n=n^{-p/(2p+1)}$ for any integer $p\geq1$.
Numerical results confirm that our estimator approximates the true density well and validates the theoretical convergence rate.
\end{abstract}

\begin{keyword}[class=MSC]
\kwd[Primary ]{62G07}
\kwd[; secondary ]{62G20}
\end{keyword}

\begin{keyword}
\kwd{Bayesian inference}
\kwd{convergence rate}
\kwd{sieve prior}
\kwd{orthogonal polynomial expansion}
\end{keyword}

\end{frontmatter}

\section{Introduction}

Bayesian nonparametric density estimation provides a flexible framework for modeling complex data without restrictive parametric assumptions. Given independent observations $Y^n=(Y_1,\ldots,Y_n)$ drawn from an unknown density $g_0$, a fundamental theoretical question in Bayesian nonparametric concerns the frequentist properties of posterior distributions induced by infinite-dimensional priors.
In particular, one seeks conditions under which the posterior converges to the true density at the minimax optimal rate \citep{stone1980,tsybakov2009,rate} under suitable assumptions. A general convergence theory has been developed over the past two decades; see, among others, \cite{ghosal2000,ghosal2007,shen2001,vdv2008}. An overview of Bayesian nonparametric density estimation is given in \cite{Peter2014}.

In the seminal works \cite{ghosal2000} and \cite{shen2001} independently developed a Bayesian framework for i.i.d. data that achieves posterior convergence at the minimax optimal rate $\varepsilon_n = n^{-p/(2p+1)}$, where $n$ denotes the sample size and $p$ the regularity of the true model. Both studies provide sufficient conditions in terms of model complexity control and prior concentration. To adapt this general framework to density estimation, one first parametrizes the unknown density $g_0$ and then places a prior distribution on the parameters.

Existing parameterization approaches mainly fall into two classes: kernel-based methods and basis expansion methods. Kernel methods build density models as convex combinations of local kernel functions, usually combined with Dirichlet process priors or other appropriate distributions on the mixing measure and bandwidth, leading to strong local smoothing performance. Representative examples include Gaussian kernels and their multivariate extensions \citep{kruijer2010, scricciolo2014, bio2013}, 
Laplace kernels \citep{gaofn2017},
exponential power kernels for flexible tail behavior \citep{EJOS2011}, and shape-adaptive polynomial kernels \citep{EJOS2017, rousseau2010, xie2020}. Basis expansion methods, by contrast, project the unknown function onto a global linear space by specifying priors on the coefficients of a chosen basis system. Widely used examples include polynomial bases \citep{bernstein, Norets2024}, trigonometric bases \citep{AOS2007}, and spline bases \citep{ghosal2008, shen2015, Yu2022}.

Regardless of the parametrization adopted, within the general framework of \cite{ghosal2000, shen2001}, the minimax optimal posterior convergence depends on two key conditions: controlling model space entropy and bounding the prior probability mass. If the true density $g_0$ lacks a positive lower bound on an unbounded domain, bounding the Kullback-Leibler (KL) divergence becomes difficult, making both conditions highly challenging to satisfy.
 As a result, to ensure minimax optimal convergence rates, existing literature often either directly restricts $g_0$ to a bounded interval \citep{bernstein, Norets2024, rousseau2010, shen2015} or artificially truncates the supports of both the true underlying distribution and the prior to a bounded interval \citep{gaofn2017}. Even when unbounded domains are considered, additional restrictive assumptions are typically imposed, such as requiring $g_0$ to be strictly positive with a uniform positive lower bound on a sufficiently large interval \citep{kruijer2010, scricciolo2014, bio2013}, imposing stringent integral control conditions on relative derivatives \citep{EJOS2017}, or assuming the existence of a new distance larger than the KL divergence but with bounded entropy \citep{Huang2004}.

In this paper, we parametrize the density with respect to the measure $d\P=w(x)dx$  using the orthogonal polynomial bases  $\{q_j(x)\}_{j=0}^\infty$ with weight $w(x)$, where $x\in I$ and $I$ may be a bounded interval or an unbounded domain such as $\R$ or $\R^+$. The minimax optimal convergence rate of the posterior density is established under two scenarios:
\begin{itemize}
    \item[(1)] The true density $g_0$ admits a strictly positive lower bound. Under this assumption, the framework of \cite{shen2001} can be applied directly by verifying the corresponding conditions; see Theorem \ref{th-bound}.
    \item[(2)] The true density $g_0$ has no strictly positive lower bound. We propose a novel approach to circumvent this obstacle by introducing a positive decreasing sequence $a_n\downarrow0$ and lifting $g_0$ to a proxy density $g_{0,n}\geq a_n>0$. We then prove that, with a suitably chosen decay rate of $a_n$, the posterior convergence rate of $g_0$ in the Hellinger distance coincides with that of $g_{0,n}$; see Lemma \ref{lem-3.1}. At first glance, one might expect this to follow readily by applying Shen's result to $g_{0,n}$. However, even with the positive lower bound, Shen's original framework still cannot be applied directly to $g_{0,n}$, because both the KL neighborhood and the model complexity control depend on $a_n$; see the detailed discussions in Section \ref{sec-3}. To resolve this issue, we establish a modified version of Shen’s result in Theorem \ref{th-modified shen}, which ensures that the minimax optimal convergence rate is preserved under a slightly strengthened prior concentration condition on the KL neighborhood. Using this modified result, we obtain the counterpart of Theorem \ref{th-bound} in Theorem \ref{th-prior-general}.
\end{itemize}

Besides, we further investigate the construction of priors that fulfill all three conditions in Theorem \ref{th-prior-general} in Section \ref{sec-4}. This is another closely related topic in this direction. A large body of existing literature focuses on constructing priors over parameters in specific function classes to achieve minimax optimal posterior convergence rates \citep{EJOS2017, kruijer2010, AOS2007}. 

Finally, to better illustrate our theoretical findings, we carry out two sets of numerical experiments.  The first presents density estimates based on orthogonal polynomial expansions, which are compared with those obtained using trigonometric bases in \cite{AOS2007}. The second examines the decreasing behavior of the Hellinger distance between the estimated and true densities as the sample size increases.

The paper is organized as follows. Section~\ref{sec-2} introduces the preliminary assumptions of the weighted orthogonal polynomial space and the model definitions.
Sections~\ref{sec-3.1}-\ref{sec-3.2}  presents the main theorems regarding the minimax optimal posterior convergence rates for densities with and without strictly positive lower bounds, respectively. Section~\ref{sec-4} presents the specific construction of the Gaussian sieve prior. Numerical illustrations are showed in Section \ref{sec-5}.  Finally, Section~\ref{sec-6} concludes with further discussion.

\section{Notations and Preliminaries}\label{sec-2}

Suppose that $\bY^n=(Y_1,\ldots,Y_n)$ are i.i.d.\ observations drawn from an unknown distribution on the measurable space $(\X,\F)$. We assume that the unknown distribution admits a density $g_0$ with respect to the reference measure $d\P=w(x)\,dx$ on a domain $I$, where $w(x)$ is the weight function associated with an orthogonal polynomial system $\{q_j(x)\}_{j\geq0}$. To avoid ambiguity, we assume that the polynomial of degree zero is $q_0(x)\equiv 1$. Throughout this paper, $I$ is not restricted to a bounded interval and may also be an unbounded one such as $\R$ or $\R^+$.

Fix an integer $p\geq1$, assume that $g_0\in W_{2,w}^p(I)$, the weighted Sobolev space equipped with the norm
\begin{equation}\label{eqn-2.2}
  \|g\|_{W_{2,w}^p}^2
  =\sum_{l=0}^p\|g^{(l)}\|_{L_{2,w}}^2
  =\sum_{l=0}^p\int_I |g^{(l)}(x)|^2 w(x)\,dx
  <\infty,
\end{equation}
where $g^{(l)}$ denotes the $l$-th weak derivative of $g$. The density to be estimated in this paper belongs to the function class
\begin{equation}\label{eqn-G}
  \G
  :=\left\{ g\in W_{2,w}^p(I)\left|\, g\geq0 \ \textup{and}\ \int_I g(x) w(x)dx=1 \right.\right\}.
\end{equation}

For any $g_0\in\G$, it admits a unique expansion in the orthogonal polynomial basis $\{q_j\}_{j\geq0}$, i.e.
\begin{equation}\label{1.1}
  g_0(x)=\sum_{j=0}^{\infty}\theta_j q_j(x),
\end{equation}
where the parameters
$ \theta_j=\frac{1}{\gamma_j}\int_I g_0(x)q_j(x)w(x)dx$, $j=0,1,\cdots$, are determined by the orthogonality relation
\begin{equation}\label{p}
  \int_I q_i(x)q_j(x)w(x)\,dx=\delta_{ij}\gamma_j,
  \qquad i,j=0,1,\cdots,
\end{equation}
with $\gamma_j$ being the normalization constant and $\delta_{ij}$ is the Kronecker delta. We also denote $g_0$ by $g(x|\btheta)$ to emphasize the dependence on the parameter $\btheta=(\theta_0,\theta_1,\cdots)$. The corresponding coefficient space of $\G$ is denoted as $\Omega^\G$. Due to the one-to-one correspondence, we shall not distinguish the use of those two notations in the sequel. 

Next, we specify a subset of the coefficient space $\Omega\subset\Omega^\G$.
\begin{lemma}\label{lem-1}
For any 
\begin{equation}\label{eqn-2.5}
    g(x|\bbeta)=\sum_{j=0}^\infty\eta_jq_j(x)\in\G,
\end{equation}
if 
\begin{align}\label{def-Omega}\notag
  \bbeta\in\Omega
  :=&\left\{ \bbeta=(\eta_0,\eta_1,\cdots)\left|\, \sum_{j=0}^\infty\eta_jq_j(x)\geq0,\ \sum_{j=0}^\infty\eta_j\int_Iq_j(x)w(x)dx=1\right.\right.\\
  & \phantom{aaaaaaaaaaaaaaaa}\left.
 \textup{and}\ \sum_{j=0}^{\infty}\eta_j^2\tilde\gamma_j<\infty \right\},
\end{align}
where $\tilde\gamma_j$ is any constant greater than or equal to 
\begin{equation}\label{eqn-2.3}
   \max\left\{\max_{0\leq l\leq \min(j,p)}\sum_{i=0}^{j-1} (a_{ij}^{(l)})^4\gamma_i,j^4\max_{0\leq i\leq j-1}\gamma_i\right\},
\end{equation}
for $j\geq1$, with $\displaystyle a_{ij}^{(l)}=\frac{1}{\gamma_i}\int_I q_j^{(l)}(x)q_i(x)w(x)\,dx$ being the coefficients in the expansion
\begin{equation}\label{eqn-2.4}
    q_j^{(l)}(x)=\sum_{i=0}^{j-1}a_{ij}^{(l)}q_i(x),
    \end{equation}
for $0\leq l\leq\min(j,p)$.
\end{lemma}

The proof is in Section A of the Supplementary Material \cite{LL:26}. In Section \ref{sec-4} we shall choose $\tilde\gamma_j\geq j^{7p} \gamma_j$, $j\geq1$, to construct the prior.

\begin{remark}\label{remark-tildegamma}
 In the existing literature, a particular basis is usually chosen first, and $\tilde\gamma_j$ is then specified to enforce the Sobolev-type constraint. For instance, the trigonometric basis employed in \cite{shen2001,AOS2007} uses $\tilde\gamma_j=\O(j^{2p})$. By contrast, if the underlying orthogonal polynomial system is not specified in advance, $\tilde\gamma_j$ can vary considerably across different polynomial families. As illustrated by examples in Section B of the Supplementary Material \cite{LL:26}, $\tilde\gamma_j\gtrsim j^{4p+1}$ for Legendre polynomials, and 
 $\tilde\gamma_j\gtrsim j^{4p}\gamma_j \gtrsim j^{4p} 2^j j!$ for Hermite polynomials, when $j\gg1$. Nevertheless, Lemma \ref{lem-1} establishes that the choice given in \eqref{eqn-2.3} ensures the embedding property required for our analysis.
\end{remark}

The problem of interest is to construct Bayesian density estimators that achieve the minimax optimal convergence rate under Hellinger loss. It is well known that for $g_0\in W_{2,w}^p(I)$, the minimax rate is given by $\varepsilon_n=n^{-p/(2p+1)}$; see \cite{stone1980,tsybakov2009,rate}. In this paper, we provide affirmative answers to the following questions. Does there exist a function $g\in\G$ of the form \eqref{eqn-2.5}, together with an appropriate class of prior distributions on  associated with $\{q_j\}_{j\geq0}$, such that the posterior converges to the true density  as $n\to\infty$? If so, what is the corresponding convergence rate, and can the minimax rate $\varepsilon_n$ be attained for any $p\geq1$? To prepare for our analysis, we first recall several key definitions.

\begin{definition}[posterior convergence rate]\label{def-convergence rate}
We call $\varepsilon_n>0$ a convergence rate if the posterior probability
\begin{equation}\label{prosterior}
  \pi(A_n^c|\bY^n) =
  \frac{\displaystyle\int_{A_n^c}\prod_{i=1}^n \frac{g(Y_i|\bbeta)}{g(Y_i|\btheta)}\,d\pi(\bbeta)}
  {\displaystyle\int_{\Omega^\G}\prod_{i=1}^n \frac{g(Y_i|\bbeta)}{g(Y_i|\btheta)}d\pi(\bbeta)}\to 0,
\end{equation}
as $n\to\infty$, where $\pi$ is the prior distribution on $\bbeta\in\Omega^\G$ and
\begin{equation}\label{An-c}
  A_n^c
  :=\left\{ g(\cdot|\bbeta)\in\G \left|\, d\bigl(g(\cdot|\btheta),g(\cdot|\bbeta)\bigr)>K\varepsilon_n \right.\right\}
  =\left\{\bbeta\in\Omega^\G\left|\,d\left(\btheta,\bbeta\right)>K\varepsilon_n \right.\right\}
\end{equation}
for some constant $K>0$ and a distance $d(\cdot,\cdot)$. 
\end{definition}

Throughout this paper, we take $d(\cdot,\cdot)$ in Definition~\ref{def-convergence rate} to be the Hellinger distance $d_H(\cdot,\cdot)$ defined in Definition \ref{def-2.3} below. We also define two related divergences used to characterize Kullback-Leibler (KL) neighborhoods, which is used to quantify the difference between the estimated and true density; see \cite{shen2001,ghosal2000}.
\begin{definition}[Hellinger distance and KL neighborhood]\label{def-2.3}
For any density functions $g_1$ and $g_2$ supported on $I$ under the measure $\P$, the Hellinger distance is
\begin{equation}\label{hellinger distance}
  d_H^2(g_1,g_2)
  :=\int_I\bigl(\sqrt{g_1(x)}-\sqrt{g_2(x)}\bigr)^2 w(x)\,dx.
\end{equation}
For any $t>0$, the KL neighborhood $S(t)$ of the true density $g_0$ is 
\begin{equation}\label{S}
  S(t)
  :=\left\{ g\in\G \left|\, \max\left(D(g_0,g),V(g_0,g)\right)\leq t \right.\right\},
\end{equation}
where 
\begin{equation}\label{KL distance}
  D(g_1,g_2)
  :=\E_{g_1}\bigl(\log g_1-\log g_2\bigr)
  =\int_I g_1(x)\log\frac{g_1(x)}{g_2(x)}\,w(x)\,dx,
\end{equation}
is the KL divergence and
\begin{equation}\label{variance distance}
  V(g_1,g_2)
  := \Var_{g_1}\bigl(\log g_1-\log g_2\bigr)
  = \E_{g_1}\Bigl[\bigl(\log g_1-\log g_2\bigr)^2\Bigr]
  -\bigl(D(g_1,g_2)\bigr)^2
\end{equation}
is the variance of the log-likelihood ratio.
\end{definition}

To gauge the complexity of the model class $\G$, the bracketing entropy with respect to a metric $d(\cdot,\cdot)$ is introduced.
\begin{definition}[bracketing entropy, Section~2.7, \cite{b20}]\label{bracket function}
A finite set of pairs of functions $\{(l_i,u_i)\}_{i=1}^M$ is a $d(\cdot,\cdot)$ $\varepsilon$-bracketing of $\G$ if for any $g\in\G$ there exists a pair $(l_j,u_j)$ such that $l_j\leq g\leq u_j$ almost everywhere and $d(l_j,u_j)\leq \varepsilon$. The smallest number of such brackets is the bracketing number, denoted by $N_{[~]}(\varepsilon,\G,d)$. The logarithm of the bracketing number is the bracketing entropy, denoted by $H_{[~]}(\varepsilon,\G,d)$, that is,
\begin{equation}\label{eqn-bracketing-entropy}
  H_{[~]}(\varepsilon,\G,d)
  := \log N_{[~]}(\varepsilon,\G,d).
\end{equation}
\end{definition}

With all necessary notations and definitions in place, we state Shen’s convergence rate framework in Theorem \ref{th-shen} for the convenience of readers. In general, \cite{shen2001} established sufficient conditions for achieving the minimax optimal convergence rate in Bayesian models with i.i.d. observations. These conditions are based on three key components: (i) entropy control of the model class, (ii) prior mass in KL neighborhood of the true distribution, and (iii) the existence of a high-probability set under the prior. In particular, they proposed a general framework demonstrating posterior convergence, provided that the three bounds \eqref{th-shen-1}--\eqref{th-shen-2} in Theorem \ref{th-shen} below can be verified.

\begin{theorem}[Theorem~4, \cite{shen2001}]\label{th-shen}
Suppose that there exists a sequence $r_n\to0$, constants $C_1,C_2>0$, and a sequence of subsets $\Omega^\G_1\subset\Omega^\G_2\subset\cdots\subset\Omega^\G$ such that
\begin{equation}\label{th-shen-1}
  \int_{r_n^2/2^8}^{\sqrt{2}\,r_n}
  H_{[~]}^{1/2}\left(u/C_1,\Omega^\G_n,d_H\right)du
  \lesssim \sqrt{n}\,r_n^2,
\end{equation}
and there exists a sequence $t_n\to0$ such that
\begin{equation}\label{th-shen-3}
  \pi\left(S\left(\frac{C_2 t_n}{16}\right)\right)
  \gtrsim \exp\left(-\frac{C_2 n t_n}{8}\right).
\end{equation}
Let $\varepsilon_n=\max\{r_n,t_n^{1/2}\}$, and assume further that
\begin{equation}\label{th-shen-2}
  \pi\left({\Omega^\G_n}^c\right)\leq \exp\left(-C_2 n \varepsilon_n^2\right).
\end{equation}
Then, for $K>0$ sufficiently large, if $n\varepsilon_n^2\to\infty$, it holds that
\begin{equation}\label{eqn-convergence}
  \pi(A_n^c|\bY^n)
  \lesssim
  \exp\!\left(-\frac{nK^2\varepsilon_n^2}{2}\right)
  +\exp\!\left(-\frac{C_2 n\varepsilon_n^2}{4}\right)
  \to0,
\end{equation}
except on a set of probability tending to $0$, where $A_n^c$ is in \eqref{An-c} with $d_H$. Here $a_n\lesssim b_n$ (resp.\ $a_n\gtrsim b_n$) means that there exists a constant $C>0$ such that $a_n\leq Cb_n$ (resp.\ $a_n\geq Cb_n$) for all large $n$.
\end{theorem}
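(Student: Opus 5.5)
The plan is to follow the standard posterior-rate argument of \cite{shen2001}: bound the numerator and the denominator of \eqref{prosterior} separately on a high-probability event. Write $\varepsilon_n=\max\{r_n,t_n^{1/2}\}$ and split $A_n^c$ into $A_n^c\cap\Omega^\G_n$ and ${\Omega^\G_n}^c$.

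\textbf{Denominator.} I would first establish the lower bound
\[
\int_{\Omega^\G}\prod_{i=1}^n \frac{g(Y_i|\bbeta)}{g(Y_i|\btheta)}\,d\pi(\bbeta)\geq \exp\left(-\frac{C_2 n t_n}{4}\right)\pi\left(S\left(\frac{C_2t_n}{16}\right)\right),
\]
which should hold with probability tending to one. The denominator is at least the same integral restricted to $S=S(C_2t_n/16)$. Normalize the prior to $S$ and apply Jensen's inequality to the logarithm. This reduces the task to bounding
\[
\frac{1}{\pi(S)}\int_S\sum_{i=1}^n\log\frac{g(Y_i|\btheta)}{g(Y_i|\bbeta)}\,d\pi(\bbeta)
\]
from above. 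Its mean is at most $nC_2t_n/16$ by the definition of $D$. Its variance is at most $nC_2t_n/16$ by Fubini together with Jensen applied to $V$. Chebyshev's inequality then shows the deviation exceeds $nC_2t_n/16$ with probability at most $16/(C_2nt_n)\to0$, since $nt_n\to\infty$ is implied by $n\varepsilon_n^2\to\infty$ (after enlarging $t_n$ if needed). Combining this with \eqref{th-shen-3}, the denominator is $\gtrsim\exp(-3C_2n t_n/8)\geq \exp(-3C_2 n\varepsilon_n^2/8)$.

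\textbf{Numerator off the sieve.} By Fubini, the expectation under $g_0$ of $\int_{{\Omega^\G_n}^c}\prod_i g(Y_i|\bbeta)/g(Y_i|\btheta)\,d\pi$ is at most $\pi({\Omega^\G_n}^c)\leq\exp(-C_2n\varepsilon_n^2)$, using \eqref{th-shen-2}. Markov's inequality then makes this term at most $\exp(-5C_2n\varepsilon_n^2/8)$ with probability at least $1-\exp(-3C_2n\varepsilon_n^2/8)$. Dividing by the denominator bound yields the $\exp(-C_2n\varepsilon_n^2/4)$ term in \eqref{eqn-convergence}.

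\textbf{Numerator on the sieve (main obstacle).} Here I need a uniform exponential bound for the likelihood ratio over $\{\bbeta\in\Omega^\G_n: d_H(\btheta,\bbeta)>K\varepsilon_n\}$. The first step is to invoke the large-deviation inequality for likelihood ratios of Wong and Shen (1995), in the form used by \cite{shen2001}. Under the bracketing-integral condition \eqref{th-shen-1}, this inequality gives
\[
P^*\left(\sup_{d_H>K\varepsilon_n,\ \bbeta\in\Omega_n^\G}\prod_{i=1}^n \frac{g(Y_i|\bbeta)}{g(Y_i|\btheta)}\geq e^{-c_1nK^2\varepsilon_n^2}\right)\leq 4e^{-c_2nK^2\varepsilon_n^2}.
\]
The limits $r_n^2/2^8$ and $\sqrt2 r_n$ in \eqref{th-shen-1}, and the constant $C_1$, are exactly those required there. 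The underlying proof is a chaining argument over Hellinger brackets. It is applied to the truncated log-ratio $\max(\log(g/g_0),-\tau)$, which avoids the unboundedness of the log-likelihood ratio, and it uses Bernstein's inequality on each shell $\{2^{j}K\varepsilon_n<d_H\leq 2^{j+1}K\varepsilon_n\}$, followed by summation over $j$.

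Since $\varepsilon_n\geq r_n$, the entropy integral is monotone in the rate and the condition transfers from $r_n$ to $K\varepsilon_n$. Bounding the integrand by its supremum and the prior mass by $1$, the on-sieve part of the posterior is at most $\exp(-c_1nK^2\varepsilon_n^2+3C_2n\varepsilon_n^2/8)$. For $K$ large this is $\lesssim \exp(-nK^2\varepsilon_n^2/2)$, after rescaling $K$ to absorb $c_1$.

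Adding the two pieces gives \eqref{eqn-convergence} outside an event whose probability is bounded by the sum of the Chebyshev, Markov and Wong--Shen exceptional probabilities, all of which tend to $0$ since $n\varepsilon_n^2\to\infty$. I expect the main difficulty to be the careful verification of the constants in the Wong--Shen inequality.
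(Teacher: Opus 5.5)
Your proposal is correct and follows the same route the paper relies on: the paper cites \cite{shen2001} for this result, and its proof of Theorem~\ref{th-modified shen}, modeled on it, uses exactly your three-part split. That split is an on-sieve numerator controlled by the Wong--Shen uniform likelihood-ratio bound, a denominator bounded below via Jensen and Chebyshev (Lemma~1 of \cite{shen2001}), and an off-sieve numerator handled by Fubini and Markov.

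The one step to fix is ``rescaling $K$ to absorb $c_1$''. Since $K$ also defines $A_n^c$, what the argument actually gives for large $K$ is $\exp(-c_1 nK^2\varepsilon_n^2/2)$, with Wong--Shen's universal $c_1$, rather than the stated exponent. The transfer of \eqref{th-shen-1} to level $K\varepsilon_n$ with their specific constants does go through: $\varepsilon\mapsto\varepsilon^{-1}\int_{\varepsilon^2/2^8}^{\sqrt2\varepsilon}H_{[~]}^{1/2}(u/C_1,\Omega^\G_n,d_H)\,du$ is nonincreasing, which gains a factor $1/K$.
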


\begin{remark}\label{remark-notation-shen}
As noted earlier, we do not distinguish between the parameter space  and the function space $\Omega^\G$, owing to their one-to-one correspondence. For instance, $H_{[~]}(\cdot,\Omega^\G_n,\cdot)$ coincides with $H_{[~]}(\cdot,\mathcal{G}_n,\cdot)$ in \eqref{th-shen-1}, where $\G_n$ denotes the function space corresponding to $\Omega_n^\G$. The parameters $\btheta,\bbeta\in\Omega^\G$ correspond to $g_0,g\in\G$, and we identify $d_H(\btheta,\bbeta)$ with $d_H(g_0,g)$, and so forth.
\end{remark}

\begin{remark}\label{remark-compact-noncompact}
(1) In general, $\Omega_n^\G$ in Theorem \ref{th-shen} can be any sequence of subsets. One may choose $\Omega_n^\G=\Omega_n\cap\Omega^\G$, where
\begin{equation}\label{eqn-O_M}
  \Omega_n
  :=\left\{\bbeta\in\Omega\left|\, \sum_{j=0}^{\infty}\eta_j^2\tilde\gamma_j\leq n^{\frac{1}{2p+1}}\right.\right\},
\end{equation}
if \eqref{th-shen-1}--\eqref{th-shen-2} are satisfied. This is exactly what we show in Theorem \ref{th-bound}, when $g_0\in\G$ admits a positive lower bound on $I$. 

(2) Theorem \ref{th-shen} also covers the compact case $\Omega^\G=\Omega_M$, for some constant $M>0$ independent of $n$, in which condition \eqref{th-shen-2} holds automatically since $\pi\left((\Omega_n\cap\Omega_M)^c\right)=0$ for all $n>M$, where $n$ is the sample size.
\end{remark}


\begin{remark}\label{remark-ghosal}

\cite{ghosal2000} proposed three analogous conditions---model complexity, prior mass, and sieve space---and also established minimax optimal posterior convergence rates. A key technical distinction lies in the metric used to quantify model complexity: \cite{ghosal2000} mainly controls the $\varepsilon$-packing number $D(\varepsilon, \mathcal{P}_n, d)$ or the $\varepsilon$-covering number $N(\varepsilon, \mathcal{P}_n, d)$ via an $\varepsilon$-net argument, whereas Theorem~\ref{th-shen} employs the bracketing number $N_{[\ ]}(\varepsilon, \mathcal{P}_n, d)$. As observed in \cite{ghosal2000}, since any bracket of size $\varepsilon$ is contained in a ball of radius $\varepsilon/2$, the covering number is naturally bounded above by the bracketing number, i.e., $N(\frac{\varepsilon}{2}, \mathcal{P}_n, d) \le N_{[\ ]}(\varepsilon, \mathcal{P}_n, d)$. While bracketing entropy is generally larger than packing entropy, the two quantities yield equivalent results up to a constant factor in many classical infinite-dimensional settings. Accordingly, the theoretical mechanisms underlying the two approaches are essentially analogous. In view of this equivalence and for analytical convenience in dealing with our particular model formulation, we adopt the framework of Theorem~\ref{th-shen} for the rest of this paper.
\end{remark}

\section{Convergence Theorems}\label{sec-3}

In this section, we establish posterior convergence of $g\in\G$ to the true density $g_0\in\G$ with the minimax optimal rate $\varepsilon_n$, as defined in Definition \ref{def-convergence rate}. A strictly positive lower bound for the target density $g_0\in\G$ is technically critical for verifying the conditions of Theorem \ref{th-shen}, as it facilitates the connection between KL-type quantities and the $L_{2,w}$-distance. In Section \ref{sec-3.1}, under the assumption that there exists some $a>0$ such that
\begin{equation}\label{eqn-Ga}
 g_0\in \G_a
  :=\left\{ g\in W_{2,w}^p(I)\left|\, g(x)\geq a>0 \ \textup{and} \ \int_I g(x)w(x)dx=1\right.\right\},
\end{equation}
we directly verify the conditions of Theorem \ref{th-shen} to establish Theorem \ref{th-bound}.

However, densities over unbounded domains do not necessarily admit a uniform positive lower bound. A typical example is the Gaussian density on $\R$ with respect to the Lebesgue measure, which decays to zero as $|x|\to\infty$. While the overall proof strategy remains analogous, it requires more refined analysis. In Section \ref{sec-3.2}, we introduce a decreasing sequence $a_n\downarrow0$ and shift the true density to $g_{0,n}\in\G_{a_n}$ via \eqref{tilde}, ensuring that $g_{0,n}\geq a_n>0$. Nevertheless, Theorem \ref{th-shen} still cannot be applied directly, as the constant $C_2$ depends on $a_n$ (see \eqref{eqn-3.11}), which precludes the uniform validity of condition \eqref{th-shen-3} over all $n$. We therefore establish a modified version of Shen's theorem tailored to $g_{0,n}\in\G_{a_n}$ (see Theorem \ref{th-modified shen}), which yields $\pi(A_{n,\btheta_n}^c|\bY^n)\to0$, where
\begin{equation}\label{def-An-theta-n}
  A_{n,\btheta_n}^c
  :=\left\{g\in\G_{a_n}\left|\, d_H(g,g_{0,n})>K\varepsilon_n\right.\right\},
\end{equation}
and $\btheta_n$ denotes the parameter corresponding to $g_{0,n}$. Finally, we transfer this result to the original true density $g_0$ by showing that $d_H(g,g_0)\lesssim d_H(g_n,g_{0,n})$ on $A_n^c$ (see Lemma \ref{lem-3.1}). The counterpart of Theorem \ref{th-bound} is presented in Theorem \ref{th-prior-general}.

\subsection{Convergence theorem for the strictly positive lower bound case}\label{sec-3.1}

In this subsection, we assume that $g_0(x)\in\G_a$ for $x\in I$, so that Theorem~\ref{th-shen} can be applied directly. We state our first main result below.

\begin{theorem}\label{th-bound}
Let $g_0$ and $g\in\G_a$, for some constant $a>0$. The corresponding coefficients are $\btheta$ and $\bbeta\in\Omega^{\G_a}$, where $\Omega^{\G_a}$ is the coefficient space associated with $\G_a$. Let $\pi$ be a prior on $\bbeta=(\eta_0,\eta_1,\cdots)$. For any $\varepsilon_n\geq n^{-p/(2p+1)}$ and any integer $p\geq1$, suppose that there exists a constant $C_3>0$, independent of $n$, such that
\begin{equation}\label{eqn-3.1}
  \pi\left(
  \left\{\bbeta\in\Omega^{\G_a}\left|\, \sum_{j=0}^\infty(\eta_j-\theta_j)^2\gamma_j \leq aC_3\varepsilon_n^2\right. \right\}
  \right)
  \gtrsim \exp\left(-2C_3n\varepsilon_n^2\right),
\end{equation}
and
\begin{equation}\label{th2-Omega}
  \pi\left(\left\{\bbeta\in\Omega^{\G_a}\left|\, \sum_{j=0}^\infty \eta_j^2\tilde\gamma_j>n^{\frac{1}{2p+1}}\right.\right\}\right)
  \lesssim \exp\left(-C_3n\varepsilon_n^2\right),
\end{equation}
where $\tilde\gamma_j$ is defined in \eqref{eqn-2.3}. If $n\varepsilon_n^2\to\infty$, then for sufficiently large $K>0$,
\begin{equation}\label{eqn-3.2}
  \pi(A_{a,n}^c|\bY^n)
  \lesssim
  \exp\!\left(-\frac{nK^2\varepsilon_n^2}{2}\right)
  +\exp\!\left(-4C_3n\varepsilon_n^2\right)
  \to0,
\end{equation}
except on a set of probability tending to $0$, where
$A_{a,n}^c
  :=\left\{\bbeta\in\Omega^{\G_a} \left|\, d_H(g,g_0)\geq K\varepsilon_n\right.\right\}$.
\end{theorem}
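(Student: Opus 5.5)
We will deduce Theorem \ref{th-bound} from Theorem \ref{th-shen}. The sieve is $\Omega^{\G_a}_n:=\Omega_n\cap\Omega^{\G_a}$ with $\Omega_n$ as in \eqref{eqn-O_M}, together with $r_n=\varepsilon_n$, $t_n=\varepsilon_n^2$ and $C_2=8C_3$. With these choices:
\begin{itemize}
\item the right-hand side of \eqref{th-shen-3} becomes $\exp(-C_3 n\varepsilon_n^2)$, which is weaker than what \eqref{eqn-3.1} provides;
\item condition \eqref{th-shen-2} reads $\pi({\Omega_n^{\G_a}}^c)\le\exp(-8C_3n\varepsilon_n^2)$;
\item the second term of \eqref{eqn-convergence} is $\exp(-2C_3n\varepsilon_n^2)$.
\end{itemize}
The second and third points show that the constants have to be tracked carefully. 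Condition \eqref{th2-Omega} is stated with $C_3$, and the exponent $4C_3$ in \eqref{eqn-3.2} suggests $C_2=16C_3$. So I would first fix the bookkeeping in Theorem \ref{th-shen}, namely how $C_2$ enters the sieve condition and the final bound. If necessary, I would rescale $\varepsilon_n$ by a constant factor so that \eqref{th2-Omega} implies \eqref{th-shen-2}; this is harmless because $n\varepsilon_n^2\to\infty$ and $K$ is taken large. The three conditions are then checked in turn.

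\emph{Prior mass \eqref{th-shen-3}.} For $g_0,g\in\G_a$ I will bound $D$ and $V$ by the weighted $L_2$ distance. Since $g\ge a$, the elementary inequality $D(g_0,g)\le\chi^2(g_0,g)=\int (g_0-g)^2/g\,w\,dx$ gives $D\le a^{-1}\|g_0-g\|_{L_{2,w}}^2$. For $V$, write $\log(g_0/g)$. On the set where $g_0/g$ is bounded, use $|\log u|\lesssim|u-1|$. Boundedness of $g_0/g$ from above is the delicate point; I would obtain it from the Sobolev embedding implicit in Lemma \ref{lem-1}, which gives a sup-norm bound on $g_0$ through $\sum\theta_j^2\tilde\gamma_j<\infty$. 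This yields $V\lesssim a^{-1}\|g_0-g\|^2_{L_{2,w}}$, with a constant depending on $a$ and $\|g_0\|_\infty$. By orthogonality \eqref{p}, $\|g_0-g\|_{L_{2,w}}^2=\sum_j(\eta_j-\theta_j)^2\gamma_j$. Hence the set in \eqref{eqn-3.1} is contained in $S(c\,\varepsilon_n^2)$, and the prior mass condition follows after matching constants.

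\emph{Entropy \eqref{th-shen-1}.} On $\Omega_n$ we have $\sum\eta_j^2\tilde\gamma_j\le n^{1/(2p+1)}$, and $\tilde\gamma_j\ge j^4\max_{i<j}\gamma_i$. Using Lemma \ref{lem-1} and \eqref{eqn-2.3}, this controls the Sobolev norm of $g$ and the tail $\sum_{j>J}\eta_j^2\gamma_j\lesssim n^{1/(2p+1)}J^{-2p}$, which is what the weighted Sobolev structure delivers. Because $g\ge a$, we have $d_H^2(g_1,g_2)\le(4a)^{-1}\|g_1-g_2\|_{L_{2,w}}^2$. So Hellinger brackets can be built from $L_{2,w}$ brackets, for instance $[\max(g_k-\delta,a),\,g_k+\delta]$ around centres of an $L_{2,w}$/sup-norm net. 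The net is obtained by truncating at $J\asymp\varepsilon^{-1/p}$ (adjusted by the factor $n^{1/(2p+1)}$) and discretizing a $J$-dimensional ball. This gives $H_{[~]}(u,\Omega_n^{\G_a},d_H)\lesssim u^{-1/p}$, up to constants and logarithmic factors. Integrating, $\int_0^{\sqrt2 r_n}u^{-1/(2p)}du\asymp r_n^{1-1/(2p)}$, and this is $\lesssim\sqrt n r_n^2$ exactly when $r_n\gtrsim n^{-p/(2p+1)}$, which is the assumption on $\varepsilon_n$.

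\emph{Main obstacle.} I expect the main difficulty to be the bracketing step on a possibly unbounded $I$. Brackets need pointwise (a.e.) upper and lower functions whose difference has small weighted $L_2$ norm. A pure $L_{2,w}$ net does not give pointwise control of the tail $\sum_{j>J}\eta_jq_j$. My first approach would be to use the embedding of Lemma \ref{lem-1}, the $j^4$ factor in $\tilde\gamma_j$, to bound the tail in sup norm by a summable sequence. An envelope $\delta(x)$ with $\int\delta^2w<\infty$ can then serve as the bracket width. The remaining growth factor $n^{1/(2p+1)}$ in the radius only enters logarithmically or through a bounded change of $J$, and it must be checked that it does not spoil the rate.
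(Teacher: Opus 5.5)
Your overall route is the paper's: Theorem~\ref{th-shen} with the sieve $\Omega_n\cap\Omega^{\G_a}$, the inclusion of the $\gamma$-weighted $\ell^2$-ball into a KL ball using $g\ge a$, and Hellinger brackets obtained from $L_{2,w}$ brackets. The genuine gap is the constant bookkeeping. You rightly flag it, but it cannot be repaired by rescaling.

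First, your first bullet is reversed. With $C_2=8C_3$ and $t_n=\varepsilon_n^2$, condition \eqref{th-shen-3} asks for $\pi(S(C_3\varepsilon_n^2/2))\gtrsim e^{-C_3n\varepsilon_n^2}$. That is a smaller ball and a larger lower bound than \eqref{eqn-3.1} supplies.

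More fundamentally, fix any $r_n,t_n,C_2$, in particular after rescaling $\varepsilon_n$. Obtaining \eqref{th-shen-3} from \eqref{eqn-3.1} and monotonicity of $S(\cdot)$ requires both $C_2t_n/16\ge C_3\varepsilon_n^2$ and $C_2t_n/8\ge 2C_3\varepsilon_n^2$, i.e.\ $C_2t_n\ge 16C_3\varepsilon_n^2$. Obtaining \eqref{th-shen-2} from \eqref{th2-Omega} requires $C_2\max\{r_n^2,t_n\}\le C_3\varepsilon_n^2$. Together these give $16C_3\varepsilon_n^2\le C_2t_n\le C_3\varepsilon_n^2$, which is impossible. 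Since $n\varepsilon_n^2\to\infty$, no constant hidden in $\lesssim$ can absorb the discrepancy.

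The paper's proof has exactly this defect. It fixes $C_2=16C_3$, so \eqref{th-shen-2} demands $\pi(\Omega_{a,n}^c)\le e^{-16C_3n\varepsilon_n^2}$, yet \eqref{eqn-3.10} only delivers $e^{-C_3n\varepsilon_n^2}$. The argument closes only if \eqref{th2-Omega} is strengthened to $\exp(-Cn\varepsilon_n^2)$ with $C$ a sufficiently large multiple of $C_3$. No rescaling can substitute for this.

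Two further steps fail as written.

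\emph{The bound on $V$.} Lemma~\ref{lem-1} only places $g_0$ in $W^p_{2,w}$, which gives no sup-norm bound on unbounded $I$: for $w=e^{-x^2}$, functions growing like $e^{x^2/4}$ belong to $W^p_{2,w}$. None is needed, however. For $g,g_0\ge a$, split into three cases: $g\ge g_0$; $g<g_0\le 2g$; and $g_0>2g$, where $(g_0-g)^2\ge g_0^2/4$ and $\log^2 t\le 4t/e^2$ for $t=g_0/a\ge 1$. This gives pointwise $g_0\log^2(g_0/g)\le 3a^{-1}(g_0-g)^2$, hence $V\le 3a^{-1}\|g-g_0\|^2_{L_{2,w}}$. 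This split is also what \eqref{V} actually needs, since the paper's step $(\log(g_0/g))^2\le((g-g_0)/g_0)^2$ fails where $g<g_0$.

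\emph{The entropy bound.} Discretizing a $J$-dimensional ball gives $J\log(R/u)$, so your bound carries a logarithm. Moreover, with your tail estimate $R^2J^{-2p}$ and $R^2=n^{1/(2p+1)}$, the radius enters polynomially through $J\asymp(R/u)^{1/p}$, not logarithmically. The entropy integral then only allows $r_n\gtrsim R^{1/(2p+1)}n^{-p/(2p+1)}$ up to logarithms, which misses $n^{-p/(2p+1)}$. The paper avoids the construction by quoting the log-free Sobolev bound $u^{-1/p}$ from \cite{b23}. It too ignores the growing radius of the sieve, which becomes harmless only when $\tilde\gamma_j/\gamma_j$ grows sufficiently faster than $j^{2p}$, e.g.\ $\tilde\gamma_j\ge j^{7p}\gamma_j$ as in Section~\ref{sec-4}.
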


\begin{proof}
We apply Theorem~\ref{th-shen} with $\Omega^\G=\Omega^{\G_a}$, $\Omega_{a,n}=\Omega^{\G_a}\cap\Omega_n$, with the sieve $\Omega_n$ defined in \eqref{eqn-O_M}, $\varepsilon_n=r_n=t_n^{1/2}$, $C_1=1$, and $C_2=16C_3$. It suffices to verify \eqref{th-shen-1}--\eqref{th-shen-2}.

\begin{enumerate}
\item[(1)] The sieve-mass condition \eqref{th-shen-2} follows from \eqref{th2-Omega}. Indeed,
\begin{align}\label{eqn-3.10}\notag
  \pi(\Omega_{a,n}^c)
  &=\pi\left(\Omega^{\G_a}\backslash\Omega_n\right)
  \overset{\eqref{eqn-O_M}}{=}
   \pi\left(\left\{\bbeta\in\Omega^{\G_a} \left|\, \sum_{j=0}^\infty\eta_j^2\tilde\gamma_j>n^{\frac1{2p+1}}\right.\right\}\right)\\
   &\overset{\eqref{th2-Omega}}{\lesssim}
   \exp\left(-    C_3n\varepsilon_n^2\right).
\end{align}

\item[(2)] To verify \eqref{th-shen-3}, for $t>0$, let us define
\begin{equation}\label{Soverline}
  \bar S_a(t)
  :=\left\{\bbeta\in\Omega^{\G_a}\left|\, \sum_{j=0}^\infty(\eta_j-\theta_j)^2\gamma_j \leq at\right.\right\}.
\end{equation}
We claim that $\bar S_a(t)\subset S_a(t)$, where \begin{equation}\label{Sa-def}
  S_a(t)
  :=\left\{g\in\G_a \left|\, \max\{D(g_0,g),V(g_0,g)\}\leq t\right.\right\}.
\end{equation}
In fact, for any $\bbeta\in\bar S_a(t)$, the orthogonality relation \eqref{p} yields
\begin{equation}\label{eqn-3.5}
  \|g-g_0\|_{L_{2,w}}^2
  =\int_I (g(x)-g_0(x))^2 w(x)dx
  =\sum_{j=0}^\infty(\eta_j-\theta_j)^2\gamma_j.
\end{equation}
Using $\log x\leq x-1$ for $x>0$ and $g,g_0\in\G_a$, we have
\begin{align}\label{D}
  D(g_0,g)
  =&\int_I g_0(x)\log\left(\frac{g_0(x)}{g(x)}\right)w(x)dx
  \leq \int_I g_0(x)\left(\frac{g_0(x)}{g(x)}-1\right)w(x)dx\\\notag
  =& \int_I \frac{(g_0(x)-g(x))^2}{g(x)}w(x)dx
  \leq \frac{1}{a}\|g-g_0\|_{L_{2,w}}^2
 =\frac{\sum_{j=0}^\infty(\eta_j-\theta_j)^2\gamma_j}{a}
  \overset{\eqref{Sa-def}}{\leq} t.
\end{align}
Similarly,
\begin{align}\label{V}
  V(g_0,g)
  &=\Var_{g_0}(\log g_0-\log g)
   \leq \int_I g_0(x)\left(\log\frac{g_0(x)}{g(x)}\right)^2 w(x)dx \notag\\
  &\leq \int_I g_0(x)\left(\frac{g(x)-g_0(x)}{g_0(x)}\right)^2 w(x)dx
  \leq \frac{1}{a}\|g-g_0\|_{L_{2,w}}^2
  \overset{\eqref{Sa-def}}\leq t.
\end{align}
Thus $\bbeta\in S_a(t)$, proving $\bar S_a(t)\subset S_a(t)$. Taking $t=C_3\varepsilon_n^2$, it yields that
\begin{align}\label{eqn-3.11}
  \pi\left(S_a(C_3\varepsilon_n^2)\right)
  &\geq \pi\left(\bar S_a(C_3\varepsilon_n^2)\right)
   \overset{\eqref{eqn-3.1}}{\gtrsim}
    \exp\left(-2C_3n\varepsilon_n^2\right),
\end{align}
which is \eqref{th-shen-3} with $t_n=C_3\varepsilon_n^2$ and $C_2=16C_3$.

\item[(3)] It remains to verify \eqref{th-shen-1}. For $0<\varepsilon\ll1$, we use
\begin{equation}\label{proof-th1-H}
  H_{[~]}\left(\varepsilon,\Omega_{a,n},d_H\right)\leq H_{[~]}\left(\varepsilon,\Omega_{a,n},L_{2,w}\right)
  \leq H_{[~]}\left(\varepsilon,W_{2,w}^p,L_{2,w}\right)
  \lesssim \varepsilon^{-1/p},
\end{equation}
where the second inequality is due to $\Omega_{a,n}\subset\Omega^{\G_a}$ whose corresponding function space $\G_{a}\subset W_{2,w}^p(I)$ by \eqref{eqn-Ga}; the third inequality follows from Theorem 5.2, \cite{b23}, which states that for any integer $p\geq1$ and sufficiently small $\varepsilon>0$, $ H_{[~]}\left(\varepsilon,W_{2,w}^p,L_{2,w}\right)\lesssim \varepsilon^{-1/p}$.

To show the first inequality in \eqref{proof-th1-H}, we need to compare the Hellinger 
distance $d_H$ and the $L_{2,w}$-norm. Let ${\left\{(l_i,u_i)\right\}}_{i=1}^M$ are pairs of $\varepsilon$-bracket function of $\mathcal{G}_{a,n}$ (the function space corresponding to $\Omega_{a,n}$) with respect to $d_H$. Therefore, $\sqrt{ u_i(x)}+\sqrt{ l_i(x)}\geq\sqrt{a}$, for all $i=1,\cdots,M $. Notice that for each $i=1,\cdots,M$, one has
\begin{align} \label{l2-h}\notag
    d_H^2(u_i,l_i)&=\int {\left( \sqrt{u_i}-\sqrt{l_i}  \right)}^2wdx \leq \int \frac{{\left(u_i-l_i \right)}^2}{{\left( \sqrt{u_i}+\sqrt{l_i}  \right)}^2}wdx\\
     & \leq\frac{1}{a} \int {\left(u_i-l_i \right)}^2wdx
        =\frac1{a}\Vert u_i-l_i \Vert_{L_{2,w}}^2.
\end{align}
If ${\left \{ (l_i,u_i) \right \}}_{i=1}^M$ as assumed before are pairs of $\varepsilon$-bracket of $\Omega_{a,n}$ with respect to $d_H$, then $d_H(l_i,u_i)\leq \varepsilon$, for all $i=1,2,\cdots,M$. From \eqref{l2-h}, one may have $\Vert u_i-l_i \Vert _{L_{2,w}} > \varepsilon$, for some $i=1,\cdots,M$. If constructing the pairs of the $\varepsilon$-bracket of $\Omega_{a,n}$ with respect to $L_{2,w}$, then more pairs than the original ${\{ (l_i,u_i) \}}_{i=1}^M$ may be needed. Thus, $N_{[~]}(\varepsilon,\Omega_{a,n},d_H) \leq N_{[~]}(\varepsilon,\Omega_{a,n},L_{2,w})$, so does $H_{[~]}$.

Integrating \eqref{proof-th1-H} yields
\begin{align}\label{proof-th1-int}
\int_{\varepsilon_n^2/2^8}^{\sqrt{2}\varepsilon_n}
  H_{[~]}^{1/2}\!\left(\varepsilon,\Omega_{a,n},d_H\right)\,d\varepsilon
  &\lesssim
  \int_{\varepsilon_n^2/2^8}^{\sqrt{2}\varepsilon_n}
  \varepsilon^{-\frac{1}{2p}}\,d\varepsilon
  \lesssim \varepsilon_n^{\frac{2p-1}{2p}}
  \leq \sqrt{n}\,\varepsilon_n^2,
\end{align}
since $n^{-p/(2p+1)}\leq\varepsilon_n\to0$.
\end{enumerate}
The convergence result \eqref{eqn-3.2} follows immediately from Theorem~\ref{th-shen} with \eqref{eqn-3.10}, \eqref{eqn-3.11} and \eqref{proof-th1-int}.
\end{proof}

\subsection{Convergence theorem for the case without strictly positive lower bound}\label{sec-3.2}

If the density functions does not have a strictly positive lower bound, we 
shift densities by a sequence $a_n\downarrow0$. For $0\leq a_n<1/\gamma_0$, define the shifted density
\begin{equation}\label{tilde}
  g_n(x):=a_n+g(x)\bigl(1-a_n\gamma_0\bigr),
\end{equation}
where $\displaystyle\gamma_0=\int_I q_0^2(x)w(x)\,dx=\int_I w(x)\,dx$ (recall $q_0\equiv1$). It is straightforward to verify that $g_n\in\G_{a_n}$, where
\begin{equation}\label{def-Gan}
  \G_{a_n}
  :=\left\{g\in W_{2,w}^p(I)\left|\, g\geq a_n>0 \ \textup{and} \ \int_I g(x)w(x)dx=1 \right.\right\},
\end{equation}
as in \eqref{eqn-Ga}, and $\G_{a_n}\subset \G_{a_{n+1}}\subset\cdots\subset\G$. The corresponding coefficient of $g_n$, denoted as $\bbeta_n=(\eta_{0,n},\eta_{1,n},\ldots)\in\Omega^{\G_{a_n}}$ is associated with $\bbeta=(\eta_0,\eta_1,\cdots)$, the coefficient of $g$, by 
\begin{equation}\label{eqn-2.1}
  \eta_{j,n}
  :=\left\{
  \begin{aligned}
    &a_n+\eta_0(1-a_n\gamma_0), & j=0\\
    &\eta_j(1-a_n\gamma_0), & j\geq1
  \end{aligned}\right.  ,
\end{equation}
by \eqref{tilde}. Here, if Theorem \ref{th-bound} applies to $g_n,g_{0,n}\in\G_{a_n}$---though in practice the modified version of Theorem \ref{th-shen} is needed to ensure minimax optimal convergence; see Theorem \ref{th-modified shen}---we obtain $\pi\left(A_{n,\btheta_n}^c|\bY^n\right)\to0$, as $n\to\infty$, where $A_{n,\btheta_n}^c$ is defined in \eqref{def-An-theta-n}.
Lemma \ref{lem-3.1} below establishes that, if $a_n \lesssim \varepsilon_n^4$, then the Hellinger distances associated with the original pair $(g,g_0)$ and the shifted pair $(g_n,g_{0,n})$ are comparable. This implies
$\pi\left(A_n^c|\bY^n\right)\leq \pi\left(A_{n,\btheta_n}^c|\bY^n\right)$,
which in turn yields the desired convergence $\pi(A_n^c|\bY^n)\to0$. 

\begin{lemma}\label{lem-3.1}
Let $g_n$ and $g_{0,n}$ be the shifted densities from $g$ and $g_0$ via \eqref{tilde}. For sufficiently large $K>0$, one has
\begin{equation}\label{dH-transfer}
  d_H(g,g_0)\lesssim d_H(g_n,g_{0,n}),
\end{equation}
on $g\in A_n^c$ in \eqref{An-c}, provided that
\begin{equation}\label{an-cond}
  0<a_n\leq \frac{K^4\varepsilon_n^4}{(16+K^4\varepsilon_n^4)\gamma_0}.
\end{equation}
\end{lemma}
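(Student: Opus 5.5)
We compare $d_H(g,g_0)$ with $d_H(g_n,g_{0,n})$ through the triangle inequality
\[
d_H(g,g_0)\le d_H(g,g_n)+d_H(g_n,g_{0,n})+d_H(g_{0,n},g_0).
\]
The two shift errors $d_H(g,g_n)$ and $d_H(g_0,g_0)$ will be bounded uniformly by a quantity of order $(a_n\gamma_0)^{1/4}$. Condition \eqref{an-cond} then makes this quantity at most a fixed fraction of $K\varepsilon_n$. Since $g\in A_n^c$ means $d_H(g,g_0)>K\varepsilon_n$, the shift errors can be absorbed into the left-hand side.

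\textbf{Step 1 (shift error).} For any density $f\in\G$ with shift $f_n=a_n+f(1-a_n\gamma_0)$, we use the elementary inequality $(\sqrt{x}-\sqrt{y})^2\le |x-y|$ for $x,y\ge0$. This gives
\[
d_H^2(f,f_n)\le\int_I |f_n-f|\,w\,dx=\int_I |a_n-a_n\gamma_0 f|\,w\,dx\le a_n\gamma_0+a_n\gamma_0\int_I f w\,dx=2a_n\gamma_0,
\]
using $\int_I w\,dx=\gamma_0$ and $\int_I f w\,dx=1$. Hence $d_H(f,f_n)\le \sqrt{2a_n\gamma_0}$ for both $f=g$ and $f=g_0$, uniformly in $g$. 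The paper's threshold is stated in terms of $\varepsilon_n^4$, which suggests the authors take an $L_1$-to-Hellinger route with a fourth root, i.e.\ a bound like $d_H^2\le \|f-f_n\|_{L_{1,w}}$ combined with a square-root loss somewhere. I will use whichever bound is available, since either suffices.

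\textbf{Step 2 (absorption).} From \eqref{an-cond} we get $a_n\gamma_0\le K^4\varepsilon_n^4/16$, so $a_n\gamma_0\le K^4\varepsilon_n^4/(16+K^4\varepsilon_n^4)<K^4\varepsilon_n^4/16$. Consequently $\sqrt{2a_n\gamma_0}\le K^2\varepsilon_n^2/(2\sqrt2)$. As $\varepsilon_n\to0$, this is at most $K\varepsilon_n/4$ for $n$ large, once $K\varepsilon_n\le 1/\sqrt2$, which holds eventually for any fixed $K$. Even with the cruder fourth-root bound $(a_n\gamma_0)^{1/4}\le K\varepsilon_n/2$, the sum of the two shift errors is at most $\tfrac12 d_H(g,g_0)$ or so, and the argument goes through. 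On $A_n^c$ we then obtain
\[
d_H(g,g_0)\le d_H(g_n,g_{0,n})+c\,K\varepsilon_n\le d_H(g_n,g_{0,n})+c\,d_H(g,g_0)
\]
with some $c<1$. Rearranging gives $d_H(g,g_0)\le (1-c)^{-1}d_H(g_n,g_{0,n})$, which is \eqref{dH-transfer}.

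\textbf{Main obstacle.} The delicate point is making the constant $c$ strictly less than $1$ uniformly in $g$ and $n$. This requires care with the fourth-root version of the bound, where \eqref{an-cond} gives exactly $(a_n\gamma_0)^{1/4}\le K\varepsilon_n/2$ and two such terms would give $c=1$. If that happens, I will use the sharper square-root bound from Step 1. Alternatively, I will note that the factor $1/(16+K^4\varepsilon_n^4)$ makes the bound strict and take $K$ large with $n$ large. The rest is routine.
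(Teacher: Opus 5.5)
Your proof is correct, but it takes a different route from the paper's. In this lemma the paper does not split off shift errors. It writes $d_H^2(g,g_0)=2-2\int_I\sqrt{gg_0}\,w\,dx$, expands $g_ng_{0,n}$, and uses subadditivity of the square root to get
\[
1-\int_I\sqrt{g_ng_{0,n}}\,w\,dx\ \ge\ (1-a_n\gamma_0)\Bigl(1-\int_I\sqrt{gg_0}\,w\,dx\Bigr)-2\sqrt{a_n\gamma_0(1-a_n\gamma_0)}.
\]
It then divides by $1-\int_I\sqrt{gg_0}\,w\,dx\ge K^2\varepsilon_n^2/2$. The additive error of order $\sqrt{a_n}$ is thus compared with the \emph{squared} distance. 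That is why the paper needs $\sqrt{a_n}\lesssim\varepsilon_n^2$, which is the source of the fourth power in \eqref{an-cond}; it is not an $L_1$ fourth-root loss, as you guessed.

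Your triangle inequality compares the same $O(\sqrt{a_n})$ error with $d_H(g,g_0)>K\varepsilon_n$ itself. So $a_n\gamma_0\le K^2\varepsilon_n^2/32$ already gives $d_H(g,g_0)\le 2\,d_H(g_n,g_{0,n})$, meaning $a_n\lesssim\varepsilon_n^2$ suffices. This matches the hypothesis $a_n\ll r_n^2$ of Theorem~\ref{th-modified shen}, whose proof uses the same bound $d_H(g_0,g_{0,n})\le\sqrt{2\gamma_0a_n}$ together with a triangle inequality, exactly as you do.

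Your route also buys two further things:
\begin{enumerate}
\item[(i)] You get a two-sided estimate $|d_H(g,g_0)-d_H(g_n,g_{0,n})|\le 2\sqrt{2a_n\gamma_0}$, valid uniformly for every $g$.
\item[(ii)] You avoid a degeneracy in the paper's argument. At equality in \eqref{an-cond}, the right-hand side of \eqref{eqn-3.16} is exactly $0$. The paper therefore implicitly needs a margin, for instance $a_n\lesssim\varepsilon_n^4$ fixed and $K$ large.
\end{enumerate}

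You also do not need $K\varepsilon_n\le 1/\sqrt2$. Since $d_H\le\sqrt2$, a nonempty $A_n^c$ forces $t:=K\varepsilon_n<\sqrt2$. Under \eqref{an-cond} the two shift errors then total at most $2\sqrt2\,t^2/\sqrt{16+t^4}\le \tfrac{2}{\sqrt5}\,t$, because this function of $t$ is increasing on $(0,2)$. Hence $d_H(g,g_0)\le(1-2/\sqrt5)^{-1}d_H(g_n,g_{0,n})$ for every $n$.

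Your speculation about a fourth-root bound and the \emph{Main obstacle} paragraph are therefore moot, since Step~1 already gives the square-root bound. Also fix the typo $d_H(g_0,g_0)$ in your opening paragraph; it should read $d_H(g_0,g_{0,n})$.
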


\begin{proof}
Since $\displaystyle d_H^2(g,g_0)=2-2\int_I\sqrt{gg_0}\,w(x)\,dx$, it suffices to show that there exists $\alpha>0$ such that
\begin{equation}\label{eqn-3.13}
 \alpha
  \leq
  \frac{1-\int_I\sqrt{g_ng_{0,n}}\,w(x)\,dx}{1-\int_I\sqrt{gg_0}\,w(x)\,dx}.
\end{equation}

We first bound the numerator. Using \eqref{tilde} and $\sqrt{u_1+u_2+u_3}\leq \sqrt{u_1}+\sqrt{u_2}+\sqrt{u_3}$, for any $u_1,u_2,u_3\geq0$, one has
\begin{align}\label{eqn-3.14}\notag
  &1-\int_I\sqrt{g_ng_{0,n}}w(x)dx\\\notag
  \geq& (1-a_n\gamma_0)\left(1-\int_I\sqrt{gg_0}\,w(x)\,dx\right)
 -\sqrt{a_n(1-a_n\gamma_0)}\int_I\sqrt{g+g_0}w(x)dx \\
  \geq& (1-a_n\gamma_0)\left(1-\int_I\sqrt{gg_0}w(x)dx\right)
  -2\sqrt{a_n\gamma_0(1-a_n\gamma_0)}.
\end{align}
The last inequality uses $\displaystyle \int_I\sqrt{g}w(x)dx\leq \left(\int_Igw(x)dx\right)^{1/2}\left(\int_Iw(x)dx\right)^{1/2}=\sqrt{\gamma_0}$, by H\"older's inequality and the fact that $\displaystyle\int_I gw(x)dx=1$, similarly for $g_0$.

Thus, from \eqref{eqn-3.14}, one has
\begin{equation}\label{eqn-3.16}
  \frac{1-\int_I\sqrt{g_ng_{0,n}}\,w(x)\,dx}{1-\int_I\sqrt{gg_0}\,w(x)\,dx}
  \geq
  (1-a_n\gamma_0)
  -\frac{4\sqrt{a_n\gamma_0(1-a_n\gamma_0)}}{K^2\varepsilon_n^2}, 
\end{equation}
on $A_n^c$, since the denominator on the right-hand side of \eqref{eqn-3.13} has a lower bound $d^2_H(g,g_0)\geq K^2\varepsilon_n^2$. 

It is straight-forward to verify that the right-hand side of \eqref{eqn-3.16} admits a strictly positive lower bound independent of $n$ whenever \eqref{an-cond} holds. This implies \eqref{eqn-3.13} for some $\alpha>0$, proving \eqref{dH-transfer}.
\end{proof}

Here, we address the impossibility of directly applying Theorem \ref{th-shen} to $g_n,g_{0,n}\in\G_{a_n}$, even though they have a positive lower bound $a_n$, which arises from two violations of its conditions:
\begin{enumerate}
    \item[1.] For condition \eqref{th-shen-3}, the neighborhood 
    \[
        S_{a_n}(t)
  :=\left\{g\in\G_{a_n}\left|\, \max\{D(g_{0,n},g),V(g_{0,n},g)\}\leq t\right.\right\}
  \]
 shrinks as $a_n\to0$. The prior concentration must therefore be adapted according to $a_n$.
    \item[2.] For condition \eqref{th-shen-1}, this violation directly  impairs the convergence rate. If relating the Hellinger distance with the $L_{2,w}$-norm similarly as in \eqref{l2-h}, then $     d_H^2(u_i,l_i)\leq\frac1{a_n}\Vert u_i-l_i \Vert_{L_{2,w}}^2$. By \eqref{proof-th1-H}, we obtain
\begin{align*}
    \int_{r_n^2/2^8}^{\sqrt{2}r_n}
H_{[~]}^{1/2}\left(\varepsilon,\Omega^{\G_{a_n}}_n,d_H\right)d\varepsilon
  &\lesssim
\int_{r_n^2/2^8}^{\sqrt{2}r_n}H_{[~]}^{1/2}\left(\sqrt{a_n}\varepsilon,\Omega^{\G_{a_n}}_n,L_{2,w}\right)d\varepsilon\\
&\lesssim a_n^{-\frac{1}{4p}}\int_{r_n^2/2^8}^{\sqrt{2}r_n}
  \varepsilon^{-\frac{1}{2p}}d\varepsilon
  \lesssim a_n^{-\frac{1}{4p}}r_n^{1-\frac1{2p}}.
\end{align*}
The condition \eqref{th-shen-1} holds, only if $r_n\gtrsim a_n^{-\frac1{2(2p+1)}}n^{-\frac p{2p+1}}$. Consequently, the convergence rate $\varepsilon_n=\max\{r_n,t_n^{1/2}\}\geq r_n$ in Theorem \ref{th-shen} is strictly slower than the optimal one $\varepsilon_n=n^{-\frac p{2p+1}}$, since $a_n \to 0$ as $n \to \infty$.  
\end{enumerate}

To address these issues, we state a modified version of Theorem \ref{th-shen} adapted to $g_n,g_{0,n}\in\G_{a_n}$.

\begin{theorem}[Modification Theorem \ref{th-shen}]\label{th-modified shen}
Suppose that there exists a sequence $r_n\to0$ and a decreasing sequence $a_n\ll r_n^2$, and a sequence of subsets $\Omega^{\G_{a_n}}_n\subset\Omega^{\G_{a_{n+1}}}_{n+1}\subset\cdots\subset\Omega^{\G}$, such that
\begin{equation}\label{eqn-3.20}
  \int_{r_n/2^8}^{\sqrt{2}r_n}
  H^{1/2}_{[~]}\!\left(u,\Omega^{\G_{a_n}}_n,L_{2,w}\right)\,du
  \lesssim \sqrt{n}r_n^2,
\end{equation}
and there exist sequences $b_n\to\infty$ and $t_n\to0$ such that $a_nb_n\leq n$, $a_nb_nt_n\to\infty$, and
\begin{equation}\label{eqn-3.21}
  \pi\left(S_{a_n}(t_n)\right)\gtrsim \exp(-b_na_nt_n).
\end{equation}
Let $\varepsilon_n=\max\{r_n,2\sqrt{2t_n}\}$, and assume further that
\begin{equation}\label{eqn-3.22}
  \pi\left(\left(\Omega^{\G_{a_n}}_n\right)^c\right)\leq \exp\left(-n\varepsilon_n^2\right).
\end{equation}
Then for sufficiently large $K>0$, if $n\varepsilon_n^2\to\infty$, it holds that
\begin{equation}\label{eqn-3.29}
  \pi\left(A_{n,\btheta_n}^c|\bY^n\right)\lesssim \exp(-Cn\varepsilon_n^2)\to0,
\end{equation}
where $A_{n,\btheta_n}^c$ is defined in \eqref{def-An-theta-n}.
\end{theorem}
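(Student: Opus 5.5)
We follow the standard Shen--Wasserman / Ghosal--Ghosh--van der Vaart argument. Write the posterior probability in \eqref{prosterior} as $N_n/D_n$, with the likelihood ratios now taken relative to $g_{0,n}$ (equivalently $\btheta_n$). We then bound the denominator $D_n$ from below and the numerator $N_n$ from above, each on an event of probability tending to one.

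\emph{Step 1 (denominator).} We use the standard Chebyshev argument of \cite{ghosal2000}, Lemma 8.1. For any probability measure $\Pi$ supported on $S_{a_n}(t_n)$,
\[
\int\prod_i \frac{g(Y_i)}{g_{0,n}(Y_i)}\,d\Pi\ \ge\ \exp(-2nt_n)
\]
with probability at least $1-1/(nt_n)$. Combined with \eqref{eqn-3.21}, this gives
\[
D_n\gtrsim \exp\bigl(-b_na_nt_n-2nt_n\bigr)\ \ge\ \exp(-3nt_n),
\]
since $a_nb_n\le n$. Moreover $3nt_n\le \tfrac{3}{8}n\varepsilon_n^2$, because $\varepsilon_n\ge 2\sqrt{2t_n}$. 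The requirement $a_nb_nt_n\to\infty$ guarantees $nt_n\to\infty$, so the exceptional probability tends to zero. This is exactly why the factor $2\sqrt2$ and the product $b_na_n$ appear in the statement.

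\emph{Step 2 (sieve complement).} Split $N_n$ into the part over $(\Omega^{\G_{a_n}}_n)^c$ and the part over the sieve. By Fubini, the expectation of the first part is at most $\pi((\Omega_n^{\G_{a_n}})^c)\le e^{-n\varepsilon_n^2}$. Markov's inequality then shows that this part is at most $e^{-n\varepsilon_n^2/2}$ with probability tending to one. Dividing by $D_n$ leaves $\exp(-(1/2-3/8)n\varepsilon_n^2)\to0$.

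\emph{Step 3 (tests / likelihood-ratio bound on the sieve; main obstacle).} Here we replace Hellinger bracketing by $L_{2,w}$ bracketing. On $\G_{a_n}$ all densities are at least $a_n$, so as in \eqref{l2-h}
\[
d_H(l,u)\le a_n^{-1/2}\|u-l\|_{L_{2,w}},
\]
and this is the loss the statement must avoid. Our plan is to use the reverse, favorable inequality instead: for densities, $\|\sqrt{g}-\sqrt{g_0}\|^2\ge \|g-g_0\|_{L_{2,w}}^2/(\sqrt{\|g\|_\infty}+\sqrt{\|g_0\|_\infty})^2$. Alternatively, and more directly, we apply Wong--Shen's uniform likelihood-ratio inequality (\cite{shen2001}, Theorem 1 of Wong--Shen) with brackets measured in $L_{2,w}$, and convert $L_{2,w}$ brackets into Hellinger brackets on the set where $d_H\ge K\varepsilon_n$. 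For a bracket $[l,u]$ with $\|u-l\|_{L_{2,w}}\le\delta$, one has
\[
d_H^2(l,u)\le\int|u-l|\,w\le\sqrt{\gamma_0}\,\delta
\]
by Cauchy--Schwarz. So a Hellinger bracket of size $u$ arises from an $L_{2,w}$ bracket of size $\asymp u^2$. This explains the lower limit $r_n/2^8$ (instead of $r_n^2/2^8$) in \eqref{eqn-3.20} after the change of variables $u\mapsto u^2$. The condition $a_n\ll r_n^2$ enters because the brackets must respect the constraint $g\ge a_n$. We will lift lower brackets by $a_n$, or truncate them at $a_n$; this changes the bracket widths by $O(\sqrt{a_n\gamma_0})$ in Hellinger, which is $o(r_n)$. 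We expect this step to be the main difficulty: making the entropy integral for Wong--Shen's inequality, whose integration range is in Hellinger scale, match \eqref{eqn-3.20} exactly. If the conversion fails, we would instead build Le Cam--Birgé tests from the $L_{2,w}$ covering, as in Remark \ref{remark-ghosal}. The outcome of this step is
\[
\sup_{\bbeta\in\Omega^{\G_{a_n}}_n,\ d_H(g,g_{0,n})>K\varepsilon_n}\ \prod_i \frac{g(Y_i)}{g_{0,n}(Y_i)}\le \exp(-cK^2n\varepsilon_n^2)
\]
outside a set of probability at most $4\exp(-cK^2n\varepsilon_n^2)$.

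\emph{Step 4 (conclusion).} Dividing the bound of Step 3 by the lower bound on $D_n$ from Step 1, and choosing $K$ large so that $cK^2>3/8$, gives a bound $\exp(-Cn\varepsilon_n^2)$. Adding the contribution from Step 2 yields \eqref{eqn-3.29}, because $n\varepsilon_n^2\to\infty$.
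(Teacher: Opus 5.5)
Your three-part skeleton matches the paper's: a Chebyshev lower bound for the denominator, Fubini--Markov on the sieve complement, and a Wong--Shen bound on the sieve. There is a genuine gap, however: throughout, you treat $g_{0,n}$ as if it generated the data. The $Y_i$ are i.i.d.\ from $g_0$; only the likelihood ratio is taken relative to $g_{0,n}$.

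\emph{Step~2.} Your Fubini bound is false as stated. In fact $\E_{g_0}\prod_i g(Y_i)/g_{0,n}(Y_i)=\bigl(\int_I g\,g_0/g_{0,n}\,w\,dx\bigr)^n$, and since $g_{0,n}\ge(1-a_n\gamma_0)g_0$ this is only $\le(1-a_n\gamma_0)^{-n}$. You therefore get $e^{-n\varepsilon_n^2/2+O(na_n)}$, and this is where $a_n\ll\varepsilon_n^2$ is needed.

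\emph{Step~3.} Wong--Shen's inequality bounds $\sup\prod_i g(Y_i)/p_0(Y_i)$ only when $p_0$ is the sampling density. It cannot be used with reference $g_{0,n}$ and separation $d_H(g,g_{0,n})>K\varepsilon_n$. The missing idea is Lemma~\ref{thm-95}:
\begin{itemize}
\item factor $\prod_i g/g_{0,n}=\prod_i g/g_0\cdot\prod_i g_0/g_{0,n}$;
\item pass to $d_H(g,g_0)\ge K\varepsilon_n-\sqrt{2\gamma_0a_n}$ via the triangle inequality and $d_H^2(g_0,g_{0,n})\le 2\gamma_0a_n$;
\item apply Wong--Shen to the first factor against the true $g_0$;
\item bound the second factor by Markov, using $\E_{g_0}[g_0/g_{0,n}]\le(1-a_n\gamma_0)^{-1}$.
\end{itemize}
This is where $a_n\ll r_n^2$ really enters. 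Lifting or truncating brackets is unnecessary, since brackets need not lie in $\G_{a_n}$. Equivalently, you could use $\P^n_{g_0}(E)\le(1-a_n\gamma_0)^{-n}\P^n_{g_{0,n}}(E)$, which transfers exponentially small exceptional probabilities.

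\emph{Step~1.} That change-of-measure device does not rescue Step~1. There the exceptional probability is only $1/(nt_n)$, and $S_{a_n}(t_n)$ is centred at $g_{0,n}$. You must redo the Chebyshev argument under $g_0$, starting from
\[
\E_{g_0}\log(g/g_{0,n})=-(1-a_n\gamma_0)^{-1}\Bigl[D(g_{0,n},g)+a_n\int_I\log(g/g_{0,n})\,w\,dx\Bigr].
\]
The last term is $O(\sqrt{a_nt_n})=o(\varepsilon_n^2)$ by Cauchy--Schwarz and $a_n\le g_{0,n}$. The second moment is at most $(1-a_n\gamma_0)^{-1}(t_n+t_n^2)$, since $g_0\le g_{0,n}/(1-a_n\gamma_0)$. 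The paper passes over this point too.

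\emph{The entropy conversion in Step~3.} You leave this step open, and your heuristic for it is incorrect. \eqref{eqn-3.3} only gives $H_{[\,]}(\delta,\cdot,d_H)\le H_{[\,]}(\delta^2/\sqrt{\gamma_0},\cdot,L_{2,w})$. After the substitution $v=u^2/\sqrt{\gamma_0}$, the Wong--Shen range $u\in[\epsilon^2/2^8,\sqrt2\epsilon]$ becomes $L_{2,w}$-radii of order $[\epsilon^4,\epsilon^2]$ with weight $v^{-1/2}$. That is not the range $[r_n/2^8,\sqrt2r_n]$ of \eqref{eqn-3.20}. For $L_{2,w}$-entropy of order $u^{-1/p}$, this route certifies only $\epsilon\gtrsim n^{-p/(2p+2)}$, up to logarithms.

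Following the paper will not close this step. Lemma~\ref{thm-95} bounds the integrand by $H_{[\,]}(\epsilon^2/2^8,d_H)$ and then asserts $H_{[\,]}(\epsilon^2/2^8,d_H)\lesssim H_{[\,]}(\sqrt2\epsilon,L_{2,w})$ ``by \eqref{eqn-3.3}''. But \eqref{eqn-3.3} yields that comparison only at $L_{2,w}$-radius $\asymp\epsilon^4$, not $\sqrt2\epsilon$.

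Your fallback, Le Cam--Birg\'e tests from an $L_{2,w}$ cover, hits the same wall. It would naturally give an $L_{1,w}$ statement at rate $\varepsilon_n$. But an $L_{2,w}$-ball of radius $\varepsilon$ lies in general only in a Hellinger ball of radius $\asymp\sqrt{\varepsilon}$, or $\asymp\varepsilon/\sqrt{a_n}$ if you use $g\ge a_n$. That is exactly the loss the theorem claims to avoid. As written, the Hellinger numerator bound on the sieve, which is the core of the theorem, is not established.
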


Notice that the prior concentration condition \eqref{eqn-3.21} is adapted to $a_n$, and condition \eqref{th-shen-1} of Theorem \ref{th-shen} is replaced by the entropy with respect to the $L_{2,w}$-distance (instead of the Hellinger distance), which completely eliminates the deterioration of the convergence rate. The proof of Theorem \ref{th-modified shen} is analogous to Theorem 4 in \cite{shen2001} (Theorem \ref{th-shen}); its key step is a uniform exponential bound for likelihood ratios. We defer the proof of Theorem \ref{th-modified shen} until after establishing this uniform bound in Lemma \ref{thm-95} below.

\begin{lemma}[exponential bound for likelihood ratios]\label{thm-95}
Given a positive decreasing sequence $a_n\downarrow0$, then $g_{0,n}\in \G_{a_n}$ is shifted from $g_0$ by \eqref{tilde}. For any $\epsilon_n>0$, assume that
\begin{equation}\label{eqn-3.171}
  \int_{\epsilon_n/2^8}^{\sqrt{2}\epsilon_n}
  H^{1/2}_{[~]}\left(u,\Omega^{\G_{a_n}},L_{2,w}\right)\,du
  \lesssim \sqrt{n}\,\epsilon_n^2.
\end{equation}
Then there exist constants $C_4,C_5>0$, independent of $n$, such that
\begin{align}\label{eqn-3.20-lemma}
  &\P_0^*\!\left(
  \sup_{\{g\in\G_{a_n}|\, d_H(g,g_0)\geq \epsilon_n\}}
  \prod_{i=1}^n\frac{g(Y_i)}{g_{0,n}(Y_i)}
  \geq \exp\!\left(-\frac{1}{2}C_4n\epsilon_n^2\right)
  \right) \notag\\
  &\hspace{2cm}\lesssim
  \exp\!\left(-C_5n\epsilon_n^2\right)
  + (1-a_n\gamma_0)^{-n}\exp\!\left(-\frac{1}{2}C_4n\epsilon_n^2\right).
\end{align}
\end{lemma}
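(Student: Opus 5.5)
The plan is to adapt the chaining/peeling argument behind Theorem~1 of Wong and Shen (the uniform likelihood-ratio bound underlying Theorem~4 of \cite{shen2001}, i.e.\ Theorem~\ref{th-shen}). The new feature is that the denominator is $g_{0,n}$ rather than $g_0$, while the data are drawn from $g_0$.

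\emph{Step 1: change of measure.} The likelihood ratio is split as
\[
\prod_{i=1}^n\frac{g(Y_i)}{g_{0,n}(Y_i)}=\prod_{i=1}^n\frac{g(Y_i)}{g_0(Y_i)}\cdot\prod_{i=1}^n\frac{g_0(Y_i)}{g_{0,n}(Y_i)},
\]
and the two factors are handled separately. Since $g_{0,n}=a_n+(1-a_n\gamma_0)g_0\geq(1-a_n\gamma_0)g_0$, the second factor is bounded deterministically by $(1-a_n\gamma_0)^{-n}$. This gives
\[
\P_0^*\Big(\sup\prod_i\frac{g(Y_i)}{g_{0,n}(Y_i)}\geq e^{-\frac12C_4n\epsilon_n^2}\Big)\le\P_0^*\Big(\sup\prod_i\frac{g(Y_i)}{g_0(Y_i)}\geq(1-a_n\gamma_0)^{n}e^{-\frac12C_4n\epsilon_n^2}\Big).
\]
Two routes follow. (a) If the threshold stays of order $e^{-cn\epsilon_n^2}$, apply the Wong--Shen bound under $g_0$ directly to get $e^{-C_5n\epsilon_n^2}$. (b) Otherwise, use Markov's inequality on a single fixed $g$: $\E_0\prod g/g_{0,n}\le(1-a_n\gamma_0)^{-n}$. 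This is exactly the second term of \eqref{eqn-3.20-lemma}. The intended structure is therefore: the supremum part is handled by chaining and contributes $e^{-C_5n\epsilon_n^2}$, and the residual single-point Markov step contributes $(1-a_n\gamma_0)^{-n}e^{-\frac12C_4n\epsilon_n^2}$.

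\emph{Step 2: brackets and one-sided approximation.} Build $L_{2,w}$ brackets $[l_j,u_j]$ of $\G_{a_n}$ at the scales $2^{-k}\epsilon_n$ inside $[\epsilon_n/2^8,\sqrt2\epsilon_n]$, using \eqref{eqn-3.171}. The key observation is that upper brackets may be taken $\geq a_n$, and for functions bounded below by $a_n$ one has $d_H(u,l)\le\|u-l\|_{L_{2,w}}/(2\sqrt{a_n})$. To avoid paying $a_n^{-1/2}$, I would instead truncate the log-ratio as in Wong--Shen, $\log(u_j/g_{0,n})\vee(-\tau)$ with a fixed $\tau$. Because $g_{0,n}\geq a_n$, the ratios $u/g_{0,n}$ are controlled, and the Bernstein-type moment bound for $\sqrt{u_j/g_{0,n}}-1$ under $\P_0$ only needs the $L_{2,w}$ size of $u_j-l_j$ weighted by $g_0/g_{0,n}\le(1-a_n\gamma_0)^{-1}$. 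For $g$ with $d_H(g,g_0)\ge\epsilon_n$, the mean satisfies
\[
\E_0\sqrt{u/g_{0,n}}\le 1-c\,d_H^2(g,g_{0,n})+O(\|u-g\|_{L_{2,w}}),
\]
and $d_H(g,g_{0,n})\gtrsim\epsilon_n$ by a triangle-type estimate like Lemma~\ref{lem-3.1}. Each increment is then a centered empirical process with sub-exponential increments.

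\emph{Step 3: chaining and peeling.} Peel the set $\{d_H(g,g_0)\ge\epsilon_n\}$ into shells $2^s\epsilon_n\le d_H<2^{s+1}\epsilon_n$. On each shell, apply the Bernstein chaining inequality (Theorem~5.11 of van de Geer, or Wong--Shen Lemma~4), whose entropy requirement is exactly \eqref{eqn-3.171}. This gives a shell bound $\exp(-cn2^{2s}\epsilon_n^2)$, and summing over $s$ yields $C_5$.

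The main obstacle is Step~2: obtaining the chaining increments' variance bound from $L_{2,w}$ bracket size without an $a_n^{-1}$ factor. This is where the lower bound $g_{0,n}\ge(1-a_n\gamma_0)g_0$, used in the reverse direction, must substitute for the Hellinger entropy. If it fails, I would fall back on the Markov argument over the finite bracket centers, absorbing the cardinality into $e^{-\frac12C_4n\epsilon_n^2}$ via the entropy bound.
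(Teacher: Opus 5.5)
\textbf{Genuine gap.} Your Step~1 factorization is the same one the paper uses, and the pointwise bound $g_0/g_{0,n}\le(1-a_n\gamma_0)^{-1}$ is correct. Route~(b), however, does not work. Markov's inequality applied to $\prod_i g(Y_i)/g_{0,n}(Y_i)$ at the threshold $e^{-\frac12C_4n\epsilon_n^2}<1$ gives $e^{+\frac12C_4n\epsilon_n^2}(1-a_n\gamma_0)^{-n}\ge1$, not the second term of \eqref{eqn-3.20-lemma}. A bound for one fixed $g$ also says nothing about the supremum. Your fallback of Markov over bracket centers has the same problem, unless you pass to $\sqrt{u_j/g_0}$, which again needs Hellinger-size brackets.

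The missing idea is to split the threshold by a union bound:
\[
\Bigl\{\sup_g\prod_i\tfrac{g(Y_i)}{g_{0,n}(Y_i)}\ge e^{-\frac12C_4n\epsilon_n^2}\Bigr\}\subset\Bigl\{\sup_g\prod_i\tfrac{g(Y_i)}{g_0(Y_i)}\ge e^{-C_4n\epsilon_n^2}\Bigr\}\cup\Bigl\{\prod_i\tfrac{g_0(Y_i)}{g_{0,n}(Y_i)}\ge e^{\frac12C_4n\epsilon_n^2}\Bigr\}.
\]
The second event does not involve $g$. So a single Markov inequality at the large threshold, with $\E_0(g_0/g_{0,n})\le(1-a_n\gamma_0)^{-1}$, gives exactly $(1-a_n\gamma_0)^{-n}e^{-\frac12C_4n\epsilon_n^2}$. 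The first event is precisely the Wong--Shen event with $g_0$ in the denominator, and the paper simply quotes their Theorem~1.

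Your deterministic bound would also close this part with no Markov step. Take $C_4$ to be the Wong--Shen threshold constant. If $(1-a_n\gamma_0)^n\ge e^{-\frac12C_4n\epsilon_n^2}$, your threshold is at least $e^{-C_4n\epsilon_n^2}$ and route~(a) applies. Otherwise the second term on the right of \eqref{eqn-3.20-lemma} exceeds $1$, and there is nothing to prove. Either way, re-running the chaining with $g_{0,n}$ in the denominator (Steps~2--3) is unnecessary.

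\textbf{What remains open is the entropy hypothesis.} Wong--Shen's Theorem~1 needs a Hellinger bracketing-entropy integral over $[\epsilon_n^2/2^8,\sqrt2\epsilon_n]$. The hypothesis \eqref{eqn-3.171} instead controls $L_{2,w}$ brackets, and your proposal never connects the two.

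Your Step~2 claim that the Bernstein variance only involves the $L_{2,w}$ size of $u_j-l_j$ is not right. In fact
\[
\E_0\bigl(\sqrt{u_j/g_{0,n}}-\sqrt{l_j/g_{0,n}}\bigr)^2\le(1-a_n\gamma_0)^{-1}d_H^2(u_j,l_j),
\]
so the factor $g_0/g_{0,n}$ is harmless, but the bracket size that enters is Hellinger. The bound $g_{0,n}\ge(1-a_n\gamma_0)g_0$ cannot change that.

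The paper bridges the gap with the $a_n$-free inequality $d_H^2(f_1,f_2)\le\|f_1-f_2\|_{L_{1,w}}\le\sqrt{\gamma_0}\|f_1-f_2\|_{L_{2,w}}$ together with monotonicity of $H_{[~]}$. You would need to supply such a step, and do it carefully. That inequality only gives $H_{[~]}(u,\cdot,d_H)\le H_{[~]}(u^2/\sqrt{\gamma_0},\cdot,L_{2,w})$. So the comparison $H_{[~]}(\epsilon^2/2^8,\cdot,d_H)\lesssim H_{[~]}(\sqrt2\epsilon,\cdot,L_{2,w})$ displayed in the paper does not follow from it as written. The obstacle you flagged at the end is real, and it sits here rather than in the change of measure.
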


\begin{proof}
This argument is similar as those in Theorem 3 (i), \cite{wong-shen-1995}. Let $g\in\G_{a_n}$, then
     \begin{align}\label{eqn-3.6}
   \P_0^*\left(\sup_{\{g\in\G_{a_n}|\,d_H(g,g_0)\geq \epsilon_n\}}\prod_{i=1}^n\frac{g(Y_i)}{g_{0,n}(Y_i)}\geq\exp\left(-\frac12 C_4n\epsilon_n^2\right)   \right)\leq P_1+P_2,
    \end{align}
    where 
    \begin{align*}
P_1:=&\P_0^*\left(\sup_{\{g\in\G_{a_n}|\,d_H(g,g_0)\geq\epsilon_n\}}\prod_{i=1}^n\frac{g(Y_i)}{g_0(Y_i)}\geq\exp(-C_4n\epsilon_n^2)\right),
    \end{align*}
    and 
    \begin{align*}
P_2:=\P_0\left(\prod_{i=1}^n\frac{g_0(Y_i)}{g_{0,n}(Y_i)}\geq\exp\left(\frac12C_4n\epsilon_n^2\right)\right).
    \end{align*}

 To bound $P_2$, using the Markov inequality, one has
    \begin{align}\label{eqn-3.19}\notag
    P_2=&\P_0\left(\prod_{i=1}^n\frac{g_0(Y_i)}{g_{0,n}(Y_i)}\geq\exp\left(\frac12C_4n\epsilon_n^2\right)\right) 
    \leq\exp\left(-\frac12C_4n\epsilon_n^2\right)\prod_{i=1}^n\E_0\left(\frac{g_0(Y_i)}{g_{0,n}(Y_i)}\right)\\
    \leq&(1-a_n\gamma_0)^{-n}\exp\left(-\frac12C_4n\epsilon_n^2\right),
    \end{align}
  where the last inequality follows from 
    \begin{align*}
    \E_0\left(\frac{g_0(Y_i)}{g_{0,n}(Y_i)}\right)
    =&\int_I\frac{g_0^2}{g_{0,n}}wdx=\int_I\frac{g_0^2}{(1-a_n\gamma_0)g_0+a_n}wdx
    \leq\frac1{1-a_n\gamma_0}\int_Ig_0wdx\\
    =&\frac1{1-a_n\gamma_0}.
    \end{align*}
    
To bound $P_1$, we shall use Theorem 1, \cite{wong-shen-1995}, which asserts that for any $\epsilon>0$, if 
\begin{align}\label{mod-th1-int}
    \int_{\epsilon^2/2^8}^{\sqrt2\epsilon}
H^{1/2}_{[~]}\left(u,\Omega^{\G_{a_n}},d_H\right)du
\lesssim  \sqrt{n}\epsilon^2,
\end{align}
then $\displaystyle \P^*\left(
\sup_{\substack{\{g\in\mathcal \G_{a_n}|\,d_H(g,g_0)\ge \epsilon\}}}
\prod_{i=1}^n\frac{g(Y_i)}{g_{0}(Y_i)}
\geq \exp(-C_4 n\epsilon^2)
\right)
\lesssim\exp(-C_5 n\epsilon^2)$.
If we can verify \eqref{eqn-3.171} implies \eqref{mod-th1-int}, then by letting $\epsilon=\epsilon_n$, \begin{equation}\label{eqn-3.18}
        P_1\lesssim\exp\{-C_5n\epsilon_n^2\}.
    \end{equation}
    
To show \eqref{eqn-3.171}$\Rightarrow$\eqref{mod-th1-int}, let us start from the relation between the Hellinger distance and the $L_{2,w}$-norm: for any $f_1,f_2\in\G$, 
\begin{align}\label{eqn-3.3}
    d_H^2(f_1,f_2) \leq &\Vert f_1-f_2\Vert_{L_{1,w}}=\int _I \left|f_1(x)-f_2(x)\right|w(x)dx\\\notag
    \leq& \left(\int _I\left(f_1(x)-f_2(x)\right)^2w(x)dx\right)^{1/2}\,dx
    \left(\int _Iw(x)dx\right)^{1/2}
    =\sqrt{\gamma_0} \Vert f_1-f_2\Vert_{L_{2,w}}.
\end{align}
Consequently, 
\begin{align*}
    \int_{\epsilon^2/2^8}^{\sqrt2\epsilon}H_{[~]}^{1/2}(u,\Omega^{\G_{a_n}},d_H)du
    \leq&\int_{\epsilon^2/2^8}^{\sqrt2\epsilon}H_{[~]}^{1/2}(
    \epsilon^2/2^8,\Omega^{\G_{a_n}},d_H)du\\
   \overset{\eqref{eqn-3.3}} \lesssim&\int_{\epsilon^2/2^8}^{\sqrt2\epsilon}H_{[~]}^{1/2}(
    \sqrt2\epsilon,\Omega^{\G_{a_n}},L_{2,w})du
    \leq\int_{\epsilon^2/2^8}^{\sqrt2\epsilon}\sqrt n\epsilon du
    \leq\sqrt n\epsilon^2,
\end{align*}
where the last inequality but one is due to 
\begin{align*}
     H^{1/2}_{[~]}(\sqrt2\epsilon,\Omega^{\G_{a_n}},L_{2,w})
    \lesssim&
    \frac{1}{\epsilon}
    \int_{\epsilon/2^8}^{\sqrt2\epsilon} H^{1/2}_{[~]}(\sqrt2\epsilon,\Omega^{\G_{a_n}},L_{2,w})du\\
    \lesssim
    & \frac{1}{\epsilon}
    \int_{\epsilon/2^8}^{\sqrt2\epsilon} H^{1/2}_{[~]}(u,\Omega^{\G_{a_n}},L_{2,w})du\overset{\eqref{eqn-3.171}}\leq\sqrt n\epsilon,
\end{align*}
by the decreasing monotonicity of $H_{[~]}(u,\Omega^{\G_{a_n}},L_{2,w})$ in $u$. Finally, \eqref{eqn-3.20-lemma} follows immediately from \eqref{eqn-3.6}, \eqref{eqn-3.19} and \eqref{eqn-3.18}.
\end{proof}

We now proceed to show Theorem \ref{th-modified shen}. The argument is similar as that in Theorem \ref{th-shen}, or Theorem 2, \cite{shen2001}.
\begin{proof}[Proof of Theorem \ref{th-modified shen}]
  Let us write
\begin{align}\label{eqn-3.28}
    \pi\left(A_{n,\btheta_n}^c|\bY^n\right)\overset{\eqref{prosterior}}=\frac{m_n\left(A_{n,\btheta_n}^c,\bY^n\right)}{m_n\left(\Omega,\bY^n\right)}
    =\frac{m_n\left(A_{n,\btheta_n}^c\cap\Omega_n,\bY^n\right)}{m_n\left(\Omega,\bY^n\right)}+\frac{m_n\left(A_{n,\btheta_n}^c\cap\Omega_n^c
    ,\bY^n\right)}{m_n\left(\Omega,\bY^n\right)},
\end{align}
where $\Omega_n$ is in \eqref{eqn-O_M} and
$\displaystyle m_n\left(A,\bY^n\right):=\int_A\prod_{i=1}^n\frac{g(Y_i)}{g_{0,n}(Y_i)}d\pi(\bbeta)$. Similar as the arguments in Theorem 2, \cite{shen2001}, we shall bound the numerators \linebreak[4]$m_n\left(A_{n,\btheta_n}^c\cap\Omega_n,\bY^n\right)$, $m_n\left(A_{n,\btheta_n}^c\cap\Omega_n^c,\bY^n\right)$ and the denominator $m_n\left(\Omega,\bY^n\right)$ in \eqref{eqn-3.28}, separately.

Let us bound the numerator $m_n\left(A_{n,\btheta_n}^c\cap\Omega_n,\bY^n\right)$ first. By the triangular inequality, we have
\[
    Kr_n\leq d_H(g_{0,n},g)\leq d_H(g_{0,n},g_0)+d_H(g_0,g)\leq \sqrt{2\gamma_0a_n}+d_H(g_0,g),
\]
since
\begin{align*}
    d^2_H(g_{0,n},g_0)=&2-2\int_I\sqrt{g_0g_{0,n}}wdx
    =2-2\int_I\sqrt{g_0((1-a_n\gamma_0)g_0+a_n)}wdx\\
    \leq&2-2\int_I\sqrt{1-a_n\gamma_0}g_0wdx
    =2-2\sqrt{1-a_n\gamma_0}\leq2\gamma_0a_n.
\end{align*}
Thus, for any $g\in A_{n,\btheta_n}^c\cap\Omega_n$, 
\begin{equation}\label{eqn-3.30}
    d_H(g_0,g)\geq Kr_n-\sqrt{2\gamma_0a_n}.
\end{equation}

From \eqref{eqn-3.20-lemma} with $\varepsilon=Kr_n-\sqrt{2\gamma_0a_n}$, by Lemma \ref{thm-95}, one has 
\begin{align}\label{eqn-3.24}\notag
    &\P_0^*\left(\sup_{\left\{g\in\Omega^{\G_{a_n}}_n|\,d_H(g,g_0)\geq Kr_n-\sqrt{2\gamma_0a_n}\right\}}\prod_{i=1}^n\frac{g(Y_i)}{g_{0,n}(Y_i)}\geq\exp\left(-\frac12 C_1n(Kr_n-\sqrt{2\gamma_0a_n})^2\right)   \right)\\
    \lesssim&\exp(-Cn(Kr_n-\sqrt{2\gamma_0a_n}))^2). 
\end{align}
Therefore, 
\begin{align}\label{eqn-3.25}\notag
   & m_n\left(A_{n,\btheta_n}^c\cap\Omega_n,\bY^n\right)
    =\int_{A_{n,\btheta_n}^c\cap\Omega_n}\prod_{i=1}^n\frac{g(Y_i)}{g_{0,n}(Y_i)}d\pi(\bbeta)\\\notag
   & \overset{\eqref{eqn-3.30},\eqref{eqn-3.24}}\leq\int_{\left\{g\in\G_{a_n}|\,d_H(g,g_0)\geq Kr_n-\sqrt{2\gamma_0a_n}\right\}} \sup_{\left\{g\in\G_{a_n}|\,d_H(g,g_0)\geq Kr_n-\sqrt{2\gamma_0a_n}\right\}} \prod_{i=1}^n\frac{g(Y_i)}{g_{0,n}(Y_i)} d\pi(\bbeta)\\
   & \leq\exp\left(-\frac12 C_1n(Kr_n-\sqrt{2\gamma_0a_n}))^2\right),
\end{align}
with probability tending to one, if $n(r_n-\sqrt{a_n})^2\rightarrow\infty$.

The lower bound of $m_n\left(\Omega,\bY^n\right)$ is exactly the same as those in Lemma 1, Section 3, \cite{shen2001}. We state the result here. 
\begin{align*}
    \P_0\left(m_n\left(\Omega,\bY^n\right)\leq\frac12\pi\left(S_{a_n}(t_n)\right)e^{-2nt_n}\right)\lesssim\frac1{nt_n}.
\end{align*}
Consequently, there exists positive sequence $b_n\to\infty$, such that $(a_nb_n+n)t_n\to\infty$, then
\begin{equation}\label{eqn-3.26}
    m_n(\Omega,\bY^n)\geq\frac12\pi(S_{a_n}(t_n))e^{-2nt_n}\overset{\eqref{eqn-3.21}}\gtrsim\exp(-2nt_n-b_na_nt_n),
\end{equation}
except on a set of probability tending to $0$.

Lastly, the upper bound of $m_n(A_{n,\btheta_n}^c\cap\Omega_n^c,\bY^n)$ follows the same argument in Theorem 4, \cite{shen2001}. By the Markov's inequality and Fubini's theorem,  we have
\begin{align}\label{79}\notag
&\P_0\left(m_n\left(A_{n,\btheta_n}^c\cap\Omega_n^c,\bY^n\right)\geq e^{-\frac12n\varepsilon_n^2}\right)
    \leq e^{\frac{n\varepsilon_n^2}2}\E_0\left[m_n\left(A_{n,\btheta_n}^c\cap\Omega_n^c,\bY^n\right) \right]\\
    \notag
=&e^{\frac{n\varepsilon_n^2}2}\int_{A_{n,\btheta_n}^c\cap\Omega_n^c}\int_{\mathcal{Y}^n}\prod_{i=1}^n\frac{g(y_i)}{g_{0,n}(y_i)}d\P_0(y^n)d\pi(\bbeta)\\\notag
=&e^{\frac{n\varepsilon_n^2}2}\int_{A_{n,\btheta_n}^c\cap\Omega_n^c}\int_{\mathcal{Y}^n}\prod_{i=1}^n\frac{g_0(y_i)}{g_{0,n}(y_i)}g(y_i)w^n(y_1)dy^nd\pi(\bbeta)\\
  \overset{\eqref{tilde}} \leq&e^{\frac{n\varepsilon_n^2}2}(1-a_n\gamma_0)^{-n}\int_{A_{n,\btheta_n}^c\cap\Omega_n^c}\int_{\mathcal{Y}^n}\prod_{i=1}^ng(y_i)w^n(y_1)dy^nd\pi(\bbeta)\\\notag
=&e^{\frac{n\varepsilon_n^2}2}(1-a_n\gamma_0)^{-n}\pi(A_{n,\btheta_n}^c\cap\Omega_n^c)
    \leq 
    e^{\frac{n\varepsilon_n^2}2}
    (1-a_n\gamma_0)^{-n}
    \pi\left(\Omega ^{\G_{a_n}}_n\right)^c
    \overset{\eqref{eqn-3.22}}
    \lesssim 
    e^{-\frac12n\varepsilon_n^2+na_n\gamma_0}.
\end{align}
Thus, 
\begin{equation}\label{eqn-3.27}
m_n\left(A_{n,\btheta_n}^c\cap\Omega_n^c,\bY^n\right)\leq e^{-\frac12n\varepsilon_n^2}
\end{equation}
in probability, if $a_n\ll\varepsilon_n^2$. 

Therefore, by letting $\varepsilon_n=\max\{r_n,2\sqrt{2t_n}\}$, \eqref{eqn-3.29} follows immediately by substituting \eqref{eqn-3.25}--\eqref{eqn-3.27} back into \eqref{eqn-3.28}, i.e.
\begin{align*}
    \pi\left(A_{n,\btheta_n}^c|\bY^n\right)
    \lesssim&\exp\left(-\frac12Cn(K\varepsilon_n-\sqrt{a_n})^2+(2n+a_nb_n)\frac{\varepsilon_n^2}8\right)\\
    & +\exp\left(-\frac12n\varepsilon_n^2+(2n+a_nb_n)\frac{\varepsilon_n^2}8\right)
    \lesssim\exp(-Cn\varepsilon_n^2),
\end{align*}
if $a_n\ll\varepsilon_n^2$ and $a_nb_n\ll n$.
  \end{proof}

Finally, we arrive at the analogous result in Theorem \ref{th-bound} for $g_0\in \G$ without the strictly positive lower bound.
  
\begin{theorem}\label{th-prior-general}
Let $g_0$ and $g\in\G$. The corresponding coefficients are $\btheta=(\theta_0,\theta_1,\cdots),\,\bbeta=(\eta_0,\eta_1,\cdots)\in\Omega^\G$, associated with $\G$. For any $\varepsilon_n\geq n^{-p/(2p+1)}$ and any integer $p\geq1$, suppose there exists a positive decreasing sequence $a_n\downarrow0$ and $a_n\lesssim \varepsilon_n^4$. 
Assume that there exists a sequence $b_n\to\infty$ satisfying $a_nb_n\leq n$, $a_nb_n\varepsilon_n^2\to\infty$ and a constant $C_6>0$, independent of $n$, such that
\begin{equation}\label{eqn-4.1}
  \pi\left(
  \left\{\bbeta\in\Omega_n^\G\left|\, \sum_{j=0}^{\infty}(\eta_j-\theta_j)^2\gamma_j \leq \frac{C_6a_n\varepsilon_n^2}{16(1-a_n\gamma_0)^2}\right.\right\}
  \right)
  \gtrsim \exp\left(-\frac{1}{8}C_6 a_n b_n\varepsilon_n^2\right),
\end{equation}
and
\begin{equation}\label{eqn-4.2}
  \pi\left(
  \left\{\bbeta\in\Omega^\G\left|\, \sum_{j=0}^{\infty}\eta_j^2\tilde\gamma_j>\frac {n^{1/(2p+1)}}{(1-a_n\gamma_0)^2}\right.\right\}
  \right)
  \lesssim \exp\left(-C_6 n\varepsilon_n^2\right),
\end{equation}
where $\pi$ be a prior on $\bbeta=(\eta_0,\eta_1,\cdots)\in\Omega^\G$, and $\tilde\gamma_j$ is defined in \eqref{eqn-2.3}. If $n\varepsilon_n^2\to\infty$, then for sufficiently large $K>0$,
\begin{equation}\label{eqn-4.3}
  \pi(A_{n}^c|\bY^n)
  \lesssim
  \exp\left(-\frac{nK^2\varepsilon_n^2}{2}\right)
  +\exp\left(-\frac{C_6n \varepsilon_n^2}{4}\right)
  \to0,
\end{equation}
except on a set of probability tending to $0$, where
$A_n^c$ is given in \eqref{An-c}.
\end{theorem}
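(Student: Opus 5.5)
The plan is to apply Theorem \ref{th-modified shen} to the shifted pair $(g_n,g_{0,n})\in\G_{a_n}$ and then pass back to $(g,g_0)$ with Lemma \ref{lem-3.1}. We take $r_n=\varepsilon_n$ and $t_n=C_6\varepsilon_n^2/16$, so that $2\sqrt{2t_n}\lesssim\varepsilon_n$. We use the sieve $\Omega_n^{\G_{a_n}}$ consisting of the images under the shift \eqref{eqn-2.1} of those $\bbeta$ satisfying $\sum_j\eta_j^2\tilde\gamma_j\le n^{1/(2p+1)}/(1-a_n\gamma_0)^2$. The hypothesis $a_n\lesssim\varepsilon_n^4$ gives $a_n\ll r_n^2$, and it is also exactly what Lemma \ref{lem-3.1} needs in \eqref{an-cond} once $K$ is large.

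The three conditions of Theorem \ref{th-modified shen} are checked in order.

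\textbf{Sieve mass \eqref{eqn-3.22}.} The shift is a bijection $\bbeta\mapsto\bbeta_n$, and for $j\ge1$ we have $\eta_{j,n}=(1-a_n\gamma_0)\eta_j$. So the sieve condition on $\bbeta_n$ corresponds to the scaled condition on $\bbeta$, up to the harmless $j=0$ term. Then \eqref{eqn-4.2} gives $\pi((\Omega_n^{\G_{a_n}})^c)\lesssim\exp(-C_6n\varepsilon_n^2)$, which is enough after absorbing constants, with $C_6\ge1$ or by rescaling.

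\textbf{Entropy \eqref{eqn-3.20}.} The sieve lies in $W^p_{2,w}$, so we argue as in \eqref{proof-th1-H} using Theorem 5.2 of \cite{b23}: $H_{[~]}(u,\cdot,L_{2,w})\lesssim u^{-1/p}$. Integrating gives $\varepsilon_n^{1-1/(2p)}\le\sqrt n\varepsilon_n^2$ whenever $\varepsilon_n\ge n^{-p/(2p+1)}$. No factor of $a_n$ enters, because the modified theorem measures entropy in $L_{2,w}$.

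\textbf{Prior concentration \eqref{eqn-3.21}.} By \eqref{eqn-3.5} and \eqref{eqn-2.1}, $\|g_n-g_{0,n}\|_{L_{2,w}}^2=(1-a_n\gamma_0)^2\sum_j(\eta_j-\theta_j)^2\gamma_j$, where the $j=0$ term also scales because the additive $a_n$ cancels. Suppose $\bbeta$ lies in the set of \eqref{eqn-4.1}. Repeating the computations \eqref{D} and \eqref{V} with $g_n,g_{0,n}\ge a_n$ gives
\[
\max\{D(g_{0,n},g_n),V(g_{0,n},g_n)\}\le a_n^{-1}\|g_n-g_{0,n}\|_{L_{2,w}}^2\le C_6\varepsilon_n^2/16=t_n .
\]
We must keep the $(1-a_n\gamma_0)^{-2}$ factor straight here: \eqref{eqn-4.1} has $(1-a_n\gamma_0)^2$ in the denominator, which yields a bound $\le t_n(1-a_n\gamma_0)^{-4}\cdots$. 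I expect this to be fixable by adjusting $t_n$ by a factor $1+o(1)$. Hence $\pi(S_{a_n}(t_n))\gtrsim\exp(-\tfrac18C_6a_nb_n\varepsilon_n^2)=\exp(-2b_na_nt_n)$, which is \eqref{eqn-3.21} after renaming $b_n$. The requirements $a_nb_n\le n$ and $a_nb_nt_n\to\infty$ follow from the hypotheses on $b_n$.

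With all three conditions in place, Theorem \ref{th-modified shen} gives $\pi(A^c_{n,\btheta_n}|\bY^n)\to0$ at an exponential rate. To transfer this, Lemma \ref{lem-3.1} gives $d_H(g_n,g_{0,n})\ge c\,d_H(g,g_0)>cK\varepsilon_n$ on $A_n^c$. So, after enlarging $K$, $A_n^c$ is contained in the preimage of $A^c_{n,\btheta_n}$. The posterior of $\bbeta$ and that of $\bbeta_n$ coincide under the bijection, with the likelihood ratios taken relative to the appropriate reference. This yields \eqref{eqn-4.3}, where the two exponential terms come from the numerator bounds \eqref{eqn-3.25} and \eqref{eqn-3.27} against the denominator bound \eqref{eqn-3.26}.

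The main obstacle is this last identification. The posterior in \eqref{prosterior} is built from $g(Y_i|\bbeta)/g(Y_i|\btheta)$ with the data drawn from $g_0$, while Theorem \ref{th-modified shen} controls ratios $g/g_{0,n}$ over $\G_{a_n}$. To reconcile them, I would first note that changing the denominator from $g_0$ to $g_{0,n}$ cancels between numerator and denominator of the posterior. I would then argue that replacing the model density $g$ by $g_n$ costs at most a factor $(1-a_n\gamma_0)^{-n}\le e^{2na_n\gamma_0}=e^{o(n\varepsilon_n^2)}$, since $na_n\lesssim n\varepsilon_n^4=o(n\varepsilon_n^2)$. This factor is absorbed into the exponential rates.
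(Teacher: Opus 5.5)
\textbf{Verdict: there is a genuine gap, in the final transfer from the shifted posterior to the original one.} Your route is the paper's: Theorem \ref{th-modified shen} applied to the shifted pair $(g_n,g_{0,n})$, then Lemma \ref{lem-3.1}. You rightly single out the final identification as the main obstacle, but you do not close it, because your comparison of likelihoods only works in one direction.

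From $g_n=a_n+(1-a_n\gamma_0)g\ge(1-a_n\gamma_0)g$ you get $\prod_i g(Y_i)\le(1-a_n\gamma_0)^{-n}\prod_i g_n(Y_i)$. So the numerator $\int_{A_n^c}\prod_i g(Y_i)/g_{0,n}(Y_i)\,d\pi(\bbeta)$ is at most $e^{o(n\varepsilon_n^2)}$ times its shifted counterpart. That counterpart is controlled by \eqref{eqn-3.25}, \eqref{eqn-3.27} and the inclusion from Lemma \ref{lem-3.1}, so this half is fine.

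The denominator, however, must be bounded \emph{below}, and for that you would need $g\gtrsim g_n$. But $g/g_n=(1-a_n/g_n)/(1-a_n\gamma_0)$ has no positive lower bound; it vanishes wherever $g$ does. Hence \eqref{eqn-3.26}, which concerns the shifted evidence $\int\prod_i g_n(Y_i)/g_{0,n}(Y_i)\,d\pi(\bbeta)$, says nothing about $\int\prod_i g(Y_i)/g_{0,n}(Y_i)\,d\pi(\bbeta)$. Bounding the latter from below by the argument behind \eqref{eqn-3.26} needs prior mass on $\{D(g_0,g)\le t,\ V(g_0,g)\le t\}$ for the \emph{unshifted} $g$. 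That is exactly the KL control that breaks down without a lower bound, and the $L_{2,w}$-ball condition \eqref{eqn-4.1} does not provide it.

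The paper's sketch is silent on this point. It passes from $\pi(A^c_{n,\btheta_n}|\bY^n)$ to $\pi(A_n^c|\bY^n)$ through the set inclusion of Lemma \ref{lem-3.1} alone. That step is legitimate only if both posteriors are built from the same shifted likelihood $\prod_i g_n(Y_i)$.
\begin{itemize}
\item If \eqref{eqn-4.3} is read that way, your last paragraph is superfluous and the rest of your argument matches the paper.
\item If it is read with the likelihood of \eqref{prosterior}, as in the estimator of Section \ref{sec-5}, an extra ingredient controlling the unshifted evidence is needed, and your $(1-a_n\gamma_0)^{-n}$ argument does not supply it.
\end{itemize}

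Three smaller remarks on the concentration step.
\begin{itemize}
\item The factor you worry about cancels exactly. Since $g_n-g_{0,n}=(1-a_n\gamma_0)(g-g_0)$, the set in \eqref{eqn-4.1} gives $\|g_n-g_{0,n}\|_{L_{2,w}}^2\le C_6a_n\varepsilon_n^2/16$ with no $1+o(1)$ adjustment.
\item The bound $V\le a_n^{-1}\|g_n-g_{0,n}\|_{L_{2,w}}^2$ is true, but not by literally repeating \eqref{V}: the inequality $|\log y|\le|y-1|$ used there fails for $y<1$. On $\{g_n<g_{0,n}\}$, use $\sqrt{x}\log x\le x-1$ with $x=g_{0,n}/g_n$. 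This gives $g_{0,n}\log^2(g_{0,n}/g_n)\le (g_{0,n}-g_n)^2/g_n\le a_n^{-1}(g_{0,n}-g_n)^2$.
\item Choosing $t_n=C_6\varepsilon_n^2/8$ yields \eqref{eqn-3.21} with the original $b_n$. This avoids your renaming of $b_n$, which would break $a_nb_n\le n$ by a factor of $2$.
\end{itemize}
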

\begin{remark}
   \begin{enumerate}
    \item[(1)] Theorem \ref{th-prior-general}  is more widely applicable than Theorem \ref{th-bound}, since condition \eqref{eqn-3.1} in Theorem \ref{th-bound} fails if the true density $g_0(x)$ has no strictly positive lower bound, i.e. $a=0$.
    \item[(2)] Condition \eqref{eqn-4.1} in Theorem \ref{th-prior-general} is stronger than \eqref{eqn-3.1} in Theorem \ref{th-bound}, since
    \begin{align*}
        \pi\left\{\bbeta\left|\,\sum_{j=0}^\infty(\eta_j-\theta_j)^2\gamma_j\leq C_3a\varepsilon_n^2\right.\right\}
        \geq& \pi\left\{\bbeta\left|\,\sum_{j=0}^\infty(\eta_j-\theta_j)^2\gamma_j\leq\frac{C_6a_n\varepsilon_n^2}{16(1-a_n\gamma_0)^2}\right.\right\}\\
        \overset{\eqref{eqn-4.1}}\gtrsim&\exp(-C_6a_nb_n\varepsilon_n^2)\gtrsim\exp(-C_3n\varepsilon_n^2).
    \end{align*}
This is the cost of whether the absence of a uniform positive lower bound is recognized.
    \end{enumerate}
\end{remark}
\begin{proof}[The sketch proof of Theorem \ref{th-prior-general}]
The proof is similar as that of Theorem~\ref{th-bound}, except that Theorem \ref{th-modified shen} (instead of Theorem \ref{th-shen}) is applied to the shifted pair $(g_n,g_{0,n})$ to obtain the convergence rate $\varepsilon_n$ of the shifted posterior $\pi\left(A_{n,\btheta_n}^c|\bY^n\right)$. Then Lemma \ref{lem-3.1} guarantee the same convergence rate of the original posterior $\pi\left(A_n^c|\bY^n\right)$, if $a_n\lesssim\varepsilon_n^4$.
\end{proof}
Theorem~\ref{th-prior-general} shows that posterior convergence at rate $\varepsilon_n$ under Hellinger loss follows from two conditions on the prior for the expansion coefficients: (i) sufficient prior mass in a weighted $L_2$-neighborhood of the true coefficients (weighted by $\gamma_j$), and (ii) exponentially small prior mass outside an appropriate sieve set (measured by $\tilde\gamma_j$). This highlights why, in many existing works, the prior on coefficients is constructed after fixing a specific basis.

\section{Construction of Sieve Priors}\label{sec-4}

In this section, we shall construct a sieve prior on $\bbeta\in\Omega^\G$ which satisfies the conditions in Theorem \ref{th-prior-general}, yielding the minimax optimal convergence rate $\varepsilon_n=n^{-p/(2p+1)}$.

\begin{theorem}\label{th-sieve-gaussian}
Let the parameter space $\Omega$ defined in \eqref{def-Omega} with further assumption that $\tilde\gamma_j\geq j^{7p}\gamma_j$, $j\geq1$, and $\pi$ be the sieve prior of $\bbeta\in\Omega$ of the form
\begin{equation}\label{pi}
  \pi(A):=\sum_{i=0}^{k_n} p_i\pi_{<i}(A),
\end{equation}
for any measurable set $A$, where $p_i\geq0$ are weights with $\sum_{i=0}^{k_n} p_i=1$, and $\pi_{<i}(\bbeta_{<i})=\prod_{j=1}^{i-1}\pi(\eta_j)$ is the truncated Gaussian sieve of $\bbeta_{<i}:=(\eta_0,\cdots,\eta_{i-1})$, i.e.
\begin{align}\label{gau-beta}
  \eta_0 &= 1, \notag\\
  \eta_j &\sim \N\!\left(0,\,j^{-2p}\gamma_j^{-1}\right),
  \qquad \textup{for } 1\leq j\leq i-1, \notag\\
  \eta_j &= 0,\qquad j\geq i,
\end{align}
for $i\leq k_n=\O\left(n^{\frac{6p+1}{7p(2p+1)}}\right)$, with $\gamma_j$ being the normalization constant in \eqref{p}. Then \eqref{eqn-4.3} in Theorem \ref{th-prior-general} holds for sufficiently large $K>0$.
\end{theorem}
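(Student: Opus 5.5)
The approach is to verify the two prior conditions \eqref{eqn-4.1} and \eqref{eqn-4.2} of Theorem \ref{th-prior-general} for the sieve prior \eqref{pi}. We take $\varepsilon_n=n^{-p/(2p+1)}$, $a_n\asymp\varepsilon_n^4$, and $b_n$ with $a_nb_n=n$, so that $a_nb_n\varepsilon_n^2=n\varepsilon_n^2\to\infty$. All other hypotheses are then routine, and \eqref{eqn-4.3} follows directly from that theorem.

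\emph{Sieve condition \eqref{eqn-4.2}.} Under $\pi_{<i}$ we can write $\eta_j=j^{-p}\gamma_j^{-1/2}Z_j$ with $Z_j$ i.i.d.\ standard normal. Then
\[
\sum_j\eta_j^2\tilde\gamma_j=\tilde\gamma_0+\sum_{j=1}^{i-1}j^{-2p}\frac{\tilde\gamma_j}{\gamma_j}Z_j^2 .
\]
Here $\tilde\gamma_j/\gamma_j$ must be controlled from above. The assumption $\tilde\gamma_j\geq j^{7p}\gamma_j$ is only a lower bound, so we fix the choice $\tilde\gamma_j\asymp j^{7p}\gamma_j$; this is admissible whenever \eqref{eqn-2.3} permits it, and we state that restriction explicitly. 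The sum is then at most $k_n^{5p}\sum_{j<k_n}Z_j^2$ up to constants. The event in \eqref{eqn-4.2} therefore forces $\sum_{j<k_n}Z_j^2\gtrsim n^{1/(2p+1)}k_n^{-5p}$. With $k_n\asymp n^{(6p+1)/(7p(2p+1))}$ the threshold exceeds $k_n$ by a polynomial factor, so a chi-square deviation bound gives probability $\exp(-c\,n^{1/(2p+1)}k_n^{-5p})$. The main task in this step is to check that the exponent dominates $C_6n\varepsilon_n^2=C_6n^{1/(2p+1)}$. If the exponent bookkeeping with $k_n$ does not close, the fallback is to note that $\pi$ puts zero mass beyond index $k_n$ and use the Gaussian tail of $\max_j|Z_j|$ directly. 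Either way, this exponent comparison is where I expect a genuine obstacle: the stated $k_n$ appears tuned so that $k_n^{7p}\approx n^{(6p+1)/(2p+1)}$, and the exponents must be matched carefully.

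\emph{Prior mass condition \eqref{eqn-4.1}.} The target radius is $\delta_n^2\asymp a_n\varepsilon_n^2\asymp\varepsilon_n^6$. We split the distance into a truncation part and a fluctuation part,
\[
\sum_j(\eta_j-\theta_j)^2\gamma_j\leq\sum_{j<i}(\eta_j-\theta_j)^2\gamma_j+\sum_{j\geq i}\theta_j^2\gamma_j .
\]
Note that $\eta_0=1$ matches $\theta_0$ only if $\gamma_0=1$ (normalization), which we will assume or renormalize.

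For the tail term, $g_0\in W^p_{2,w}$ together with the embedding of Lemma \ref{lem-1} gives $\sum_j\theta_j^2\tilde\gamma_j<\infty$, hence $\sum_{j\geq i}\theta_j^2\gamma_j\lesssim i^{-7p}$. We choose $i=i_n\leq k_n$ with $i_n^{-7p}\lesssim\varepsilon_n^6$, i.e.\ $i_n\asymp n^{6/(7(2p+1))}$, which is within $k_n$.

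For the head term, we lower bound $\pi\ge p_{i_n}\pi_{<i_n}$, take $p_{i_n}\gtrsim e^{-c\,i_n\log n}$ (for example uniform weights), and use the small-ball estimate for independent Gaussians. Each coordinate has to lie in an interval of $\gamma_j$-weighted half-width $\delta_n/\sqrt{i_n}$ around $\theta_j$. Its Gaussian density there is at least $\exp(-\theta_j^2j^{2p}\gamma_j)\times(\delta_n j^{p}/\sqrt{i_n})$, and $\sum_j\theta_j^2j^{2p}\gamma_j\leq\sum_j\theta_j^2\tilde\gamma_j<\infty$. This yields
\[
\pi\gtrsim\exp\bigl(-C\,i_n\log(i_n/\delta_n)\bigr)=\exp\bigl(-C\,n^{6/(7(2p+1))}\log n\bigr),
\]
which is far above $\exp(-\tfrac18C_6n\varepsilon_n^2)$ since $6/7<1$.

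Finally, we confirm that the prior is supported in $\Omega^\G$: nonnegativity of the truncated expansion is needed, so we either restrict to the event where it holds, or argue that the small ball around $g_0$ lies in $\G$ once intersected with $\Omega^{\G}_n$, as in \eqref{eqn-4.1}. We then invoke Theorem \ref{th-prior-general}.
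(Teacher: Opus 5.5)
\textbf{The gap is in your verification of \eqref{eqn-4.2}, and it is structural, not a bookkeeping slip.}

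\emph{The chi-square step is vacuous.} With $k_n\asymp n^{(6p+1)/(7p(2p+1))}$ one has $k_n^{5p}=n^{5(6p+1)/(7(2p+1))}\geq n^{5/(2p+1)}$. So your threshold $n^{1/(2p+1)}k_n^{-5p}$ tends to $0$ instead of exceeding $k_n$, and the chi-square bound gives nothing.

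\emph{The underlying reason.} Using only the hypothesis $\tilde\gamma_j\geq j^{7p}\gamma_j$, under $\pi_{<i}$ you have $\sum_j\eta_j^2\tilde\gamma_j\geq\sum_{j=1}^{i-1}j^{5p}Z_j^2$. Its mean is $\asymp i^{5p+1}$ and its relative fluctuations are $O(i^{-1/2})$. Hence $\pi_{<i}(\Omega_n^c)\to1$ for all $i\gg n^{1/((2p+1)(5p+1))}$, which includes both your $i_n$ and $k_n$. The fallback via $\max_j|Z_j|$ cannot help, because the obstruction is the deterministic growth $j^{5p}$ of the weights, not Gaussian tails.

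\emph{Weights cannot rescue it.} Condition \eqref{eqn-4.2} would have to come from the mixing weights alone, and your uniform weights give $\pi(\Omega_n^c)\geq p_{k_n}\pi_{<k_n}(\Omega_n^c)\gtrsim 1/k_n$. Other weights fail too. Since $\eta_j=0$ for $j\geq i$ under $\pi_{<i}$, only components with $\sum_{j\geq i}\theta_j^2\gamma_j\lesssim a_n\varepsilon_n^2\asymp\varepsilon_n^6$ can contribute to \eqref{eqn-4.1}. Unless the coefficients of $g_0$ decay extremely fast (roughly, faster than $\sum_{j\geq i}\theta_j^2\gamma_j\approx i^{-6p(5p+1)}$), all such $i$ lie where $\pi_{<i}(\Omega_n^c)\approx1$. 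As $a_nb_n\leq n$, \eqref{eqn-4.1} and \eqref{eqn-4.2} would then require $e^{-C_6n\varepsilon_n^2/8}\lesssim\sum_{i\geq i_{\min}}p_i\lesssim e^{-C_6n\varepsilon_n^2}$, which is impossible for large $n$. This ignores the positivity constraint in $\Omega^{\G}$, which neither you nor the paper exploits. Closing the argument would need a different prior scale than $j^{-2p}\gamma_j^{-1}$ or a different sieve, not a sharper tail estimate.

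\textbf{Comparison with the paper.} The paper's proof does not supply the missing idea either. It reduces \eqref{eqn-4.2} to $\pi(\Omega_n^c)\leq 1-\pi_{<2}(\Omega_n)$ plus a one-dimensional Gaussian tail for $\eta_1$. But $\pi_{<i}(\Omega_n)$ decreases in $i$ and $\pi_{<k_n}(\Omega_n)\to0$, so $\pi(\Omega_n^c)\geq p_{k_n}(1-o(1))$. Given the lower bound on $p_{k_n}$ the paper itself needs for \eqref{eqn-4.1}, $\pi(\Omega_n^c)$ cannot be below $e^{-C_6n\varepsilon_n^2}$, so that reduction is invalid.

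Your treatment of \eqref{eqn-4.1}, by contrast, is a genuinely different and sharper argument than the paper's. You truncate at $i_n\asymp n^{6/(7(2p+1))}$, using that $\sum_j\theta_j^2\tilde\gamma_j$ is a fixed constant, and bound a coordinatewise box. This gives exponent $i_n\log n=o(n\varepsilon_n^2)$ for every $p\geq1$. The paper truncates at $k_n$ via the crude bound $\sum_j\theta_{j,n}^2\tilde\gamma_j\leq n^{1/(2p+1)}$, and its polar-coordinate computation yields an exponent of order $k_n\log n$, which is not $O(n^{1/3})$ when $p=1$.

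\textbf{Corrections to your \eqref{eqn-4.1} step.}
\begin{itemize}
\item Lemma \ref{lem-1} gives $\sum_j\theta_j^2\tilde\gamma_j<\infty\Rightarrow g_0\in W^p_{2,w}$, not the converse. So $\btheta\in\Omega$ has to be assumed; the paper does this implicitly via $\btheta_n\in\Omega_n$.
\item Since \eqref{eqn-4.1} intersects the ball with $\Omega_n^{\G}$, you should check that your box lies in the sieve. It does under $\tilde\gamma_j\asymp j^{7p}\gamma_j$, because the perturbation contributes $\lesssim i_n^{7p}\delta_n^2=O(1)$ to $\sum_j\eta_j^2\tilde\gamma_j$.
\end{itemize}
Your remark that $\eta_0=1$ matches $\theta_0=1/\gamma_0$ only when $\gamma_0=1$ is correct. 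It is needed for \eqref{eqn-4.1} to be satisfiable at all.
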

\begin{proof}
We verify the two key ingredients: (i) a prior-mass lower bound near the true density \eqref{eqn-4.1} and (ii)  a tail bound \eqref{eqn-4.2}. For convenience, we work with the shifted coefficient $\bbeta_n$. By \eqref{eqn-2.1}, it is easy to transform the prior of $\bbeta$ to that of $\bbeta_n$:
\begin{align*}
    \eta_{0,n}=&a_n+(1-a_n\gamma_0),\\
    \eta_{j,n}\sim& \N\left(0,(1-a_n\gamma_0)^2j^{-2p}\gamma_j^{-1}\right),\qquad \textup{for } 1\leq j\leq i-1,\\
    \eta_{j,n}=&0,\qquad j\geq i,
\end{align*} 
for $i\leq k_n$, so that $\pi_{<i}(\bbeta_{<i,n})=\prod_{j=1}^{i-1}\pi(\eta_{j,n})$ is the truncated Gaussian sieve of $\bbeta_{<i,n}$.

\begin{enumerate}
\item[(1)] \textit{For \eqref{eqn-4.1}:} 
Written in $\bbeta_n$, \eqref{eqn-4.1} is equivalent to
\begin{equation*}
  \pi\left(
  \left\{\bbeta_n\in\Omega_n^{\G_{a_n}}\left|\, \sum_{j=0}^{\infty}(\eta_{j,n}-\theta_{j,n})^2\gamma_j \leq \frac{C_6a_n\varepsilon_n^2}{16}\right.\right\}
  \right)
  \gtrsim \exp\left(-\frac{1}{8}C_6 a_n b_n\varepsilon_n^2\right).
\end{equation*}
As before in the proof of Theorem \ref{th-bound}, let us define the neighborhood 
\begin{equation}\label{eqn-4.15}
    \bar{S}_{a_n}(\varepsilon_n^2):=\left\{\bbeta_n\in\Omega_n^{\G_{a_n}}\left|\, \sum_{j=0}^\infty(\eta_{j,n}-\theta_{j,n})^2\gamma_j \leq a_n\varepsilon_n^2\right.\right\},
\end{equation}
analogously as in \eqref{Soverline}. By the definition of the sieve prior \eqref{pi}, one has
\begin{equation}\label{eqn-4.11}
  \pi\left(\bar{S}_{a_n}(\varepsilon_n^2)\right)
  =\sum_{j=0}^{k_n}p_j\pi_{<j}\!\left(\bar{S}_{a_n}(\varepsilon_n^2)\right)
  \geq p_{k_n}\pi_{<k_n}\left(\bar{S}_{a_n}(\varepsilon_n^2)\right).
\end{equation}
It is clear that $\bar S_{a_n}(t)\subset S_{a_n}(t)$, for any $t>0$, as argued in step (2) of the proof of Theorem \ref{th-bound}. Thus, we only need to show 
\begin{equation}\label{eqn-4.16}
    \pi_{<k_n}(\bar S_{a_n}(\varepsilon_n^2))\gtrsim\exp\left(-\frac18C_6a_nb_n\varepsilon_n^2\right).
\end{equation}

To show \eqref{eqn-4.16}, we split the series in $\bar S_{a_n}(\varepsilon_n^2)$ into two parts: the one sums from $k_n$ to $\infty$, and the other one sums from $1$ to $k_n-1$. 
Notice that
\begin{align}\label{eqn-4.13}\notag
    \sum_{j=k_n}^{\infty}(\theta_{j,n}-\eta_{j,n})^2\gamma_j
    \leq& \sum_{j=k_n}^{\infty}(\theta_{j,n}-\eta_{j,n})^2\frac{\tilde\gamma_j}{j^{7p}}
  \leq \frac{1}{k_n^{7p}}\sum_{j=k_n}^{\infty}(\theta_{j,n}-\eta_{j,n})^2\tilde\gamma_j\\
  \leq &\frac{1}{k_n^{7p}}n^{\frac{1}{2p+1}}
  \leq \tfrac{1}{2}a_n\varepsilon_n^2,
\end{align}
since $\bbeta_n,\btheta_n\in\Omega_n$ \eqref{eqn-O_M}, provided that $k_n=\O\left(n^{\frac{6p+1}{7p(2p+1)}}\right)$, $a_n=\varepsilon_n^4$ and $\tilde\gamma_j\geq j^{7p}\gamma_j$ for $j\geq1$. Thus,
\begin{equation}\label{eqn-4.14}
  \pi_{<k_n}\left(\bar{S}_{a_n}(\varepsilon_n^2)\right)
  \gtrsim \pi_{<k_n}(B_{k_n}),
\end{equation}
where $\displaystyle B_{k_n}
  :=\left\{\bbeta_n\in\G^{a_n}_n\left|\, \sum_{j=0}^{k_n-1}(\theta_{j,n}-\eta_{j,n})^2\gamma_j \leq \tfrac{1}{2}a_n\varepsilon_n^2\right.\right\}$.

The remainder of (i) is to derive the lower bound for $\pi_{<k_n}(B_{k_n})$. The derivation is straightforward but lengthy. To avoid distraction, we only sketch the main steps and relegate the details to Section C of the supplementary material \cite{LL:26}.
\begin{align}\label{eqn-4.17}\notag
        \pi_{<k_n}(B_{k_n})
=& \int_{B_{k_n}}
\prod_{i=0}^{k_n-1}
\frac{i^p\,\gamma_i^{1/2}}{\sqrt{2\pi}\,(1-a_n\gamma_0)}
\exp\left\{
-\frac{\eta_{i,n}^2}
{2 i^{-2p}
\gamma_i^{-1} (1-a_n\gamma_0)^2}
\right\}
d\bbeta_{<k_n,n}\\\notag
\gtrsim&
\exp{\left\{ -\frac{n^{{1}/{(2p+1)}}}{(1-a_n\gamma_0)^2}
\right\}}
\left(2\pi\right)^{\frac{k_n}{2}}
\left[ (k_n-1)!\right]^p\\\notag
    &\cdot\int_{\left\{  \mathbf u_n \Big| \sum \limits _{j=0}^{k_n-1} u_{j,n}^2 \le  \frac{1}{2}{a_n\varepsilon_n^2}\right\}}
\exp\left\{
-\frac{u_{i,n}^2}
{ i^{-2p}
(1-a_n\gamma_0)^2}
\right\}d\bu_{<k_n,n},\\\notag
    &\phantom{aaaaaaaaaaaaaaaaaaaaaaaaaaaaa}\textup{by\ letting\ }u_{i,n}=\sqrt{\gamma_i}(\eta_{i,n}-\theta_{i,n})\\\notag
    \gtrsim&\exp{\left\{ -\frac{n^{{1}/{(2p+1)}}}{(1-a_n\gamma_0)^2}
\right\}}
\left(2\pi\right)^{\frac{k_n}{2}}
\left[ (k_n-1)!\right]^p\frac{\pi^{k/2}}{\Gamma(k_n/2)}\, 
{\left(\tfrac12 a_n\varepsilon_n^2\right)}^{\frac{k_n}{2}}\\\notag
    &\cdot\int_0^1u^{k_n/2-1}
\exp\left\{
-\frac{\tfrac12 a_n\varepsilon_n^2 k_n^{2p} u}{(1-a_n\gamma_0)^2}
\right\}du\\\notag
    \geq& \exp{\left\{ -\frac{n^{{1}/{(2p+1)}}}{(1-a_n\gamma_0)^2}
\right\}}
\left(2\pi\right)^{\frac{k_n}{2}}
\left[ (k_n-1)!\right]^p\frac{\pi^{k_n/2}}{\Gamma(k_n/2)}
{\left(\tfrac12 a_n\varepsilon_n^2\right)}^{\frac{k_n}{2}}\\\notag
    &\cdot\frac{2}{k_n}\exp\left\{
-\frac{a_n\varepsilon_n^2 k_n^{2p} }{2(1-a_n\gamma_0)^2}
\right\}\\
    \gtrsim&\exp{\left\{-2n^{\frac{1}{2p+1}}\right\}}
    \gtrsim\exp\left(-\frac18C_6 a_n b_n \varepsilon_n^2\right),
\end{align}
if $k_n=\O\left(n^{\frac{6p+1}{7p(2p+1)}}\right)$, $a_n=\varepsilon_n^4$ and $b_n =n^{1+\frac{4p}{2p+1}}$. Therefore, \eqref{eqn-4.16} follows immediately from \eqref{eqn-4.14} and \eqref{eqn-4.17}.

\item[(2)] \textit{For \eqref{eqn-4.2}:} Notice that 
\begin{equation}\label{eqn-4.18}
     \left\{\bbeta\in\Omega^\G\left|\,\sum_{j=0}^\infty\eta_j^2\tilde\gamma_j>\frac{n^{1/(2p+1)}}{(1-a_n\gamma_0^2)}\right.\right\}
    \subset\left\{\bbeta\in\Omega^\G\left|\,\sum_{j=0}^\infty\eta_j^2\tilde\gamma_j>n^{1/(2p+1)}\right.\right\}\overset{\eqref{eqn-O_M}}=:\Omega_n^c,
\end{equation}
and 
\begin{align}\label{pinc}\notag
    \pi\left (  \Omega_n^c\right)
    =&1-\pi(\Omega_n) 
   \leq1-\pi_{<2}(\Omega_n) \\\notag
     \overset{\eqref{gau-beta}}=&1-\left( 2\Phi(v)-1\right),\quad\textup{by\ letting}\ v:=\sqrt{\frac{n^{1/(2p+1)}-\gamma_0}{\gamma_1^{-1}\tilde \gamma_1}}\\
    \leq&\frac{2 \exp{\{-v^2/2\}}}{v}
\lesssim\frac{\exp{\{-C_6    n^{1/(2p+1)} \}}}{n}
\lesssim \exp{ \{-C_6 n \varepsilon_n^2\}},
\end{align}
where $\Phi(\cdot)$ is the distribution function of standard normal distribution. The detailed computation of \eqref{pinc} is in Section C of the supplementary material \cite{LL:26}. Therefore, \eqref{eqn-4.2} follows immediately from \eqref{eqn-4.18} and \eqref{pinc}.
\end{enumerate}
\end{proof}

\section{Numerical experiments}\label{sec-5}

In this section, we present a numerical example to illustrate the performance of the Bayesian density estimator. More numerical experiments on unbounded domains have been included in Section D of the supplementary material \cite{LL:26} due to the page limit.

We consider the log-density studied in \cite{AOS2007} (slightly modified on $[-1,1]$ to better fit the polynomial basis):
\[
     g_0(x)=c_0\exp\left(\sin\left( \frac{x+1}{2}\right)\pi\right)-1,
\]
for $x\in[-1,1]$, where $c_0$ is the normalization constant. The Legendre polynomial $\{L_i(x)\}_{i=0}^\infty$ with the weight $w(x)\equiv1$ is discussed in Section B of the supplementary material \cite{LL:26}. To better cope with the vanishing boundary at $\pm1$, we use the  generalized Legendre polynomials introduced in \cite{Shenjie}:
$\displaystyle \tilde L_j(x)=\frac{L_j(x)-L_{j-2}(x)}{\sqrt{4j+6}}$, which is not orthogonal polynomials. Nevertheless, the expansions of $g(x)$ in terms of $L_i(x)$ or $\tilde L_i(x)$ are equivalent, i.e.
\begin{align}\label{eqn-5.4}
  g(x)
  \approx&\sum_{j=0}^{N}\tilde\eta_j\tilde L_j(x)
  =\frac{\tilde\eta_j}{\sqrt{4j+6}}\left(L_j(x)-L_{j-2}(x)\right)\\ \notag
  =&\sum_{j=0}^{N-2}\left(\frac{\tilde \eta_j}{\sqrt{4j+6}}-\frac{\tilde\eta_{j+2}}{\sqrt{4j+14}}\right)L_j(x)
   +\sum_{j=N-1}^{N}\frac{\tilde\eta_j}{\sqrt{4j+6}}L_j(x)
  :=\sum_{j=0}^{N}\eta_j L_j(x).
\end{align}

Let $\bY^n=(Y_1,\ldots,Y_n)$ be i.i.d. samples drawn from $g_0(x)$. The Bayesian estimator is the posterior mean
\begin{align}\label{bayes}
  \hat g(x|\bY^n)
  &=\int p(x|\bbeta)d \pi(\bbeta|\bY^n)  =\E^{\pi(\bbeta|\bY^n)}\left[g(x|\bbeta)\right]
    \propto \int_{\Omega} g(x|\bbeta)\pi(\bbeta)l_n(\bY^n|\bbeta)d\bbeta,
\end{align}
where $\displaystyle l_n(\bY^n|\bbeta):=\log p(\bY^n|\bbeta)=\prod_{i=1}^{n} g(Y_i|\bbeta)
  =\prod_{i=1}^{n}\left(\sum_{j=0}^{N}\eta_j L_j(Y_i)\right)$
is the likelihood. From Theorem \ref{th-prior-general}, one can easily obtain the Hellinger distance between the Bayesian estimator and the true density:
\begin{corollary}\label{cor1}
Assume that conditions in Theorem \ref{th-prior-general} hold. If $\varepsilon_n=n^{-1/(2p+1)}$, then for sufficiently large $K>0$, 
\begin{align}\label{eqn-cor1}
    d_{H}^2\left(  g_{0},\hat{g}     \right)
    \lesssim 
    \varepsilon_n^2 +2\exp\!\left(-\frac{nK^2\varepsilon_n^2}{2}\right)
  +2\exp\!\left(-\frac{C_6n \varepsilon_n^2}{4}\right),
\end{align}
where $\hat{g}$ is the Bayesian estimator in \eqref{bayes}, $g_0$ is the true density and $C_6$ is the constant in Theorem \ref{th-prior-general}.
\end{corollary}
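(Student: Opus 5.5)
The plan is to pass from the posterior-probability bound \eqref{eqn-4.3} of Theorem \ref{th-prior-general} to a bound on the posterior mean. The tool is convexity of the squared Hellinger distance in its second argument. Since $\hat g=\int g(\cdot|\bbeta)\,d\pi(\bbeta|\bY^n)$ and $g\mapsto d_H^2(g_0,g)=2-2\int_I\sqrt{g_0 g}\,w\,dx$ is convex (the map $g\mapsto\sqrt{g}$ is concave and $\sqrt{g_0}\geq0$), Jensen's inequality gives
\begin{equation*}
d_H^2(g_0,\hat g)\leq \int d_H^2\bigl(g_0,g(\cdot|\bbeta)\bigr)\,d\pi(\bbeta|\bY^n).
\end{equation*}
To justify the Jensen step, I will write $\int_I\sqrt{g_0\hat g}\,w\,dx\geq \int\!\int_I\sqrt{g_0 g}\,w\,dx\,d\pi(\bbeta|\bY^n)$. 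This follows from pointwise concavity of the square root together with Fubini's theorem; everything is nonnegative and integrable because each $g$ is a probability density with respect to $w\,dx$.

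Next I will split the posterior integral over $A_n$ and $A_n^c$ as in \eqref{An-c}. On $A_n$ we have $d_H^2(g_0,g)\leq K^2\varepsilon_n^2$, and the posterior mass of $A_n$ is at most one, so this piece contributes at most $K^2\varepsilon_n^2\lesssim\varepsilon_n^2$. On $A_n^c$ I will use the universal bound $d_H^2(g_0,g)\leq 2$, valid for any two densities. This gives a contribution of at most $2\pi(A_n^c|\bY^n)$. Inserting \eqref{eqn-4.3} bounds this by $2\exp(-nK^2\varepsilon_n^2/2)+2\exp(-C_6n\varepsilon_n^2/4)$, up to the implicit constant, on the event of probability tending to one given in Theorem \ref{th-prior-general}. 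Adding the two pieces yields \eqref{eqn-cor1}.

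The main point needing care is bookkeeping rather than substance. The stated rate $\varepsilon_n=n^{-1/(2p+1)}$ must satisfy the hypothesis $\varepsilon_n\geq n^{-p/(2p+1)}$ of Theorem \ref{th-prior-general}. It does for every $p\geq1$, and $n\varepsilon_n^2=n^{(2p-1)/(2p+1)}\to\infty$, so the theorem applies and its exceptional set carries over. I will also note that the constants hidden in $\lesssim$ come from \eqref{eqn-4.3}, so the factor $2$ in front of the exponentials should be read up to that constant.

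Finally, $\hat g$ must be a genuine density for $d_H$ to make sense. It is a mixture of densities in $\G$, so it is nonnegative and integrates to one. I do not expect any step to be a real obstacle; the only delicate point is using the integrated (Jensen) form rather than attempting a pointwise comparison.
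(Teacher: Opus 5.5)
Your proposal is correct and follows the paper's route. The paper invokes the argument of Theorem 5 in \cite{shen2001} to get $d_H^2(g_0,\hat g)\le \varepsilon_n^2\pi(A_n|\bY^n)+2\pi(A_n^c|\bY^n)$, which is exactly your Jensen-plus-splitting step with $d_H^2\le 2$ on $A_n^c$, and then inserts \eqref{eqn-4.3}; you additionally spell out the Fubini justification, the carried-over exceptional set, and the check that $\varepsilon_n=n^{-1/(2p+1)}$ satisfies $\varepsilon_n\ge n^{-p/(2p+1)}$ and $n\varepsilon_n^2\to\infty$, all of which the paper leaves implicit.
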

\begin{proof}
    Following the same argument in the proof of Theorem 5, \cite{shen2001}, one has 
    \begin{align}\label{eqn-5.12}
        d_{H}^2(g_0,\hat g) \le
        \varepsilon_n^2 \pi(A_n|\bY^n)
        +2 \pi(A_n^c|\bY^n)
        \lesssim \varepsilon_n^2
        +2 \pi(A_n^c|\bY^n).
    \end{align}
   The proof is completed by combining \eqref{eqn-5.12} and \eqref{eqn-4.3} in Theorem \ref{th-prior-general}.
\end{proof}

\subsection{Experiment 1: The Bayesian estimator} 

In the sequel, we shall numerically verify that the Bayesian estimator $\hat g$ is a ``good'' approximation to the true density $g_0$ under our setting, i.e. using the Legendre polynomials, or equivalently the generalized Legendre polynomials. Let us set $p=2$, the sample size $n=10000$, then $k_n=9\approx n^{\frac{6p+1}{7p(2p+1)}}-1$, and the Gaussian sieve prior is assigned to $\bbeta=(\eta_0,\eta_1,\cdots)$:
\begin{align}\label{eqn-5.3}
  \eta_j &\sim \N(0,\sigma_j^2), \qquad j=0,1,\cdots,9, \notag\\
  \eta_j &=0, \qquad j\geq 10,
\end{align}
with 
$\{\sigma_j\}_{j=0}^{9}
  =
  [4.03; 5.12; 2.41; 1.68; 1.17; 0.96; 0.64; 0.55; 0.28; 0.25]$.
Or equivalently, the Gaussian sieve prior on $\tilde\bbeta$ is
\begin{align*}
    \tilde\eta_j\sim&\N\left(0,\frac{\sigma_j^2}{4j+6}+\frac{\sigma_{j+2}^2}{4j+14}\right),\quad j=0,1,\cdots,7,\\
    \tilde\eta_j\sim&\N(0,\sigma_j^2),\quad j=8,9,\quad \textup{and}\ 
    \tilde\eta_j=0,\quad j\geq10.
\end{align*}
The posterior mean in \eqref{bayes} is obtained by drawing Markov chain Monte Carlo (MCMC) samples from the posterior distribution induced by the prior \eqref{eqn-5.3}. A random walk Metropolis algorithm is employed, generating $10000$ iterations where the first $2000$ are treated as burn-in.

\begin{figure}[t]
    \centering
    \begin{subfigure}{0.6\linewidth}
              \hspace{-5pt}\includegraphics[scale=0.3]{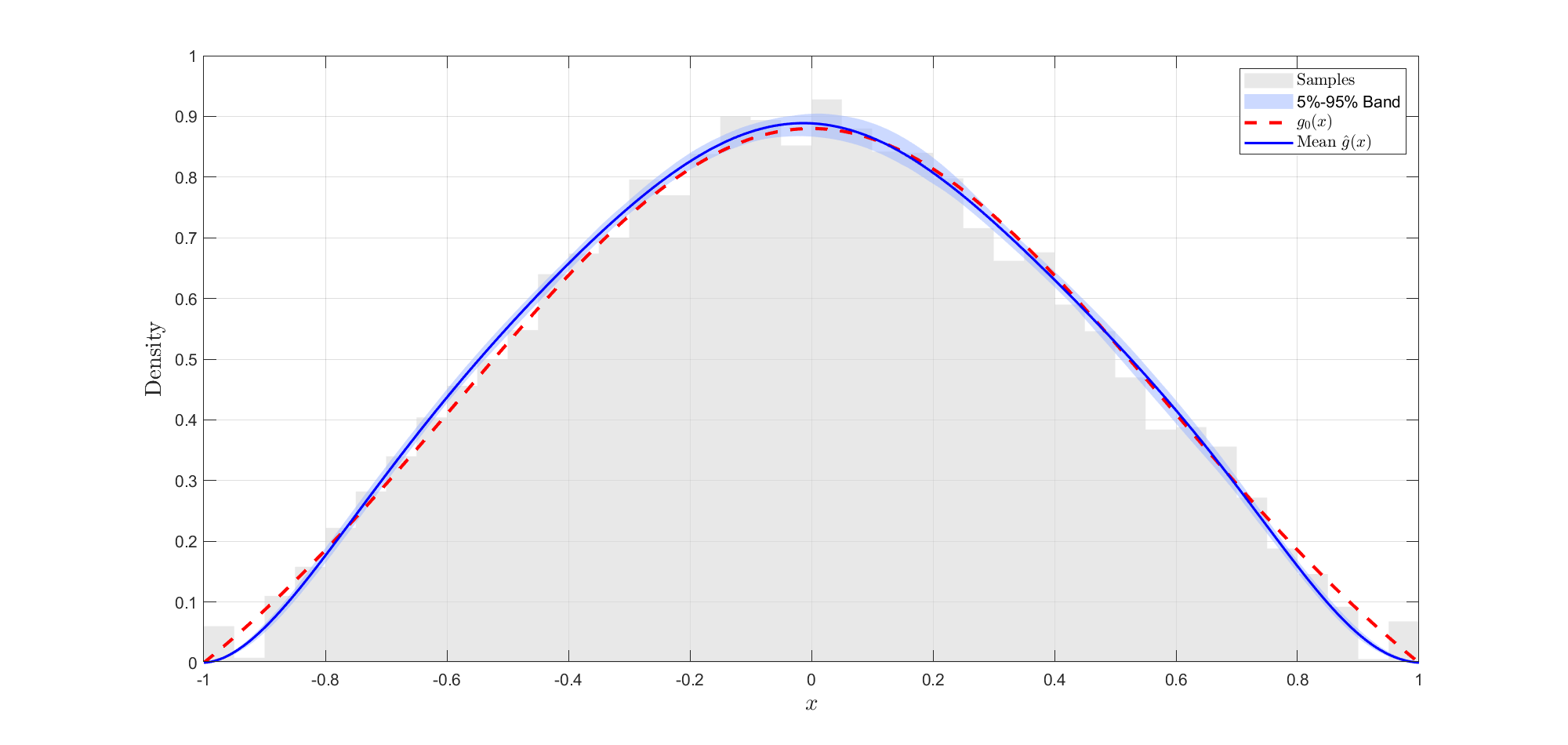}
        \caption{Using the generalized Legendre basis}
        \label{subfig-1.1}
    \end{subfigure}
    \\
    \begin{subfigure}{0.6\linewidth}
          \hspace{-5pt} \includegraphics[scale=0.3]{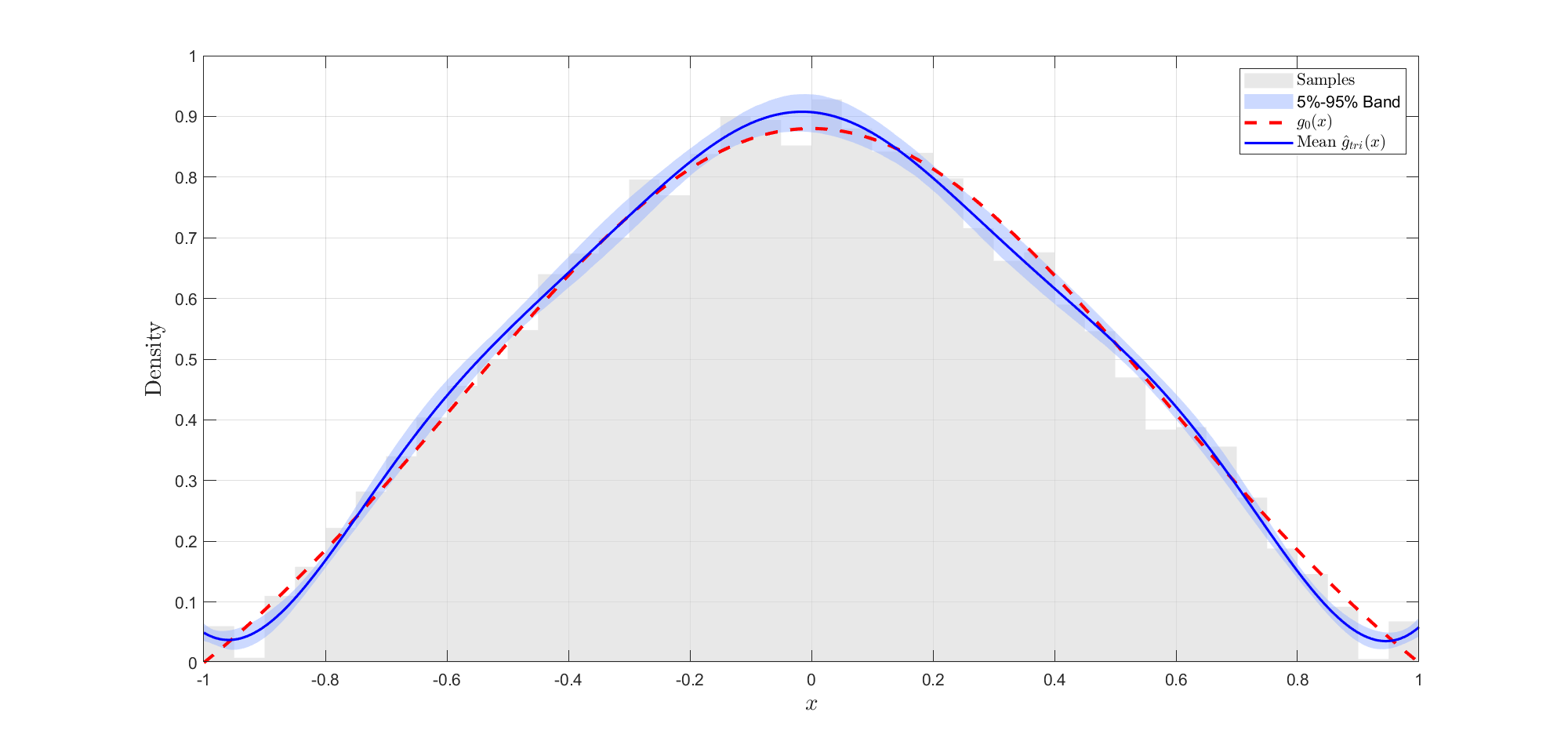}
        \caption{Using the trigonometric basis}
        \label{subfig-1.2}
    \end{subfigure}
    
    \caption{True density $g_0(x)$ (red dashed) and the posterior mean density of $\hat g(\cdot|\bY^n)$ (blue) using the generalized Legendre polynomials and the trigonometric basis, and their $95\%$ posterior confidence bands.}
    \label{fig-1}
\end{figure}

As a comparison, the trigonometric basis in \cite{AOS2007} are also investigated. This estimator $\hat g_{tri}(x)$ has the form
\[
   \hat g_{tri}(x)\approx\eta_{tri,0}+\sqrt{2}\sum_{j=1}^{k}\left\{\eta_{tri,2j-1}\sin(2\pi jx)+\eta_{tri,2j}\cos(2\pi jx)\right\},
\]
where the coefficients are mutually independent with the prior
\begin{equation}\label{eqn-5.13}
    \eta_{tri,0}=0,\quad \eta_{tri,j}\sim \N\left(0,\gamma_j^{-(2p+1)}\right),
\end{equation}
for $j=1,\cdots,k=5$, with
$\gamma_0=0$ and $\gamma_j=
\begin{cases}
j+1, & \text{for } j \text{ odd},\\
j, & \text{for } j \text{ even}.
\end{cases}$
With the same set of samples $\bY^n$ from $g_0$, the Bayesian estimator is obtained by drawing MCMC samples from the posterior distribution induced by the prior \eqref{eqn-5.13}.

The Bayesian estimators $\hat g$ and $\hat g_{tri}$ and the true density $g_0$, as well as their $95\%$ confidence bands have been displayed in Figure \ref{fig-1}. Both estimators, $\hat{g}$ and $\hat{g}_{tri}$, successfully capture the global features of the true density. 
However, the generalized Legendre basis (Figure \ref{subfig-1.1}) demonstrates noticeably superior overall performance compared to the trigonometric basis (Figure \ref{subfig-1.2}). 
The posterior mean $\hat{g}$ aligns almost perfectly with the true density $g_0(x)$ across the entire domain, precisely capturing the mode in the center, i.e. near $x=0$, with small deviation. 
Moreover, its corresponding $95\%$ confidence band is remarkably smooth and uniformly narrow over the whole interval. 
Most notably, the Bayesian estimator using the generalized Legendre polynomials effectively mitigates boundary bias near $x=\pm 1$, whereas that using the trigonometric basis suffers from pronounced boundary artifacts, leading to significantly inflated uncertainty at the boundaries.
\begin{figure}[t]
  \centering
  \includegraphics[width=0.55\linewidth]{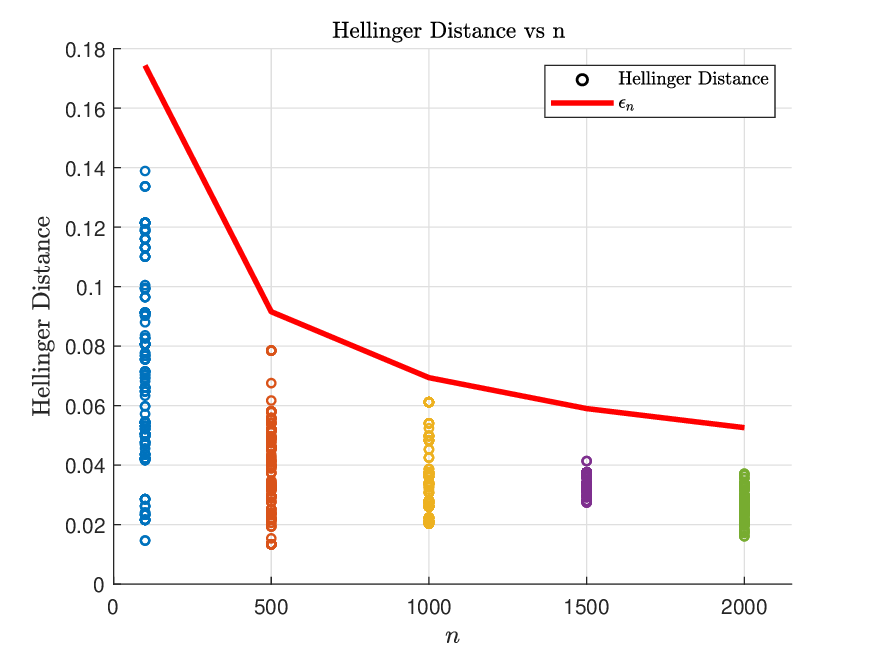}
  \caption{The $m=100$ Hellinger distances
with different sample sizes.}
  \label{fig_h}
\end{figure}

\subsection{Experiment 2: the Hellinger distance v.s. the sample size}

In this experiment, we shall numerically display the decreasing trend of the Hellinger distance as the sample size $n$ increases. Let us set $p=2$ as before and choose $n = [100,500, 1000, 1500, 2000]$, with corresponding $k_n\approx2\left(n^{\frac{6p+1}{7p(2p+1)}}\right)-1\approx[4,6,6,8,8]$. 
For each $n$, we plot $m=100$ Hellinger distances
$d_{H}(\hat{g},g_0)$ in Figure \ref{fig_h}. Each circle represents one computed Hellinger distance $d_{H}(\hat{g},g_0)$, while the red curve is of the order $\varepsilon_n=n^{-1/(2p+1)}=n^{-1/5}$.
It is observed that the Hellinger distances are generally below the red curve. Moreover, they decrease as $n$ increases, which is consistent with our theoretical result in Corollary \ref{cor1}. Further observation reveals that the Hellinger distance concentrates further as $n$ grows larger. This provides an additional numerical evidence that the estimator's performance improves with larger sample sizes $n$.

\section{Discussion}\label{sec-6}

In this paper, we have developed a unified Bayesian framework for density estimation based on weighted orthogonal polynomial expansions. By refining the analysis within the framework of \cite{shen2001}, we establish the minimax optimal posterior convergence rate regardless of whether the true density admits a strictly positive lower bound. Furthermore, we construct a Gaussian sieve prior that achieves this optimal rate.

However, our proposed sieve prior is non-adaptive, since its variance explicitly depends on the smoothness level of the true density. It is therefore highly desirable to develop adaptive priors that automatically adjust to unknown regularity \citep{Huang2004,AOS2007}. Accordingly, the construction of adaptive priors within our orthogonal polynomial framework constitutes an important direction for future research.
Another promising direction concerns density estimation for heavy-tailed distributions. To some extent, our results cover moderately heavy-tailed densities such as $g_0(x)\sim e^{-|x|^s}$ as $|x|\to\infty$, for any $0<s<2$, since orthogonal polynomial systems with respect to the weight $w(x)=e^{-|x|^s}$ can be constructed via the Gram-Schmidt procedure. Nevertheless, our theory cannot be extended to densities with polynomial tails, i.e., $g_0(x)\sim |x|^{-\alpha}$ as $|x|\to\infty$ for some $\alpha>1$, due to the absence of a valid orthogonal polynomial expansion. In particular, moments of order $k\geq\alpha$ fail to exist:
\begin{align}\label{eqn-6.2}\int_{\R^+} x^kw(x)dx=\infty.
\end{align}
This theoretical limitation is not unique in Bayesian nonparametrics. In Bayesian kernel mixture models, approximating heavy-tailed distributions using infinite mixtures of light-tailed kernels leads to posterior inconsistency in tail-index estimation \citep{Li2019}. A natural remedy is to use heavy-tailed kernels with polynomial decay, such as the inverse-gamma mixtures in \citep{EJOS2017}. However, such a modification is incompatible with our orthogonal polynomial framework, as noted earlier.

An alternative approach is to map the unbounded domain to a bounded interval. Along this line, a semiparametric method based on extreme value theory has been proposed \citep{Surya}, in which the cumulative distribution function of the generalized Pareto distribution maps heavy-tailed observations into the bounded interval $(0,1)$. One may incorporate such a mapping strategy into our orthogonal polynomial framework. Specifically, a well-designed heavy-tailed bijective transformation should be constructed to ensure that the Hellinger distances before and after transformation remain comparable. In this manner, the tail behavior of the true density is effectively decoupled from the central expansion, enabling the use of standard orthogonal polynomials for nonparametric density estimation.


\begin{funding}

The authors were supported by the National Natural Science Foundation of China (12271019) and the National Key R\&D Program of China (2022YFA1005103).
\end{funding}

\begin{supplement}
\stitle{Supplementary on ''The minimax optimal convergence rate of posterior density in the weighted orthogonal polynomials''}
\slink[url]{https://doi.org/10.1214/[provided by typesetter].pdf}
\sdescription{This supplementary PDF file contains four sections. 
Section A provides the detailed proof of Lemma \ref{lem-1}. 
Section B illustrates the specific specification of the sequence $\tilde{\gamma}_j$ using Legendre and Hermite polynomials. 
Section C contains the technical details in the proof of the sieve prior in Theorem \ref{th-sieve-gaussian}, including direct computations for the truncated prior mass and the tail probability bound.
Section D presents additional numerical experiments on unbounded intervals, evaluating the performance of the Bayesian estimator for an exponential density on $\R^+$ using Laguerre polynomials and a Gaussian density on $\R$ using Hermite polynomials.}
\end{supplement}

\bibliographystyle{ba}
\bibliography{reference}

\end{document}


\begin{frontmatter}
\title{Supplementary on ``The minimax optimal convergence rate of posterior density in the weighted orthogonal polynomials"}

\begin{aug}
\author[Y]{\fnms{Yiqi}~\snm{Luo}\ead[label=e1]{luoyiqi1999@163.com}}
\and
\author[Y,X,Z]{\fnms{Xue}~\snm{Luo}\ead[label=e2]{xluo@buaa.edu.cn}\orcid{0000-0003-1187-7599}}
\address[Z] {Corresponding author}
\address[Y]{School of Mathematical Sciences, Beihang University (Shahe Campus), Changping district, Beijing, 102206, P. R. China\printead[presep={,\ }]{e1}}

\address[X]{Key Laboratory of Mathematics, Informatics and Behavioral Semantics (LMIB), Haidian district, Beijing, 100191, P. R. China\printead[presep={,\ }]{e2}}

\runauthor{Y. Luo et al.}
\end{aug}




\end{frontmatter}

\section{The proof of Lemma 2.1}
\begin{lemma}[Lemma 2.1]
For any $\displaystyle g(x|\bbeta)=\sum_{j=0}^\infty\eta_jq_j(x)\in\G$ in (5), if $\bbeta\in\Omega$ in (6).
\end{lemma}

\begin{proof}
It is clear to see that the first two conditions (i.e. $\displaystyle\sum_{j=0}^
\infty\eta_jq_j(x)\geq0$ and \linebreak[4] $\displaystyle\sum_{j=0}^
\infty\eta_j\int_Iq_j(x)w(x)dx=1$) in the definition (6) of $\Omega$ correspond to $g\geq0$ and\linebreak[4] $\displaystyle\int_Ig(x)w(x)dx=1$, respectively. Thus, it is sufficient to show the last condition \linebreak[4]
$\displaystyle\sum_{j=0}^{\infty}\eta_j^2\tilde\gamma_j<\infty$ guarantees that $g\in W_{2,w}^p$.

For $l=0$, we have
\begin{equation}\label{g<}
\Vert g^{(0)}(x) \Vert_{L_{2,w}}^2 
    =  \int_I g^2(x) w(x)dx 
   =\int_I \left(\sum_{j=1}^{\infty} \eta_j q_j(x) \right)^2 w(x) dx
    \overset{(5)}=\sum_{j=0}^{\infty} \eta_j^2\gamma_j < \infty,
\end{equation}
by (6). 

For $1\leq l\leq p$, we have
\begin{align}\label{eqn-A.1}\notag
    \Vert g^{(l)}(x) \Vert_{L_{2,w}}^2&
\overset{(8)} =\int_I\left(\sum_{j=0}^{\infty} \eta_j \left( \sum_{i=0}^{j-1} a_{ij}^{(l)} q_i(x) \right)\right)^2w(x)dx\\\notag
    =&\int_I\left( \sum_{i=0}^{\infty} 
\left( \sum_{j=i+1}^{\infty} \eta_j a_{ij}^{(l)} \right) q_i(x)\right)^2w(x)dx
    =\sum_{i=0}^{\infty} \left(\sum_{j=i+1}^{\infty} \eta_j a_{ij}^{(l)} \right)^2 \gamma_i\\\notag
    =&\sum_{i=0}^{\infty} \left(\sum_{j=i+1}^{\infty} \eta_j a_{ij}^{(l)}\sqrt{ \gamma_i} \right)^2\\\notag
    \leq&\sum_{i=0}^{\infty} \left[\left( \sum_{j=i+1}^{\infty} \eta_j^2 (a_{ij}^{(l)})^4\gamma_i \right)^{1/2}
\left( \sum_{j=i+1}^{\infty} \eta_j^{2/3}\gamma_i^{1/3} \right)^{3/2}\right]\\
    \leq&
\left( \sum_{i=0}^{\infty} \sum_{j=i+1}^{\infty} 
\eta_j^2 (a_{ij}^{(l)})^4\gamma_i \right)^{1/2}
\left[ \sum_{i=0}^{\infty}
\left( \sum_{j=i+1}^{\infty} \eta_j^{2/3} \gamma_i^{1/3} \right)^3 
\right]^{1/2},
\end{align}
by applying H\"older's inequality twice. By exchanging the order of the double summation, the first term on the right-hand side of \eqref{eqn-A.1} equals to
\begin{equation}\label{eqn-A.2}
    \sum_{i=0}^{\infty} \sum_{j=i+1}^{\infty} 
\eta_j^2 (a_{ij}^{(l)})^4\gamma_i
    =\sum_{j=0}^{\infty}\eta_j^2 \sum_{i=0}^{j-1} (a_{ij}^{(l)})^4\gamma_i,
\end{equation}
while the second term is 
\begin{equation}\label{eqn-A.3}
    \sum_{i=0}^{\infty}
\left( \sum_{j=i+1}^{\infty} \eta_j^{2/3} \gamma_i^{1/3} \right)^3 
    \lesssim \sum_{j=1}^\infty j^4\eta_j^2\max_{0\leq i\leq j-1}\gamma_i
\end{equation}
by the weighted Hardy's inequality, with $a_j=\eta_j^{2/3}$ and $b_i=\gamma_i^{1/3}$ in Lemma \ref{lem-A.2} below. Combining \eqref{eqn-A.1}-\eqref{eqn-A.3}, one has
\begin{equation}\label{dg<}
    \Vert g^{(l)}(x) \Vert_{L_{2,w}}^2\lesssim\left(
    \sum_{j=0}^{\infty}\eta_j^2 \sum_{i=0}^{j-1} (a_{ij}^{(l)})^4\gamma_i\right)^{1/2}\left(
\sum_{j=1}^\infty j^4\eta_j^2\max_{0\leq i\leq j-1}\gamma_i\right)^{1/2}
    \leq\sum_{j=0}^\infty\eta_j^2\tilde\gamma_j,
\end{equation}
where $\displaystyle\tilde\gamma_j\geq\max\left\{\max_{0\leq l\leq \min(j,p)}\sum_{i=0}^{j-1} (a_{ij}^{(l)})^4\gamma_i,j^4\max_{0\leq i\leq j-1}\gamma_i\right\}$, for $j\geq0$.
\end{proof}

For the readers' convenience, we give the weighted Hardy's inequality and its proof below.
\begin{lemma}[Weighted Hardy's inequality]\label{lem-A.2}
    For two non-negative sequence $\{a_j\}_{j\geq1}$ and $\{b_i\}_{i\geq0}$, let $\displaystyle A_i:=\sum_{j=i+1}^\infty a_jb_i$, $i\geq0$, then 
    \[
        \sum_{i=0}^\infty A_i^3\lesssim\sum_{j=1}^\infty j^4a_j^3\max_{0\le i\le j-1}b_i^3.
    \]
\end{lemma}
\begin{proof}
By direct computation, one has
\[
\sum_{i=0}^{\infty} A_i^3 = \sum_{i=0}^{\infty}  \left( \sum_{j=i+1}^{\infty} a_j b_i \right)^3 
= \sum_{i=0}^{\infty} b_i^3 \left( \sum_{j=i+1}^{\infty} a_j \right)^3 ,
\]
where the inner summation can be estimated using the Cauchy-Schwarz inequality
\[
\sum_{j=i+1}^{\infty} a_j \leq \left( \sum_{j=i+1}^{\infty} j^3 a_j^3 \right)^{1/3} \left( \sum_{j=i+1}^{\infty} j^{-\frac{3}{2}} \right)^{2/3} \lesssim \left( \sum_{j=i+1}^{\infty} j^3 a_j^3 \right)^{1/3}.
\]
Therefore, 
\[
\sum_{i=0}^\infty 
A_i^3 \lesssim \sum_{i=0}^{\infty} b_i^3  \sum_{j=i+1}^{\infty} j^3 a_j^3 
    =\sum \limits_{j=1}^\infty j^3a_j^3 \sum \limits _{i=0}^{j-1}b_i^3 \le \sum_{j=1}^\infty j^4a_j^3\max_{0\le i\le j-1}b_i^3.
\]

\end{proof}

\section{Specification of $\Omega$ in two examples}

\begin{example}[Legendre polynomials] 
Let $q_j(x)=L_j(x)$ be the Legendre polynomial of degree $j$, $j\geq0$, orthogonal on $[-1,1]$ with respect to $w(x)\equiv 1$. The orthogonality relation is
\begin{equation*}
    \int_{-1}^1 L_i(x)L_j(x)dx=
\delta_{ij}\gamma_j,
\end{equation*}
where 
\begin{equation}\label{eqn-B.1}
    \gamma_j=\dfrac{2}{2j+1}\leq2,
\end{equation}
for all $j\geq0$. In this case, we have
\begin{equation*}
    \tilde\gamma_j\gtrsim j^{4p+1}
\end{equation*}
for $j\geq p$ in $\Omega$ (6).
\end{example}

We claim that $|a_{ij}^{(l)}|\lesssim j^l$, for all $0\leq i\leq j-1$, $j\geq1$ and $1\leq l\leq p$, by induction using the recurrence relation of Legendre polynomials \citep{lebedevNN}.

For $l=1$. Starting from $j=1$, $a_{01}^{(1)}\lesssim 1$ holds trivially. By induction, assume that $|a_{ij}^{(1)}|\lesssim j$ holds for all $0\leq i\leq j-1$, with $j=k-1$. Then
\begin{align}\label{eqn-A.4}\notag
\sum_{i=0}^{k}a_{i(k+1)}^{(1)}L_i(x)=&L_{k+1}'(x)
=L_{k-1}'(x)+(2k+1)L_k(x)\\
=&\sum_{i=0}^{k-2}a_{i(k-1)}^{(1)}L_i(x)+(2k+1)L_k(x),
\end{align}
where the second equality is the recurrence relation of Legendre polynomials and the third equality follows from the hypothesis assumption. Comparing both sides of \eqref{eqn-A.4}, $|a_{i(k+1)}^{(1)}|=|a_{i(k-1)}^{(1)}|\lesssim k-1$ for $0\leq i\leq k-1$ by the hypothesis assumption and $a_{k(k+1)}^{(1)}=2k+1\lesssim k+1$. Hence, $|a_{ij}^{(1)}|\lesssim j$ also holds for all $0\leq i\leq j-1$ with $j=k+1$.

For $l\geq2$. Again, by induction, assume that
$|a_{ij}^{(l-1)}|\lesssim j^{l-1}$ holds for all $0\leq i\leq j-(l-1)$ with $j=k$, and $|a_{ij}^{(l)}|\lesssim j^{l}$ holds for all $0\leq i\leq j-l$ with $j=k-1$.  Then
\begin{align}\label{eqn-A.5}\notag
   \sum_{i=0}^{k+1-l}a_{i(k+1)}^{(l)}L_i(x)=& L_{k+1}^{(l)}(x)
    =L_{k-1}^{(l)}(x)+(2k+1)L_k^{(l-1)}(x)\\\notag
=&\sum_{i=0}^{(k-1)-l}a_{i(k-1)}^{(l)}L_i(x)+(2k+1)\sum_{i=0}^{k-(l-1)}a_{ik}^{(l-1)}L_i(x)\\\notag
=&\sum_{i=0}^{(k-1)-l}\left(a_{i(k-1)}^{(l)}+(2k+1)a_{ik}^{(l-1)}\right)L_i(x)\\
    &+(2k+1)a_{(k-l)k}^{(l-1)}L_{k-l}(x)+(2k+1)a_{(k-l+1)k}^{(l-1)}L_{k-l+1}(x).
\end{align}
Comparing both sides of \eqref{eqn-A.5}, one has 
\begin{align*}
    |a_{i(k+1)}^{(l)}|=&|a_{i(k-1)}^{(l)}+(2k+1)a_{ik}^{(l-1)}|\lesssim (k-1)^l+(2k+1)k^{l-1},
\end{align*}
for all $0\leq i\leq k-l-1$, and
\begin{align*}
    |a_{(k-l)(k+1)}^{(l)}|=&|(2k+1)a_{(k-l)k}^{(l-1)}|\lesssim (2k+1)k^{l-1},\\
    |a_{(k-l+1)(k+1)}^{(l)}|=&|(2k+1)a_{(k-l+1)k}^{(l-1)}|\lesssim(2k+1)k^{l-1},
\end{align*}
which imply $|a_{i(k+1)}^{(l)}|\lesssim (k+1)^l$, for all $0\leq i\leq k-l+1$. Hence,
\begin{equation}\label{eqn-B.2}
    \sum_{i=0}^{j-1}\left(a_{ij}^{(l)}\right)^4
\lesssim j^{4l+1}.
\end{equation}
Therefore, by (7), $\displaystyle\tilde\gamma_j\overset{\eqref{eqn-B.1},\eqref{eqn-B.2}}\gtrsim\max\left\{\max_{0\leq l\leq \min(j,p)}j^{4l+1},j^4\right\}=j^{4p+1}$, for $j\geq p\geq1$.

\begin{example}[Hermite polynomials]
Let $q_j(x)=H_j(x)$ be the $j$-th Hermite polynomial with respect to $w(x)=e^{-x^2}$, so that
\begin{equation}
    \int_{\mathbb{R}}H_i(x)H_j(x)e^{-x^2}dx
=\delta_{ij}\gamma_j,
\end{equation}
where 
\begin{equation}\label{eqn-B.3}
    \gamma_j=2^j j!\sqrt{\pi}.
\end{equation}
In this case, $\tilde\gamma_j\gtrsim j^{4p}\gamma_j$, for $j\geq p\geq1$.
\end{example}

The Hermite polynomials satisfy the recurrence relation \citep{lebedevNN}
\begin{equation*}
H_j'(x)=2jH_{j-1}(x),
\end{equation*}
for $j\geq 1$, and
\begin{equation}
H_j^{(l)}(x)=2^l j(j-1)\cdots(j-l+1)H_{j-l}(x),
\end{equation}
for $0\leq l \leq j$. It is clear to see that
\[
    a_{(j-l)j}^{(l)}=2^l j(j-1)\cdots(j-l+1)\lesssim j^l,
\]
and $a_{ij}^{(l)}\equiv0$, for all  $0\leq i\leq j-l-1$. Hence, 
\begin{equation}\label{eqn-B.4}
    \sum_{i=0}^{j-l}(a_{ij}^{(l)})^4
\lesssim (j^{4l}).
\end{equation}
Therefore, 
\begin{equation*}
    \tilde\gamma_j\overset{\eqref{eqn-B.3},\eqref{eqn-B.4}}\gtrsim
\max\left\{\max_{0\leq l\leq\min(j,p)}j^{4l}2^jj!,j^42^jj!\right\}=j^{4p}2^jj!,
\end{equation*}
for $j\geq p\geq1$.

\section{Technical Details in the Proof of Theorem 4.1}

\subsection{Computation of $\pi_{<k_n}(B_{k_n})$}

Define $B_{k_n} = \left\{  \bbeta_n \Big| \sum \limits _{j=0}^{{k_n}-1} \left(\theta_{j,n} -\eta_{j,n} \right)^2\gamma_j <  \frac{1}{2}{a_n\varepsilon_n^2}\right\}$,
where $\bbeta_n=(\eta_{0,n},\eta_{1,n},\ldots)$:
\begin{align}\label{gau-betan}
  \eta_{0,n} &= 1+a_n-a_n\gamma_0, \notag\\
  \eta_{j,n} &\sim \N\!\left(0,\,(1-a_n\gamma_0)^2\,j^{-2p}\gamma_j^{-1}\right),
  \qquad \textup{independently for } 1\leq j\leq {k_n}-1, \notag\\
  \eta_{j,n} &= 0,\qquad j\geq {k_n}.
\end{align}
Then
\begin{align*}
\pi_{<{k_n}}(B_{k_n})
=& \int_{B_{k_n}}
\prod_{i=0}^{{k_n}-1}
\frac{i^p\gamma_i^{1/2}}{\sqrt{2\pi}\,(1-a_n\gamma_0)}
\exp\left\{
-\frac{\eta_{i,n}^2}
{2 i^{-2p}
\gamma_i^{-1} (1-a_n\gamma_0)^2}
\right\}
d\bbeta_{<k_n,n}
\\
\ge&
\int \limits _{\left\{  \bbeta_n \Big| \sum \limits _{j=0}^{{k_n}-1} \left(\theta_{j,n} -\eta_{j,n} \right)^2\gamma_j \le  \frac{1}{2}{a_n\varepsilon_n^2}\right\}}
\prod_{i=0}^{{k_n}-1}
\frac{i^p\,\gamma_i^{1/2}}{\sqrt{2\pi}}\\
&\phantom{\int \limits _{\left\{  \bbeta_n \Big| \sum \limits _{j=0}^{{k_n}-1} \left(\theta_{j,n} -\eta_{j,n} \right)^2\gamma_j \le  \frac{1}{2}{a_n\varepsilon_n^2}\right\}}}\cdot\exp\left\{
-\frac{\eta_{i,n}^2}
{2 i^{-2p}
\gamma_i^{-1} (1-a_n\gamma_0)^2}
\right\}
d\bbeta_{<k_n,n}.
\end{align*}
Let $w_{i,n} = \eta_{i,n} - \theta_{i,n},~i=0,\cdots,{k_n}-1$. Using the fact that $\frac{1}{2}\left
(w_{i,n}+\theta_{i,n}\right)^2 \le w_{i,n}^2+\theta_{i,n}^2$, we have
\begin{align*}
\pi_{<{k_n}}(B_{k_n})
&
\ge
\int \limits _{\left\{  \mathbf w_n \Big| \sum \limits _{j=0}^{{k_n}-1} w_{j,n}^2 \gamma_j \le  \frac{1}{2}{a_n\varepsilon_n^2}\right\}}
\prod_{i=0}^{{k_n}-1}
\frac{i^p\,\gamma_i^{1/2}}{\sqrt{2\pi}}
\exp\left\{
-\frac{(w_{i,n}+\theta_{i,n})^2}
{2 i^{-2p}
\gamma_i^{-1} (1-a_n\gamma_0)^2}
\right\}
\, d \mathbf{w}_{<k_n,n}
\\
&\ge
\prod_{i=0}^{{k_n}-1}
\exp\left\{
-\frac{\theta_{i,n}^2}
{ i^{-2p}
\gamma_i^{-1} (1-a_n\gamma_0)^2}
\right\}
\\
&\phantom{aaaa}\cdot
\int \limits _{\left\{  \mathbf w_n \Big| \sum \limits _{j=0}^{{k_n}-1} w_{j,n}^2 \gamma_j \le  \frac{1}{2}{a_n\varepsilon_n^2}\right\}}
\prod_{i=0}^{{k_n}-1}
\frac{i^p\,\gamma_i^{1/2}}{\sqrt{2\pi}}
\exp\left\{
-\frac{w_{i,n}^2}
{ i^{-2p}
\gamma_i^{-1} (1-a_n\gamma_0)^2}
\right\}
\,d \mathbf{w}_{<k_n,n}.
\end{align*}

Moreover,
since $\theta_n\in \Omega_n^{\G_{a_n}}$ (20), it follows that $\sum \limits _{i=0}^{k_n}\theta_{i,n}^2\tilde\gamma_i < n^{\frac1{2p+1}}$. Then 
\begin{align*}
\prod_{i=0}^{{k_n}-1}
\exp\left\{
-\frac{\theta_{i,n}^2}
{ i^{-2p}
\gamma_i^{-1} (1-a_n\gamma_0)^2}
\right\}
&=
\exp\left\{
- \sum_{i=0}^{{k_n}-1}
\frac{\theta_{i,n}^2}
{ i^{-2p}
\gamma_i^{-1} (1-a_n\gamma_0)^2}
\right\}
\\
&\ge
\exp\left\{
- 
\frac{1}{(1-a_n\gamma_0)^2}
\sum_{i=0}^{{k_n}-1}\theta_{i,n}^2 \tilde \gamma_i
\right\}\\
&\gtrsim \exp{\left\{ -\frac{1}{(1-a_n\gamma_0)^2}
n^{\frac{1}{2p+1}}
\right\}}.
\end{align*}
Let $u_{i,n}:=\sqrt{\gamma_i}w_{i,n},~i=0,\cdots,{k_n}-1$, so that 
\begin{align*}
\pi_{<{k_n}}(B_{k_n})
\gtrsim&
\exp{\left\{ -\frac{n^{{1}/{(2p+1)}}}{(1-a_n\gamma_0)^2}
\right\}}\\
&\phantom{a}\cdot\int _{\left\{  \mathbf w_n \Big| \sum \limits _{j=0}^{{k_n}-1} w_{j,n}^2 \gamma_j \le  \frac{1}{2}{a_n\varepsilon_n^2}\right\}}
\prod_{i=0}^{{k_n}-1}
\frac{i^p\gamma_i^{1/2}}{\sqrt{2\pi}}
\exp\left\{
-\frac{w_{i,n}^2}
{ i^{-2p}
\gamma_i^{-1} (1-a_n\gamma_0)^2}
\right\}
d \mathbf{w}_{<k_n,n}\\
=&\left(2\pi\right)^{\frac{{k_n}}{2}}
\left[ ({k_n}-1)!\right]^p
\exp{\left\{ -\frac{n^{{1}/{(2p+1)}}}{(1-a_n\gamma_0)^2}
\right\}}\\
&\phantom{a}\cdot\int _{\left\{  \mathbf u_n \Big| \sum \limits _{j=0}^{{k_n}-1} u_{j,n}^2 \le  \frac{1}{2}{a_n\varepsilon_n^2}\right\}}
\prod_{i=0}^{{k_n}-1}
\exp\left\{
-\frac{u_{i,n}^2}
{ i^{-2p}
(1-a_n\gamma_0)^2}
\right\}d\mathbf{u}_{<k_n,n}\\
:=&\left(2\pi\right)^{\frac{{k_n}}{2}}
\left[ ({k_n}-1)!\right]^p
\exp{\left\{ -\frac{n^{{1}/{(2p+1)}}}{(1-a_n\gamma_0)^2}
\right\}}I.
\end{align*}

The integral $I$ can be evaluated by applying Lemma 3 of \cite{shen2001}, which states that for any $r>0$ and any integrable function $f$,
\begin{align}\label{eqn-1}
\int_{\{\sum_{i=1}^{k_n} x_i^2 \le r^2\}}
f\!\left(\sqrt{\sum_{i=1}^{k_n} x_i^2}\right)
\, dx_1\cdots dx_{k_n}
=
\frac{\pi^{{k_n}/2}}{\Gamma({k_n}/2)}\,
r^{{k_n}}
\int_0^1
u^{{k_n}/2-1}
f(r\sqrt{u})\,du.
\end{align}
With
$f(x)=\exp\left\{-\frac{{k_n}^{2p}x^2}{(1-a_n\gamma_0)^2}\right\}$
and $r_n=\sqrt{\frac{1}{2}a_n\varepsilon_n^2}$ in \eqref{eqn-1}, we obtain
\begin{align*}
    I&\gtrsim
\int  _{\left\{  \mathbf u_n \Big| \sum \limits _{j=0}^{{k_n}-1} u_{j,n}^2 \le  \frac{1}{2}{a_n\varepsilon_n^2}\right\}}
\prod_{i=0}^{{k_n}-1}
\exp\left\{
-\frac{u_{i,n}^2}
{ {k_n}^{-2p}
(1-a_n\gamma_0)^2}
\right\}
\, d\mathbf{u}_{<k_n,n}\\
&=
\frac{\pi^{{k_n}/2}}{\Gamma({k_n}/2)}\, r_n^{{k_n}}
\int_0^1
u^{{k_n}/2-1}
\exp\left\{
-\frac{r_n^2 {k_n}^{2p} u}{(1-a_n\gamma_0)^2}
\right\}
\,du.
\end{align*}
For short of notations, let $v_n
=
\frac{a_n \varepsilon_n^2 {k_n}^{2p}}
{2(1-a_n \gamma_0)^2}u,
~
\Delta_n
=
\frac{a_n \varepsilon_n^2 {k_n}^{2p}}
{2(1-a_n \gamma_0)^2}$, then
\begin{align*}
    &\int_0^1
u^{\frac{{k_n}}{2}-1}
\exp\!\left\{
-\frac{a_n \varepsilon_n^2 {k_n}^{2p}}{2(1-a_n \gamma_0)^2} u
\right\}
du
    =\int_0^{\Delta_n}
\left( \frac{v_n}{\Delta_n} \right)^{\frac{{k_n}}{2}-1}
e^{-v_n}
\frac{1}{\Delta_n}
dv_n\\
    =&
\Delta_n^{-\frac{{k_n}}{2}}
\int_0^{\Delta_n}
v_n^{\frac{{k_n}}{2}-1} e^{-v_n}
dv_n
    \ge
\Delta_n^{-\frac{{k_n}}{2}}
e^{-\Delta_n}
\int_0^{\Delta_n}
v_n^{\frac{{k_n}}{2}-1}
dv_n
    =
\Delta_n^{-\frac{{k_n}}{2}}
e^{-\Delta_n}
\cdot
\frac{2}{{k_n}}\,
\Delta_n^{{k_n}/2}\\
    =&\frac{2}{{k_n}}\,
e^{-\Delta_n}.
\end{align*}
Note that  $a_n = \varepsilon_n^4=n^{-\frac{4p}{2p+1}}$ and ${k_n}=\O\left(n^{\frac{6p+1}{7p(2p+1)}}\right)$, then
\[
a_n \varepsilon_n^2 {k_n}^{2p}
=
\O\left(
n^{-\frac{6p(4p-1)}{(2p+1)(6p+1)}}
\right)
\to 0,
\qquad
a_n \gamma_0 \to 0.
\]
Hence,
\[
\exp{\{-\Delta_n\}}=\exp\!\left\{
-\frac{a_n \varepsilon_n^2 {k_n}^{2p}}
{2(1-a_n \gamma_0)^2}
\right\}
\gtrsim 1.
\]
Substituting this bound yields
\begin{align}\label{eqn-2}\notag
\pi_{<{k_n}}(B_{k_n})
&\gtrsim
\left(\frac12\right)^{k_n}
\left[({k_n}-1)!\right]^p
(a_n\varepsilon_n^2)^{{k_n}/2}
\frac{1}{\Gamma({k_n}/2)}
\frac{1}{{k_n}}
\exp{\left\{ -\frac{n^{{1}/{(2p+1)}}}{(1-a_n\gamma_0)^2}
\right\}}\\\notag
    &\gtrsim
\exp{\left\{ -\frac{n^{{1}/{(2p+1)}}}{(1-a_n\gamma_0)^2}
\right\}}
(a_n\varepsilon_n^2)^{\frac{{k_n}}{2}}
\left(\frac12\right)^{k_n}
\left(\frac{{k_n}}{2}\right)^{-\frac{{k_n}}{2}+\frac12}\\\notag
&
\qquad \cdot ~
e^{{k_n}/2}
({k_n}-1)^{p({k_n}-1)}
e^{-p({k_n}-1)}
\frac{1}{{k_n}}
\\
&\gtrsim
\exp{\left\{ -\frac{n^{{1}/{(2p+1)}}}{(1-a_n\gamma_0)^2}
\right\}}
(a_n\varepsilon_n^2)^{\frac{{k_n}}{2}}
\left(\frac12\right)^{\frac{3{k_n}}{2}}
{k_n}^{-\frac{{k_n}}{2}}
e^{-p{k_n}},
\end{align}
by 
\[
[({k_n}-1)!]^p
\ge
({k_n}-1)^{p({k_n}-1)}
\exp{\left\{-p({k_n}-1)\right\}},
\]
and Stirling’s approximation
\[
\Gamma\!\left(\frac{{k_n}}{2}\right)
=
\O\left(
\sqrt{2\pi}
\left(\frac{{k_n}}{2}\right)^{\frac{{k_n}}{2}-1}
\exp{\left\{-\frac{{k_n}}{2}\right\}}
\right).
\]

To estimate the lower bound of the right-hand side of \eqref{eqn-2} in the exponential form, we take the logarithm on both sides:
\begin{align*}
\log \pi_{<{k_n}}(B_{k_n})
&\gtrsim
-n^{\frac{1}{2p+1}}
+\frac{{k_n}}{2}\ln (a_n\varepsilon_n^2)
-\frac{3}{2}{k_n}\ln 2 
-\frac{{k_n}}{2}\ln {k_n}
- p{k_n} \\
&=-\left(
n^{\frac{1}{2p+1}}+
\frac{6p+1}{2}\,{k_n}\ln {k_n}
+\frac{3}{2}(\ln 2 + p)\,{k_n}\right)\\
&\gtrsim
-2n^{\frac{1}{2p+1}}\\
&=-2 a_n b_n \varepsilon_n^2,
\end{align*}
if we choose  $a_n = \varepsilon_n^4=n^{-\frac{4p}{2p+1}}$ and $b_n = \O\left(n^{1+\frac{4p}{2p+1}}\right)$.

\subsection{Computation of $\pi(\Omega_n^c)$}
Recall that $\Omega_n
:=\left\{\bbeta\in\Omega^{\G} \,\Bigg|\, \sum_{j=0}^{\infty}\eta_j^2\tilde\gamma_j
\le n^{\frac{1}{2p+1}}\right\}$.
Under the truncated Gaussian sieve $\pi_{<i}(\bbeta_{<i})=\prod_{j=1}^{i-1}\pi(\eta_j)$ of $\bbeta_{<i}:=(\eta_0,\cdots,\eta_{i-1})$, i.e.
\begin{align*}
  \eta_0 &= 1, \notag\\
  \eta_j &\sim \N\!\left(0,\,j^{-2p}\gamma_j^{-1}\right),
  \qquad \textup{for } 1\leq j\leq i-1, \notag\\
  \eta_j &= 0,\qquad j\geq i,
\end{align*}
for $i=2 \leq k_n$,
we have
\begin{align}\label{pinc}\notag
\pi(\Omega_n^c)
&= 1-\pi(\Omega_n)
\;\lesssim\;
1-\pi_{<2}(\Omega_n)  \\
&= 1-
\int_{\left\{
\eta_{1,n}\,\middle|\,
\sum\limits _{i=0}^1 \tilde\gamma_i \eta_{i,n}^2
\le n^{\frac{1}{2p+1}}
\right\}}
\frac{1}{\sqrt{2\pi}\,\gamma_1^{-1/2}}
\exp\!\left(
-\frac{\eta_{1,n}^2}
{2\gamma_1^{-1}}
\right)
\,d\eta_{1,n}.
\end{align}
Let us examine the integral domain in \eqref{pinc}. Since $\tilde\gamma_0=\gamma_0$ and $\eta_{0,n}=1$, then
\begin{align}\label{eqn-4}
    \left\{\eta_{1,n}\big|
    \tilde\gamma_0\eta_{0,n}^2
+\tilde\gamma_1\eta_{1,n}^2
\le n^{\frac{1}{2p+1}}
    \right\}
    \subset
    \left\{\eta_{1,n}\Big|
    \eta_{1,n}^2 \le 
    \frac{n^{1/(2p+1)-\gamma_0}}{\tilde \gamma_1}
    \right\}
    =
    \left\{\eta_{1,n}\Big|
    \left(\frac{\eta_{1,n}}{\sqrt{\gamma_1}}\right)^2 \le v^2
    \right\},
\end{align}
where $v:=\sqrt{\frac{n^{1/(2p+1)-\gamma_0}}{\gamma_1^{-1}\tilde \gamma_1}}=\O(n^{\frac{1}{2(2p+1)}})$.
Substituting \eqref{eqn-4} back to \eqref{pinc}, one has
\begin{align*}
        \pi(\Omega_n^c)& \lesssim1-\int_{-v}^v \frac{1}{\sqrt{2\pi}}\exp{\left\{-\frac{v^2}{2}\right\}}dv
        =1-2\left(\Phi(v)-1\right)\\
        &\lesssim 1- \Phi(v) 
        \lesssim
        \frac{2\exp{\left\{-v^2/2\right\}}}{v}
        \lesssim
        \frac{2\exp{\left\{-C_6n^{1/(2p+1)}\right\}}}{n}
        \lesssim 
         {\exp{\left\{-C_6n^{1/(2p+1)}\right\}}},
\end{align*}
where $\Phi(\cdot)$ is the distribution function of standard normal distribution. 
The third-to-last inequality follows from standard normal tail bound
\[
1-\Phi(x)
\le
\frac{2\exp(-x^2/2)}{x},
\qquad x>1.
\]

\section{Numerical experiments on unbounded interval}\label{ex2}

\begin{figure}[t]
  \centering
  \includegraphics[width=0.55\linewidth]{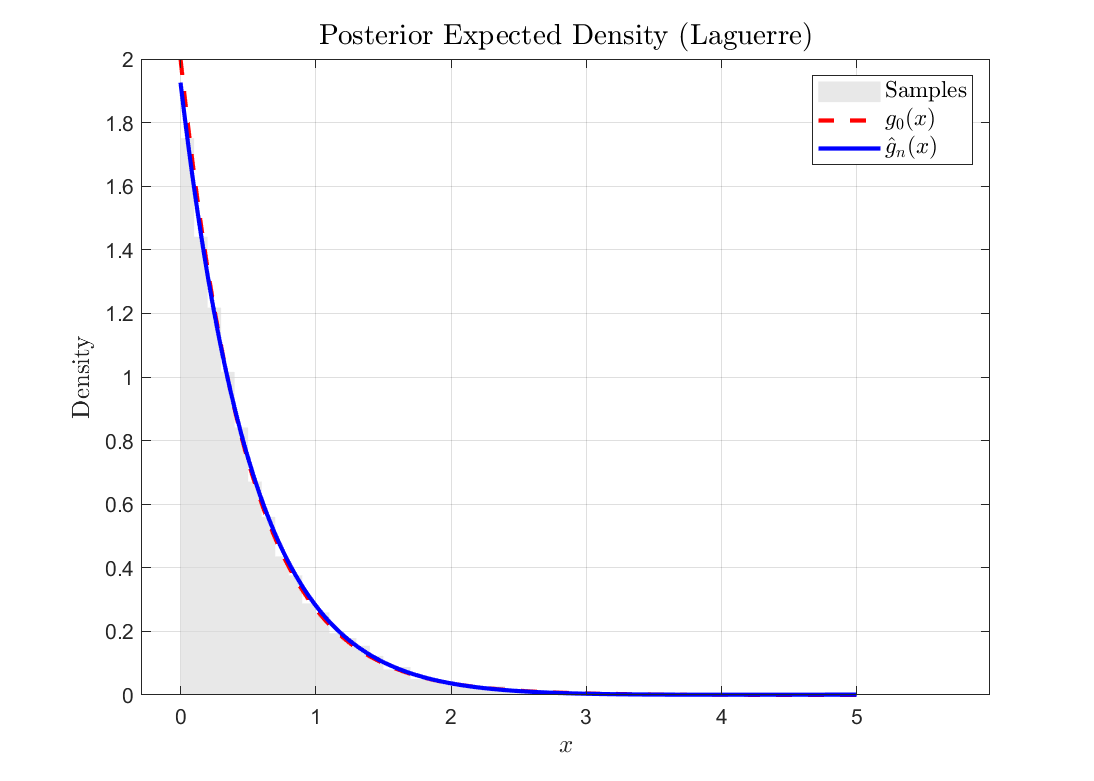}
  \caption{True density $g_0(\cdot)w(\cdot)$ (red dashed) and Bayesian estimator (blue) with $n=10{,}000$.}
  \label{fig-laguerre}
\end{figure}
\begin{figure}[t]
  \centering
  \includegraphics[width=0.55\linewidth]{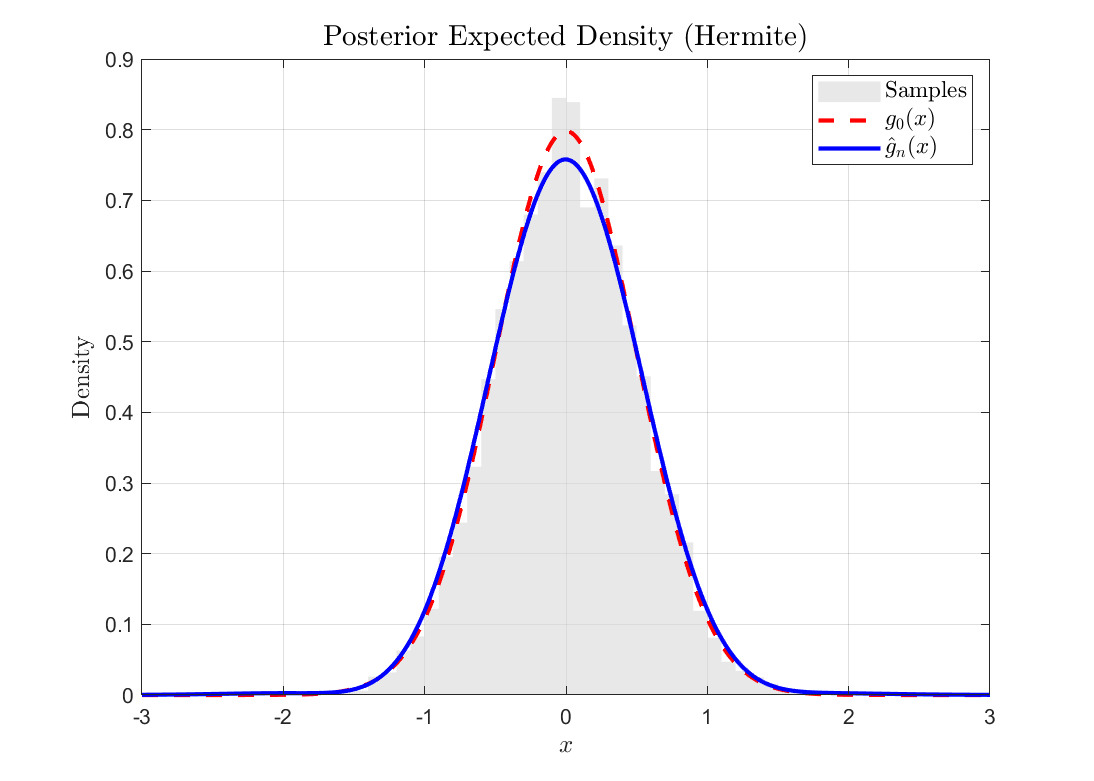}
  \caption{True density $g_0(\cdot)w(\cdot)$ (red dashed) and Bayesian estimator (blue) with $n=10{,}000$.}
  \label{fig-hermite}
\end{figure}

In this subsection, we shall consider two densities on the unbounded intervals $\R^+$ or $\R$, both of which have no strictly positive lower bounds.

\subsection{Exponential density on $\R^+$}\label{sec-D.1}
Let us consider the exponential density on $\R^+$ under the weight $w(x)=e^{-x}$:
\begin{equation}\label{truepdf-exp}
  g_0(x)=2e^{-x},
\end{equation}
that is, $\displaystyle\int_{\R^+}g_0(x)w(x)dx=\int_{\R^+}2e^{-2x}dx=1$. We shall use the first $10$ Laguerre polynomials $\{q_j(x)\}_{j=0}^{9}$ and the same truncated Gaussian sieve prior on $\bbeta$ as in (79) is imposed, i.e.
\begin{align}\label{eqn-5.8}
  \eta_j &\sim \N(0,\sigma_j^2), \qquad j=0,1,\ldots,9, \notag\\
  \eta_j &=0, \qquad j\geq 10.
\end{align}
As in Section 5.1, $n=10{,}000$ samples are drawn from $g_0(x)$, and
the Bayesian estimator is obtained by drawing Markov chain Monte Carlo (MCMC) samples from the posterior distribution induced by the prior \eqref{eqn-5.8}.
A random walk Metropolis algorithm is employed, generating $10000$ iterations where the first $2000$ are treated as burn-in.
The standard deviations are chosen as
\begin{equation}\label{eqn-5.9}
  \{\sigma_j\}_{j=0}^{9}
  =
  [1; 1; 0.25; 0.11; 0.06; 0.04; 0.03; 0.02; 0.02; 0.01].
\end{equation}

Figure \ref{fig-laguerre} shows that the estimator accurately captures the exponential shape on $\R^+$.

\subsection{Gaussian density on $\R$}\label{ex3}

Similar as in Section \ref{sec-D.1},  a Gaussian density under $w(x)=e^{-x^2}$
\begin{equation}\label{truepdf-gauss}
  g_0(x)=\sqrt{\frac{2}{\pi}}e^{-x^2},
\end{equation}
for $x\in\R$, is taken into consideration. The first $10$ Hermite polynomials are in use and Gaussian sieve prior on $\bbeta$ are imposed, with the  the standard deviations
\begin{equation}\label{eqn-5.11}
  \{\sigma_j\}_{j=0}^{9}
  =
  0.8\cdot[1; 0.53; 0.067; 0.012; 0.002; 0.001; 0; 0; 0; 0].
\end{equation}
Again, $n=10{,}000$ and the same MCMC algorithm as before, Figure \ref{fig-hermite} displays the estimator of the Gaussian \eqref{truepdf-gauss}.

\bibliographystyle{ba}
\bibliography{reference}